\documentclass[reqno]{amsart}

\usepackage[utf8]{inputenc}
\usepackage{amssymb, amsmath, amsthm, color, enumerate, mathabx, mathrsfs}
\usepackage{bbm}
\usepackage{pdfpages}
\usepackage{tcolorbox}
\usepackage{esint} 
\usepackage{mathtools}
\usepackage{subcaption}
\usepackage{marginnote}
\usepackage{chngcntr}

\usepackage{multicol}
\usepackage{wrapfig}

\counterwithin{figure}{section}

\numberwithin{equation}{section}
\theoremstyle{plain}
\newtheorem{theorem}{Theorem}[section]
\newtheorem{lemma}[theorem]{Lemma}
\newtheorem{proposition}[theorem]{Proposition}
\newtheorem{corollary}[theorem]{Corollary}

\newtheorem*{claim}{Claim}
\theoremstyle{definition}
\newtheorem{definition}[theorem]{Definition}
\newtheorem{example}[theorem]{Example}
\newtheorem{remark}[theorem]{Remark}

\newcommand{\bbA}{\mathbb{A}}
\newcommand{\bbB}{\mathbb{B}}
\newcommand{\C}{\mathbb{C}}

\newcommand{\N}{\mathbb{N}}

\newcommand{\R}{\mathbb{R}}

\newcommand{\Z}{\mathbb{Z}}

\newcommand{\cA}{\mathcal{A}}
\newcommand{\cB}{\mathcal{B}}
\newcommand{\cC}{\mathcal{C}}

\newcommand{\cM}{\mathcal{M}}

\newcommand{\cQ}{\mathcal{Q}}

\newcommand{\bdnu}{\boldsymbol{\nu}}

\newcommand{\ud}{\mathrm{d}}
\def\inn#1#2{\langle#1,#2\rangle}
\newcommand{\floor}[1]{\lfloor #1 \rfloor }
\newcommand{\ceil}[1]{\lceil #1 \rceil }
\newcommand{\supp}{\mathrm{supp}\,}

\newcommand{\dist}{\mathrm{dist}}

\newcommand{\sgn}{\mathrm{sgn}}

\newcommand{\norm}[1]{\|#1\|}
\newcommand{\Norm}[1]{\bigg\|#1\bigg\|}
\newcommand{\normLpRd}[3]{\norm{#1}_{L^{#2}(\R^{#3})}}
\newcommand{\NormLpRd}[3]{\Norm{#1}_{L^{#2}(\R^{#3})}}

\newcommand{\Abs}[1]{\biggl| #1 \biggr|}

\newcommand{\Brac}[1]{\biggl( #1 \biggr)}

\newcommand{\Chi}{\Tilde{\chi}}
\newcommand{\tang}[2]{\Delta(#1, #2)}
\newcommand{\ft}{\; \widehat{} \; }

\let\angle\measuredangle

\counterwithin{figure}{section} 

\begin{document}

\title[The circular maximal theorem]{Bourgain's proof of the circular maximal theorem: an exposition}

\author[H. Y. G\"und\"uz]{ Hal\.{i}t Y\.{i}\u{g}\.{i}t G\"und\"uz }
\address{Hal\.{\.i}t Y\.{\.i}\u{g}\.{\.i}t G\"und\"uz: School of Mathematics, The Watson Building, University of Birmingham, Edgbaston, Birmingham, B15 2TT, England.}
\email{hyg637@student.bham.ac.uk}

\author[J. Hickman]{ Jonathan Hickman }
\address{Jonathan Hickman: School of Mathematics and Maxwell Institute for Mathematical Sciences, James Clerk Maxwell Building, The King's Buildings, Peter Guthrie Tait Road, Edinburgh, EH9 3FD, UK.}
\email{jonathan.hickman@ed.ac.uk}

\begin{abstract} We present an exposition of Bourgain's celebrated 1986 proof of the circular maximal theorem. 
    
\end{abstract}

\maketitle




\section{Introduction} 




\subsection{The spherical and circular maximal theorems} For $d \geq 2$, let $\sigma$ denote the surface measure on the unit sphere $S^{d-1}$ in $\R^d$, normalised to have mass $1$. Given $f \in C_c(\R^d)$, define the \textit{spherical means}
\begin{equation}\label{eq: spherical mean}
    A f(x, r) \coloneq  f*\sigma_r(x) = \int_{S^{d-1}} f(x - r \omega)\,\ud \sigma(\omega), \qquad (x,r) \in \R^d \times (0, \infty),
\end{equation}
so that $Af(x,r)$ corresponds to the average of $f$ over the sphere 
\begin{equation*}
    C(x, r) \coloneq \{y \in \R^d : |x - y| = r \}. 
\end{equation*}
Here $\sigma_r$ denotes the dilated measure, defined by the action $\inn{\sigma_r}{f} \coloneq \inn{\sigma}{f(r \; \cdot \,)}$ for $f \in C_c(\R^d)$ and $r > 0$. Finally, we define the associated \textit{spherical} (or, in the case $d = 2$, the \textit{circular}) \textit{maximal function}
\begin{equation}\label{eq: spherical max}
    M f(x) \coloneq \sup_{r > 0} |A f(x, r)|. 
\end{equation}

The operator $M$ is a singular variant of the classical Hardy--Littlewood maximal function. Accordingly, the following foundational theorem of Stein~\cite{Stein1976} establishes a singular variant of the Hardy--Littlewood maximal theorem. 

\begin{theorem}[Spherical maximal theorem: Stein, 1976 \cite{Stein1976}]\label{thm: Stein} For all $d \geq 3$ and $p > \frac{d}{d-1}$, we have\footnote{Given a list of objects $L$ and non-negative real numbers $A$, $B$, we write $A \lesssim_L B$, $B \gtrsim_L A$ or $A = O_L(B)$ if $A \leq C_L B$ where $C_L > 0$ is a constant depending only on the objects listed in $L$ and possibly a choice of dimension $d$.}
\begin{equation}\label{eq: Stein thm}
    \|M f\|_{L^p(\R^d)} \lesssim_p \|f\|_{L^p(\R^d)} \qquad \textrm{for all $f \in C_c(\R^d)$.} 
\end{equation}
\end{theorem}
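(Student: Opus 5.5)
To prove \eqref{eq: Stein thm} we decompose $\sigma$ in frequency and exploit the Fourier decay $|\hat\sigma(\xi)| \lesssim (1+|\xi|)^{-(d-1)/2}$, which follows from stationary phase (or the Bessel function representation $\hat\sigma(\xi) = c_d |\xi|^{-(d-2)/2} J_{(d-2)/2}(2\pi|\xi|)$). The same bound holds for the gradient, $|\nabla\hat\sigma(\xi)| \lesssim (1+|\xi|)^{-(d-1)/2}$. Fix a Littlewood--Paley decomposition $1 = \sum_{j \geq 0} \psi_j$ with $\psi_0$ supported in $|\xi|\le 2$ and $\psi_j$ supported in $|\xi| \sim 2^j$ for $j \geq 1$, and write $\sigma = \sum_j \sigma^j$ with $\widehat{\sigma^j} = \psi_j \hat\sigma$. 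Setting $M_j f := \sup_{r>0}|f * \sigma^j_r|$, we have $Mf \le \sum_j M_j f$. It therefore suffices to prove $\|M_j f\|_p \lesssim 2^{-\epsilon(p) j}\|f\|_p$ for some $\epsilon(p)>0$ whenever $p > d/(d-1)$, and then sum the geometric series.

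The first ingredient is an $L^\infty$ (and hence $L^p$) bound. The kernel $\sigma^j$ is $\sigma$ smoothed at scale $2^{-j}$, so $|\sigma^j(x)| \lesssim_N 2^{j}(1+2^j\,\mathrm{dist}(x,S^{d-1}))^{-N}$. Consequently $|\sigma^j| \lesssim 2^j$ times an integrable radially decreasing majorant, which gives
\begin{equation*}
M_j f \lesssim 2^{j} \mathcal{M}_{HL} f.
\end{equation*}
This yields $\|M_j\|_{L^p\to L^p} \lesssim 2^j$ for $1<p\le\infty$, and in particular a weak $(1,1)$ bound with constant $2^j$.

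The second ingredient is an $L^2$ bound, and this is the main step. We restrict the supremum to $r\in[1,2]$ first; the full range $r>0$ is recovered by a square function argument, using $\sup_k \le (\sum_k |\cdot|^2)^{1/2}$ over dyadic scales together with orthogonality in $L^2$, since the pieces $\sigma^j_r$ at scale $r\sim 2^k$ localise frequency to $|\xi|\sim 2^{j-k}$. For $r\in[1,2]$ we use the Sobolev embedding in $r$:
\begin{equation*}
\sup_r |F(r)|^2 \le |F(1)|^2 + 2\Bigl(\int_1^2|F|^2\Bigr)^{1/2}\Bigl(\int_1^2|\partial_rF|^2\Bigr)^{1/2}.
\end{equation*}
Plancherel gives $\|F\|_{L^2_{x,r}} \lesssim 2^{-j(d-1)/2}\|f\|_2$. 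Since $\partial_r$ costs a factor $2^j$, the derivative satisfies $\|\partial_r F\|_{L^2_{x,r}} \lesssim 2^{j}2^{-j(d-1)/2}\|f\|_2$. Combining, $\|M_j f\|_2 \lesssim 2^{-j(d-2)/2}\|f\|_2$, which decays precisely because $d\ge3$. Handling the square function over all dyadic scales, and the bookkeeping of uniformity in $j$ there, is where I expect the most care to be needed; the cleanest route is to apply the Sobolev argument directly with $r\in(0,\infty)$ and the measure $\ud r/r$, which makes the Plancherel computation scale invariant.

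Finally, we interpolate between the $L^2$ bound $2^{-j(d-2)/2}$ and the weak $L^1$ (or $L^{1+}$) bound $2^{j}$, using Marcinkiewicz for the sublinear operators $M_j$. For $1<p<2$ with $1/p = (1-\theta)+\theta/2$, this gives
\begin{equation*}
\|M_j\|_{p\to p} \lesssim 2^{j[(1-\theta) - \theta(d-2)/2]},
\end{equation*}
and the exponent is negative iff $\theta > 2/d$, i.e. iff $p > d/(d-1)$. For $p\ge2$ we interpolate the $L^2$ bound with the trivial $L^\infty$ bound on $M$ itself. Summing over $j$ then completes the proof.
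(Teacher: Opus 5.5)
Your proof is correct, and it is essentially the standard textbook argument rather than the route the paper takes.

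For the key $L^2$ estimate you work with the operator directly. A Sobolev embedding in $r$, Plancherel, and the decay of $\hat\sigma$ and $\nabla\hat\sigma$ give the bound $2^{-j(d-1)/2}\cdot 2^{j/2}$. The paper instead discretises and dualises (Lemma~\ref{lem: duality}) and proves the dual bound, Proposition~\ref{prop:steinlpdualp2}, in two steps. First it splits the radii into $O(\delta^{-1})$ intervals of length $\delta/2$; the resulting Cauchy--Schwarz loss $\delta^{-1/2}$ is the counterpart of your $2^{j/2}$ from $\partial_r$. Then it uses the stationary-phase weak orthogonality Lemma~\ref{lem:weakorthogonality} to show that kernels with nearly equal radii and $\delta$-separated centres are almost orthogonal; this is the counterpart of your fixed-$r$ Plancherel bound $2^{-j(d-1)/2}$.

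At the $L^1$ end, you use the pointwise bound $M_jf\lesssim 2^jM_{\mathrm{HL}}f$ for the \emph{global} operator and interpolate the sublinear $M_j$ by Marcinkiewicz. The paper instead uses the trivial dual bound $\#\cC\lesssim\delta^{-d}$ for the \emph{local} operator. Consequently the paper only proves the local theorem (Proposition~\ref{prop: loc Stein thm}) for $d/(d-1)<p<2$, because its globalisation Lemma~\ref{lem: loc to glob} requires $p\geq 2$. It defers the full global range to a ``minor modification'', and your direct interpolation of the global $M_j$ supplies exactly that, with the $L^2$ bound globalised via $\ud r/r$ or the $L^2$ square function.

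Your route is shorter and gives the whole theorem. The paper chooses its formalism because it expresses the $L^2$ interactions pairwise in terms of the tangency parameter $\tang{C_1}{C_2}$. Bourgain's argument exploits this in $d=2$, splitting $\cC$ into tangent and transverse subfamilies to gain $\delta^{\varepsilon}$. A Fourier multiplier/Sobolev argument only returns the borderline exponent $\varepsilon_2(2)=0$ there and offers no handle for improvement.

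A minor point: the weak $(1,1)$ bound follows from the domination by $M_{\mathrm{HL}}$, not ``in particular'' from the $L^p$ bounds for $p>1$.
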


Stein's elegant argument \cite{Stein1976} relies heavily on Fourier analysis. This is somewhat surprising, given that the operator $M$ is positive and there is no overtly oscillatory feature of the problem. Another interesting aspect of Theorem~\ref{thm: Stein} is that the arguments used in \cite{Stein1976} fail to establish any nontrivial $L^p$ bound in the $d = 2$ case. Indeed, determining the $L^p$ boundedness of the \textit{circular} maximal function is a much more difficult problem, which was eventually settled in a celebrated work of Bourgain~\cite{Bourgain1986} some ten years later.

\begin{theorem}[Circular maximal theorem: Bourgain, 1986 \cite{Bourgain1986}]\label{thm: Bourgain} For $p > 2$, we~have
\begin{equation*}
    \|M f\|_{L^p(\R^2)} \lesssim_p \|f\|_{L^p(\R^2)} \qquad \textrm{for all $f \in C_c(\R^2)$.}  
\end{equation*}
\end{theorem}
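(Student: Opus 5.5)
We will follow Bourgain's original strategy: reduce to a local maximal function over $r \in [1,2]$, decompose $f$ dyadically in frequency, and prove for each frequency piece an $L^p$ bound with a small power gain $2^{-\epsilon(p) k}$, which can then be summed. First, by a standard Littlewood--Paley argument, the full supremum over $r>0$ is controlled by the local maximal operator $M_{\mathrm{loc}} f(x) := \sup_{1 \leq r \leq 2} |Af(x,r)|$. The low-frequency pieces are handled by square functions and the Hardy--Littlewood maximal function. The resulting losses are only of logarithmic type, or can be absorbed using the decay of $\hat\sigma$, so it suffices to treat $M_{\mathrm{loc}}$.

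Write $f = \sum_k f_k$ with $\hat f_k$ supported where $|\xi| \sim 2^k$, and let $M_k$ denote the local maximal operator applied with $\sigma$ replaced by $\sigma * \psi_{2^{-k}}$. This is essentially an average over $2^{-k}$-neighbourhoods of circles, and it is easily bounded on $L^\infty$ and, by Sobolev embedding in $r$ together with the decay $|\hat\sigma(\xi)| \lesssim |\xi|^{-1/2}$, on $L^2$ with norm $O(1)$. The endpoint $L^2$ estimate is sharp only up to this $O(1)$, so interpolation alone gives nothing below $p = \infty$. The key step is therefore a restricted weak type estimate at some $p_0 < \infty$ with decay, or more precisely a bound of the form
\[
\|M_k f\|_{L^{p}} \lesssim 2^{-\epsilon k}\|f\|_{L^p}
\]
for some $2 < p < \infty$. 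Interpolating this with the trivial $L^\infty$ bound and with the $O(1)$ (or $O(k)$) bound near $L^2$ then yields geometric decay in $k$ for every $p > 2$. As an alternative route to such a bound, one can test against characteristic functions: it suffices to show that the $2^{-k}$-neighbourhoods of circles centred on a set $E$, with radii chosen so that each neighbourhood meets $F$ in a proportion $\lambda$, satisfy a Kakeya-type inequality
\[
|E| \lesssim 2^{-\epsilon k} \lambda^{-C} |F|^{c}.
\]

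The main obstacle is this geometric combinatorial estimate for annuli. It will be proved by counting incidences between $\delta$-annuli, with $\delta = 2^{-k}$, via a Marstrand/Córdoba-style $L^2$ (bush) argument. We first estimate $|A_1 \cap A_2|$ for two annuli in terms of $\delta$, the distance $t$ between their centres and the tangency parameter $\Delta = \bigl||x_1-x_2| - |r_1-r_2|\bigr|$; the intersection has area roughly $\delta^2/\sqrt{(\delta+t)(\delta+\Delta)}$. Then, for a fixed annulus, we bound how many others can be nearly internally tangent to it while each meets $F$ substantially. The circles with small $\Delta$ relative to a given one form a two-parameter family, and Bourgain's key lemma controls how often three circles can be mutually nearly tangent at separated points. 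A dyadic pigeonholing in $t$ and $\Delta$, applied to the quantity $\sum_{i,j}|A_i\cap A_j \cap F|$, then gives the needed $\delta^{\epsilon}$ gain. We expect the delicate part to be the case of high multiplicity tangencies, where we would first try a two-ends/pigeonholing reduction to circles whose intersections with $F$ are spread out, so that tangency clusters are forced to cost a power of $\delta$.

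Finally, we sum the pieces: $\|M_{\mathrm{loc}} f\|_p \le \sum_k \|M_k f\|_p \lesssim \sum_k 2^{-\epsilon(p)k}\|f\|_p$ for every $p>2$. Combining this with the reduction in the first step proves the theorem.
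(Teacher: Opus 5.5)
Your outer reductions are fine once per-frequency decay is available. Note that the passage from local to global, as in Lemma~\ref{lem: loc to glob}, needs that decay and tolerates no logarithmic losses. Given that, proving $\|M_k f\|_{L^{p_0}}\lesssim 2^{-\epsilon k}\|f\|_{L^{p_0}}$ at one $p_0\in(2,\infty)$ and interpolating with the $O(1)$ bounds at $L^2$ and $L^\infty$ would suffice.

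The gap is the source of that decay. You reduce it to a Kakeya-type inequality $|E|\lesssim 2^{-\epsilon k}\lambda^{-C}|F|^{c}$ for genuine $2^{-k}$-annuli, to be proved by incidence counting, i.e.\ by bounding the positive quantity $\sum_{i,j}|A_i\cap A_j\cap F|$. No such inequality holds. Take $F=B(0,4)$ and $E=\bbB^2$: every annulus $C^\delta(x,r)$ with $x\in E$, $r\in[1,2]$ lies in $F$, so $\lambda=1$ while $|E|\sim|F|\sim1$, contradicting the inequality for large $k$. Dually, if $\cC$ consists of the circles of radius $3/2$ centred on a $\delta$-grid in $\bbB^2$, then $\sum_{C\in\cC}\chi_{C^\delta}\sim\delta^{-1}$ on a set of measure $\sim1$. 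Hence $\|\sum_{C\in\cC}\chi_{C^\delta}\|_{L^q}\sim\delta^{2/q-1}[\#\cC]^{1/q}$, leaving no room for any $\delta^{\epsilon}$. As \S\ref{subsec: geom comparison} explains, positive annular averages converge to $Af$ and can only obey $\delta$-uniform bounds. By treating $M_k$ as ``essentially an average over $2^{-k}$-neighbourhoods of circles'' you throw away exactly the oscillation of $\Chi_{C^\delta}$ that produces the gain.

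In the paper, cancellation is indispensable at two points, after the greedy splitting $\cC=\cC_{\mathrm{trans}}\cup\cC_{\mathrm{tang}}$.

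First, the improved $L^2$ bound for $\cC_{\mathrm{trans}}$ (Lemma~\ref{lem:transversecase}) rests on weak orthogonality (Lemma~\ref{lem:weakorthogonality}): $|\inn{\Chi_{C_1^\delta}}{\Chi_{C_2^\delta}}|\lesssim_N\delta^N$ once $\tang{C_1}{C_2}\geq\delta^{1-\eta_\circ}$. This holds even though $|C_1^\delta\cap C_2^\delta|$ can be as large as $\delta^2(\tang{C_1}{C_2}\,\dist(C_1,C_2))^{-1/2}$.

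Second, the improved bound for $\cC_{\mathrm{tang}}$ is an $L^1$ bound (Lemma~\ref{lem:L1tang}). For positive annuli $\|\sum_{C\in\cA}\chi_{C^\delta}\|_{L^1}=\sum_{C\in\cA}|C^\delta|\sim\delta\,\#\cA$ exactly, so here too the gain must come from cancellation. The clamshell lemma you allude to enters only through Proposition~\ref{prop:localisedclam}, which confines the overlaps of tangent pairs to the thin annulus $C_0^\lambda$. The piece inside $C_0^\lambda$ is then small by Corollary~\ref{cor:intersectingannuli}. Outside it, one passes to $L^2$ on $C_0^{4\rho}$, and the surviving transversal pairs are killed by the localised stationary phase estimate (the Claim in Step~7).

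Your outline has no substitute for these oscillatory estimates, so it cannot produce the $\delta^\epsilon$ gain. If you instead wanted a purely geometric proof, you would need $\delta$-uniform bounds for annular averages. There, even the $\log(1/\delta)$ losses from dyadic pigeonholing in $t$ and $\Delta$ would already be fatal.
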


We remark that, by testing the inequality \eqref{eq: Stein thm} against simple examples (see \S\ref{sec: nec} below), it is straightforward to show that $M$ fails to be bounded on $L^p(\R^d)$ whenever $d \geq 2$ and $p \leq \frac{d}{d-1}$. Thus, the range of $p$ in both Theorem~\ref{thm: Stein} and Theorem~\ref{thm: Bourgain} is sharp. 

 Bourgain's \cite{Bourgain1986} proof of Theorem~\ref{thm: Bourgain} is a beautiful and ingenious synthesis of both Fourier analysis and geometry. Loosely speaking, it centres around analysing an underlying set $\cC$ of circles $C(x,r_x)$, where the radii $r_x$ are chosen to realise the supremum in \eqref{eq: spherical max}. If few pairs of circles in $\cC$ are close to being tangent, then this information can be fed into Stein's Fourier analytic argument \cite{Stein1976} to obtain improved estimates and thereby establish Theorem~\ref{thm: Bourgain} in this case. The crux, therefore, is to deal with the remaining situation where many pairs of circles in $\cC$ are close to being tangent. Here classical Fourier analysis is less effective and Bourgain~\cite{Bourgain1986} instead relies on intricate geometric/combinatorial arguments.




\subsection{Objectives} The goal of this article is to present an exposition of Bourgain's original proof \cite{Bourgain1986} of Theorem~\ref{thm: Bourgain}. For this, we focus on the following three themes.

\begin{enumerate}[1)]
    \item \textit{Fourier analysis vs geometry}. A striking feature of Bourgain's proof~\cite{Bourgain1986} is the manner in which it combines both Fourier analysis and geometry. Here we seek to explain the relative strengths and weaknesses of Fourier analytic and geometric tools in this context. A preliminary goal is to simply understand why Fourier analysis is a natural tool for this problem in the first place. 
    \item \textit{Circles vs spheres}. We highlight various phenomena encountered specifically in the $d = 2$ case, which explain why the circular maximal theorem is more difficult than the spherical. We also explore the relationship between the arguments used by Bourgain~\cite{Bourgain1986} in the $d = 2$ case with those used by Stein~\cite{Stein1976} for $d \geq 3$. We recast the key steps of the latter in the formalism of the former and, in this way, see how Bourgain's arguments~\cite{Bourgain1986} are a natural development of Stein's~\cite{Stein1976}. 
    \item \textit{Streamlining and clarifying}. Although we present all the key ideas, we do not follow Bourgain's arguments~\cite{Bourgain1986} to the letter. We often streamline arguments, adopt more modern formalisms, and reinterpret or rework methods for expository purposes.
\end{enumerate}

Since the publication of Bourgain's work~\cite{Bourgain1986}, there has been sustained and significant interest in geometric maximal operators associated to curves and surfaces in $\R^d$. Indeed, the broad topic (which naturally encompasses the study of the Kakeya and Nikodym conjectures: see, for example, \cite{Zahl2026}) is central to modern harmonic analysis and geometric measure theory.  Much work in this vein has relied on either predominantly Fourier analytic \cite{MSS1992, KLO2023} or predominantly geometric/combinatorial \cite{KW1999, Wolff1997} methods. However, recent papers have highlighted the limitations of purely Fourier analytical approaches (often in the guise of \textit{local smoothing}: see \cite{MSS1992, BHS2021}), and have shown that synthesising Fourier analysis with geometry/combinatorics can lead to deep and interesting new results. Two concrete examples are:
\begin{itemize}
    \item \textit{Space curve maximal functions}. Known Fourier analytic techniques establish partial results, but do not lead to sharp maximal estimates in high dimensions \cite{GMO, BH}.
    \item \textit{Multiparameter maximal functions}. Sharp bounds have only been obtained by combining both Fourier analytic and geometric/combinatorial approaches \cite{LLO2025, LLO2025a, CGY, PYZ, HZ, Zahl2026}.
\end{itemize}
This trend suggests reassessing the relationship between Fourier analytic and geometric/combinatorial arguments in the study of geometric maximal functions, and examining the extent to which these arguments can be synthesised. It is therefore natural and timely to return to Bourgain's original argument~\cite{Bourgain1986}: the proof provides a striking illustration of such a synthesis.



\subsection{Further reading} The study of geometric maximal functions has a rich history in harmonic analysis, spanning over 50 years. Since this is primarily an expository article, we do not attempt to provide a comprehensive survey of the literature. There are, however,  a number of existing survey articles and expository papers on geometric maximal functions and related topics: see, for instance, \cite{SW1978, Wolff1999, BHS2021, RS}. The spherical maximal theorem is also treated in a number of textbooks: see \cite[Chapter XI]{SteinBook}, \cite[\S2.4, \S8.3]{SoggeBook}, \cite[\S6.5]{Grafakos_book}. 

Alternative proofs of Theorem~\ref{thm: Stein} and Theorem~\ref{thm: Bourgain} and closely related results can be found, for instance, in \cite{Marstrand1987, MSS1992, Schlag1998, HJ2025}. Arguably the most notable of these is the landmark paper of Mockenhaupt--Seeger--Sogge \cite{MSS1992}, which further develops the \textit{local smoothing} perspective on the circular maximal function introduced by Sogge~\cite{Sogge1991}. The framework of \cite{MSS1992} has been highly influential, leading to the introduction of decoupling theory \cite{Wolff2000, BD2015} and important advances in the theory of Littlewood--Paley-type inequalities \cite{GWZ2020}. It has also formed the basis for many recent advances in the study of geometric maximal operators: see, for instance, \cite{KLO2022, KLO2023, BHGS2025, LLO2025, LLO2025a, CGY}. We refer the reader to \cite{SoggeBook, BHS2021} for further information.

We highlight \cite{Sogge1991, LP2011} as examples of works directly inspired by Bourgain's proof of the circular maximal theorem~\cite{Bourgain1986}. It is also interesting to note the similarities between the arguments used by Bourgain in \cite{Bourgain1986} and those used later in his groundbreaking 1991 study of Fourier restriction and Kakeya/Nikodym phenomena \cite{Bourgain1991, Bourgain1991a}. 




\subsection*{Structure of the article}  In \S\ref{sec: nec}, we briefly discuss necessary conditions showing the sharpness of Theorem~\ref{thm: Stein} and Theorem~\ref{thm: Bourgain}. In \S\ref{sec: Fourier prelim}, we introduce basic functional and Fourier analytical tools used to study the spherical and circular maximal functions. In \S\ref{sec: Stein}, we give a proof of the key $L^2$ bounds used in the proof of Stein's spherical maximal theorem. For this, we use a formalism which we later exploit in the proof of Bourgain's theorem. We also attempt to motivate the use of Fourier analysis in the study of geometric maximal functions, in part by highlighting limitations of purely geometrical arguments. In \S\ref{sec:geometriclemmas}, we introduce additional geometric prerequisites needed for the proof of Theorem~\ref{thm: Bourgain}. In \S\ref{sec: main argument}, we combine reductions and Fourier analytic tools from \S\ref{sec: Fourier prelim} and \S\ref{sec: Stein} with the geometric results from \S\ref{sec:geometriclemmas} to conclude the proof of Theorem~\ref{thm: Bourgain}.




\subsection*{Acknowledgement} The second author is supported by New Investigator Award UKRI097. Much of this work was carried out during summer 2025 when the first author was supported by EPSRC's Vacation Internships scheme at the University of Edinburgh. The authors are grateful to David Beltran for various suggestions which improved the exposition, and to Tony Carbery for help with the referencing. 




\section{Necessary conditions}\label{sec: nec}

For $d \geq 2$ and $1 \leq p <\infty$, suppose the estimate 
\begin{equation}\label{eq: test ineq}
  \|M f\|_{L^p(\R^d)} \lesssim_p \|f\|_{L^p(\R^d)}  
\end{equation}
holds for all $f \in C_c(\R^d)$. By density, $M$ can then be extended to an operator on $L^p(\R^d)$ which satisfies \eqref{eq: test ineq} for all $f \in L^p(\R^d)$. We test this estimate against some simple examples. 

\begin{example}[Dimensionality condition]\label{ex: dim} For $d \geq 2$ and $0 < \delta \leq 1$, we test \eqref{eq: test ineq} against the function $f \coloneq \chi_{B(0,\delta)}$. For each $x \in \R^d$ with $1 \leq |x| \leq 2$, we choose $r = |x|$, noting that the sphere $C(x,|x|)$ passes through the origin. It then follows that $A \chi_{B(0,\delta)} (x,|x|) \gtrsim \delta^{d-1}$ for all $1 \leq |x| \leq 2$. Thus, if \eqref{eq: test ineq} were to hold, this would force 
\begin{equation*}
    \delta^{d-1} \lesssim \|M \chi_{B(0,\delta)}\|_{L^p(\R^d)} \lesssim_p |B(0,\delta)|^{1/p} \sim \delta^{d/p}, \quad 0 < \delta \leq 1.
\end{equation*}
This implies $p \geq \frac{d}{d-1}$, which shows that Theorem~\ref{thm: Stein} and Theorem~\ref{thm: Bourgain} are sharp, at least up to the endpoint. 
\end{example}

We remark that a variant of the above example can be used to rule out boundedness at the $p = \frac{d}{d-1}$  endpoint exponent.\footnote{In particular, one takes $f(y) \coloneq |y|^{-(d-1)} \log(1/|y|)^{-1} \chi_{B(0,1/2)}(y)$, which can be thought of as a weighted superposition of the ball example at different scales. See, for instance, \cite[p.472]{SteinBook}.}

As the naming suggests, Example~\ref{ex: dim} is sensitive to the dimension of the sphere relative to the ambient Euclidean space. In addition, we consider the following alternative example which is sensitive to the curvature of the sphere. 

\begin{example}[Curvature condition]\label{ex: curv} For $d \geq 2$ and $0 < \delta \leq 1$, let $R(\delta) \coloneq [-\delta^{1/2}, \delta^{1/2}]^{d-1} \times [-\delta, \delta]$. We test \eqref{eq: test ineq} against the function $f \coloneq \chi_{R(\delta)}$. For all $x \in [-\delta^{1/2}, \delta^{1/2}]^{d-1} \times [1,2]$, we choose $r = x_n$, noting that $C(x,x_n)$ is tangent to the $x_n = 0$ hyperplane. It then follows that $A \chi_{R(\delta)} (x,x_n) \gtrsim \delta^{(d-1)/2}$ for all $x \in [-\delta^{1/2}, \delta^{1/2}]^{d-1} \times [1,2]$.  Thus, if \eqref{eq: test ineq} were to hold, this would force 
\begin{equation*}
    \delta^{(d-1)/2} \delta^{(d-1)/(2p)}  \lesssim \|M \chi_{B(0,\delta)}\|_{L^p(\R^d)} \lesssim_p |R(\delta)|^{1/p} \sim \delta^{(d+1)/(2p)}, \quad 0 < \delta \leq 1.
\end{equation*}
This implies $p \geq \frac{2}{d-1}$, which for $d \geq 3$ is a much weaker necessary condition than that arising from Example~\ref{ex: dim}. However, for $d = 2$, we obtain $p \geq 2$, which matches our earlier condition. 
\end{example}

For $d = 2$ a variant of the above example can be used to rule out boundedness of $M$ at the $p = 2$ endpoint exponent.

From the above, we see that Example~\ref{ex: curv} provides an alternative sharp example (at least up to the endpoint) for $L^p$ boundedness of $M$ in the $d=2$ case, but does not provide a sharp example for $d \geq 3$. This already hints at possible differences and additional complications in the study of the circular maximal function, compared with its higher dimensional counterpart. We remark that, by taking many rotated and translated copies of Example~\ref{ex: curv}, it is possible to show concrete differences between the endpoint mapping properties of the circular maximal function and the spherical maximal function for $d \geq 3$: see \cite{STW2003}.




\section{Functional and Fourier analytic preliminaries}\label{sec: Fourier prelim}




\subsection{Mollification}\label{subsec: mollification} The spherical means \eqref{eq: spherical mean} are singular, in the sense that the integration takes place over a lower-dimensional set. We follow a standard scheme in analysis by replacing this singular operator with a sequence of non-singular mollified operators. For this, we perform a standard Littlewood--Paley decomposition.

Fix $\eta \in C_c^\infty (\R)$ even such that
\begin{equation*}
   \eta(s) = 1 \quad \textrm{if $|s| \leq 1$} \qquad \text{ and} \qquad
   \eta(s) = 0 \quad \text{if $|s| \geq 2.$}
\end{equation*}
For $j \in \N_0$, define $\beta_j \in C^{\infty}_c(\R^d)$ by
\begin{equation*}
    \beta_0(\xi) \coloneq \eta(|\xi|) \quad \textrm{and} \quad \beta_j(\xi) \coloneq \beta(2^{-j}\xi), \quad j \in \N, \quad \textrm{for} \quad \beta(\xi) \coloneq \beta_0(\xi)-\beta_0(2\xi).
\end{equation*}
 It then follows that
 \begin{equation*}
   \supp \beta_j \subseteq \{\xi \in \widehat{\R}^d :  2^{j-1} \leq |\xi| \leq 2^{j+1}\}, \quad   j \in \N, \quad \textrm{and} \quad 1 = \sum_{j=0}^\infty \beta_j(\xi), \quad \xi \in \widehat{\R}^d.
 \end{equation*}
For $r > 0$, we decompose $Af$ as a (pointwise) sum of frequency-localised operators
\begin{equation*}
    Af = \sum_{j=0}^\infty A^jf \quad \textrm{where} \quad A^jf(x, r) \coloneq \int_{\widehat{\R}^d} e^{2\pi i x \cdot \xi} \, \beta_j(r \xi) \, \hat{\sigma}_r(\xi) \, \hat{f}(\xi) \, \ud \xi \quad \textrm{for $j \in \N_0$.}
\end{equation*}
Alternatively,
\begin{equation*}
  A^jf(x, r) = f*(\varphi_{j,r}*\sigma_r)(x)  \quad \textrm{where} \quad \varphi_{j,r}(x) \coloneq r^{-d}(\beta_j)\;\widecheck{}\;(r^{-1} x) \quad \textrm{for $j \in \N_0$.}
\end{equation*}
Note that the convolution kernel $\varphi_{j,r}*\sigma_r$ is a Schwartz function, rather than a singular measure.




\subsection{ Oscillatory kernels}\label{sec: osc kernels} We turn to studying the kernel $\varphi_{j,r}*\sigma_r$ of $A^jf(\,\cdot\,, r)$. In what follows, it is useful to adopt the following notation.

\begin{definition}[$\delta$-annulus]\label{dfn: annulus} Given a sphere $C = C(x,r) \subseteq \R^d$, where $(x,r) \in \R^d \times (0, \infty)$, and $\delta > 0$, we define the \textit{$\delta$-annulus}
\begin{equation*}
    C^{\delta} = C^{\delta}(x,r) \coloneq \big\{ y \in \R^d : ||y - x| - r| < \delta\big\}.
\end{equation*}
Thus, $C^{\delta}$ is simply the $\delta$-neighbourhood of $C$. 
\end{definition}

\begin{definition}[Oscillatory kernel]\label{dfn: Chi} Given $0 < \delta \leq 1/2$ dyadic and $C = C(x,r)$ for $(x,r) \in \R^d \times (0,\infty)$, define the \textit{oscillatory kernel} $\Chi_{C^\delta} \colon \R^d \to \C$ by
\begin{equation}\label{eq: Chi}
    \Chi_{C^{\delta}} (y) \coloneq \delta \cdot \varphi_{j,r} * \sigma_r (x - y) \qquad \textrm{for all $y \in \R^d$,}
\end{equation}
where $j \in \N$ satisfies $\delta = 2^{-j}$.
\end{definition}
Given $j \in \N$ and $(x,r) \in \R^d \times (0,\infty)$, we may therefore write
\begin{equation}\label{eq: Chi in action}
    A^jf(x,r) = \delta^{-1} \int_{\R^d} f(y) \Chi_{C^{\delta}(x,r)} (y) \,\ud y \qquad \textrm{for $\delta \coloneq 2^{-j}$.}
\end{equation}

If $r \in [1,2]$, then $\varphi_{j,r}$ has Fourier support at scale $|\xi| \sim 2^j$. Thus, by the uncertainty principle, we expect $\Chi_{C^\delta}$ to be essentially supported on ${C^\delta}$; indeed, this is the motivation for the notation \eqref{eq: Chi}. The following lemma makes this precise. 

\begin{lemma}[Essential support] \label{lem:essentialsupp}
  Let $C = C(x,r)$ for $(x,r) \in \R^d \times [1,2]$ and $0 < \delta \leq 1/2$ be dyadic. If $y\in \R^d$ satisfies $\dist(y, C) \geq \delta^{1-\eta}$ for $\eta > 0$, then 
    \begin{equation}
          |\Chi_{C^\delta} (y)| \lesssim_{N, \eta} \delta^N (1+|y-x|)^{-10d} \quad \text{for all $N\in \N_0$.}
        \label{eq:esssup1}
    \end{equation}
\end{lemma}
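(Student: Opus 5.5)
We would prove the bound directly from the convolution formula in Definition~\ref{dfn: Chi}, using only the Schwartz decay of $\varphi_{j,r}$ together with the fact that $\sigma_r$ is a probability measure supported on the sphere $r S^{d-1}$.

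\textbf{Step 1: write out the kernel.} By definition,
\begin{equation*}
    \Chi_{C^\delta}(y) = \delta \int_{S^{d-1}} \varphi_{j,r}(x - y - r\omega)\,\ud\sigma(\omega),
\end{equation*}
where $\varphi_{j,r}(z) = r^{-d}(\beta_j)\;\widecheck{}\;(r^{-1}z)$. For $j \in \N$ we have $\beta_j = \beta(2^{-j}\,\cdot\,)$, so $(\beta_j)\;\widecheck{}\;(w) = 2^{jd}\check\beta(2^j w) = \delta^{-d}\check\beta(w/\delta)$. Since $\beta$ is Schwartz, so is $\check\beta$, and hence for every $M$
\begin{equation*}
    |(\beta_j)\;\widecheck{}\;(w)| \lesssim_M \delta^{-d}(1+|w|/\delta)^{-M}.
\end{equation*}
Because $r \in [1,2]$, the factors $r^{-d}$ and the rescaling $w = r^{-1}z$ cost only constants. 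This gives
\begin{equation*}
    |\varphi_{j,r}(z)| \lesssim_M \delta^{-d}(1+|z|/\delta)^{-M}.
\end{equation*}

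\textbf{Step 2: geometric lower bound on the argument.} Write $z = x - y - r\omega$. The point $x - r\omega$ lies on $C$, and $|z|$ is the distance from $y$ to that point, so $|z| \geq \dist(y,C) \geq \delta^{1-\eta}$ for every $\omega$. We also need $|z| \gtrsim |y-x|$ when $|y-x|$ is large, in order to produce the factor $(1+|y-x|)^{-10d}$. By the triangle inequality, $|z| \geq |y-x| - r \geq |y-x| - 2$. Combining the two lower bounds gives
\begin{equation*}
    |z| \gtrsim \max\{\delta^{1-\eta},\, |y-x|-2\}.
\end{equation*}
Here $\delta^{1-\eta} \gtrsim \delta$, and one checks that this maximum is $\gtrsim \delta^{1-\eta}(1+|y-x|)$: for $|y-x| \leq 4$ use the first bound, and for $|y-x| > 4$ use $|y-x| - 2 \geq \tfrac12|y-x|$ together with $\delta^{1-\eta} \leq 1$.

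\textbf{Step 3: absorb the powers of $\delta$.} From Step 2,
\begin{equation*}
    (1+|z|/\delta)^{-M} \lesssim \delta^{\eta M}(1+|y-x|)^{-M}.
\end{equation*}
Integrating against the probability measure $\sigma$ then yields
\begin{equation*}
    |\Chi_{C^\delta}(y)| \lesssim_M \delta^{1-d+\eta M}(1+|y-x|)^{-M}.
\end{equation*}
Choosing $M \geq 10d$ with $\eta M \geq N + d$ gives \eqref{eq:esssup1}.

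\textbf{Main obstacle.} No step is deep; the only care needed is in Step 2. The aim is to trade the separation $\dist(y,C) \geq \delta^{1-\eta}$ for a gain of $\delta^{\eta}$ per unit of decay, while simultaneously keeping the spatial decay in $|y-x|$ uniform in $\delta$. These two requirements are reconciled by the elementary case split at $|y-x| \sim 4$ described above.
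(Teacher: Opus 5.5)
Your argument is essentially the paper's: write $\Chi_{C^\delta}(y)$ as $\delta$ times a $\sigma$-average of $\varphi_{j,r}(x-y-r\omega)$, use the Schwartz decay of the rescaled kernel (uniform for $r\in[1,2]$), and feed in the two lower bounds $|x-y-r\omega|\geq\delta^{1-\eta}$ and $|x-y-r\omega|\geq|y-x|/2$ for $|y-x|\geq 4$. The one slip is in Step 2, where you use $\delta^{1-\eta}\leq 1$. This holds only for $\eta\leq 1$, and for $\eta>1$ your combined bound $|z|\gtrsim\delta^{1-\eta}(1+|y-x|)$ is false: take $\eta=2$ and $|y-x|=\delta^{-1}+2$. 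You can fix this in either of two ways. One is to note that for $\eta>1$ the hypothesis implies the $\eta=1$ case, since then $\delta^{1-\eta}\geq 1$. The other, which is what the paper's ``in conjunction with our earlier observations'' amounts to, is to split $(1+|z|/\delta)^{-2M}$ into two factors and apply one lower bound to each; this works directly for every $\eta>0$.
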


Lemma~\ref{lem:essentialsupp} shows that $\Chi_{C^\delta}$ is tiny outside any slight enlargement of $C^{\delta}$, and so the function is in this sense `essentially supported' on $C^{\delta}$.

\begin{proof}[Proof (of Lemma~\ref{lem:essentialsupp})] Let $\varphi_r (x) \coloneq r^{-d}\check{\beta}(r^{-1} x)$. Recall from definition \eqref{eq: Chi} that
    \begin{equation*}
        \Chi_{C^\delta} (y) \coloneq \delta \sigma_r * \varphi_{j,r}(x-y)= \delta \cdot \delta^{-d} \int_{S^{d-1}} \varphi_r(\delta^{-1} (x-y-r\omega)) \ud \sigma(\omega),
    \end{equation*}
    where $\delta = 2^{-j}$. Since $\varphi_r$ is Schwartz with uniform decay for $r \in [1,2]$, we have
    \begin{equation}\label{eq:esssup2}
        |\Chi_{C^\delta}(y)| \lesssim_N \delta^{1-d} \int_{S^{d-1}} (1 + \delta^{-1} |x - y - r \omega|)^{-N} \ud \sigma(\omega) \quad \text{for all $N \in \N_0$.}
    \end{equation}

     Suppose $y \in \R^d$ satisfies $\dist(y, C) \geq \delta^{1- \eta}$ for $\eta > 0$, so that $|x-y-r\omega| \geq \delta^{1-\eta}$ for all $\omega \in S^{d-1}$. Applying this to \eqref{eq:esssup2}, we deduce that $|\Chi_{C^\delta}(y)| \lesssim_{N, \eta} \delta^N$ for all $N \in \N_0$. This establishes the desired bound \eqref{eq:esssup1} in the case $|y-x| < 4$. Finally, if $|y-x| \geq 4$, then we may bound $|x-y-r\omega| \geq |y-x|/2$. Applying this to \eqref{eq:esssup2}, in conjunction with our earlier observations, we deduce that \eqref{eq:esssup1} holds in the remaining case.
\end{proof}

Due in part to our choice of normalisation, we also heuristically expect the (non-rigorous) identity $|\Chi_{C^\delta}| = \chi_{C^\delta}$. One rigorous manifestation of this heuristic is the following lemma.

\begin{lemma}\label{lem: Lp Chi} Let $0 < \delta \leq 1/2$ be dyadic and $C = C(x,r)$ for $(x,r) \in \R^d \times [1,2]$. For all $1 \leq p \leq \infty$ we have
\begin{equation*}
    \|\Chi_{C^{\delta}}\|_{L^p(\R^d)} \lesssim \delta^{1/p}. 
\end{equation*}
\end{lemma}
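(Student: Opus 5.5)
By log-convexity of $L^p$ norms (Hölder's inequality), it suffices to prove the two endpoint bounds
\begin{equation*}
\|\Chi_{C^\delta}\|_{L^\infty(\R^d)} \lesssim 1 \quad\text{and}\quad \|\Chi_{C^\delta}\|_{L^1(\R^d)} \lesssim \delta .
\end{equation*}
Then $\|\Chi_{C^\delta}\|_p \le \|\Chi_{C^\delta}\|_\infty^{1-1/p}\|\Chi_{C^\delta}\|_1^{1/p} \lesssim \delta^{1/p}$. Both endpoints will come from the pointwise bound \eqref{eq:esssup2} established in the proof of Lemma~\ref{lem:essentialsupp}:
\begin{equation*}
|\Chi_{C^\delta}(y)| \lesssim_N \delta^{1-d}\int_{S^{d-1}}(1+\delta^{-1}|x-y-r\omega|)^{-N}\,\ud\sigma(\omega).
\end{equation*}
This bound holds uniformly for $r\in[1,2]$. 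Note also that $\delta = 2^{-j}$ with $j \geq 1$, so $\beta_j$ is the annular piece and the formula in that proof applies.

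\textbf{$L^1$ bound.} Integrate the pointwise bound in $y$ and apply Fubini. For each fixed $\omega$,
\begin{equation*}
\int_{\R^d}(1+\delta^{-1}|x-y-r\omega|)^{-N}\,\ud y = \delta^d \int_{\R^d} (1+|z|)^{-N}\,\ud z \lesssim \delta^d
\end{equation*}
once $N = d+1$. Since $\sigma$ has mass $1$, this gives $\|\Chi_{C^\delta}\|_1 \lesssim \delta^{1-d}\delta^d = \delta$. Equivalently, one can argue directly that $\|\varphi_{j,r}*\sigma_r\|_1 \le \|\varphi_{j,r}\|_1\|\sigma_r\| = \|\check\beta\|_1 \lesssim 1$ by Young's inequality for measures, and then multiply by $\delta$.

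\textbf{$L^\infty$ bound.} This is the only step that uses geometry, namely that a sphere of radius $r\sim 1$ has only $O(\delta^{d-1})$ of its normalised surface measure in any $\delta$-ball. Set $w = x-y$ and decompose $S^{d-1}$ dyadically according to the size of $|w - r\omega|$. The set $\{\omega : |w-r\omega| \le 2^k\delta\}$ is the intersection of the unit sphere with a ball of radius $2^k\delta/r \lesssim 2^k\delta$. Its $\sigma$-measure is therefore $\lesssim \min\{1,(2^k\delta)^{d-1}\}$; this follows from the standard estimate for spherical caps, uniformly in the centre of the ball. On the $k$-th shell the integrand is $\lesssim 2^{-kN}$, so the integral is bounded by
\begin{equation*}
\sum_{k\ge0} 2^{-kN}(2^k\delta)^{d-1} \lesssim \delta^{d-1}
\end{equation*}
for $N \geq d$. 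Multiplying by the prefactor $\delta^{1-d}$ yields $\|\Chi_{C^\delta}\|_\infty \lesssim 1$.

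The main point requiring care is the cap-measure estimate, specifically its uniformity in where the ball sits relative to the sphere, which is where $r\in[1,2]$ is used. Everything else is routine. Interpolating the two endpoints as above completes the proof.
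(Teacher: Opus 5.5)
Your proof is correct and follows essentially the same route as the paper. The paper also reduces by log-convexity to $p=1$, which it proves with the Young's inequality argument you give as your alternative, and to $p=\infty$, where it sketches the same $\delta$-ball cap-measure bound and leaves the Schwartz tails to a dyadic decomposition; you carry out that decomposition explicitly. One minor correction: the cap estimate is uniform in the centre of the ball for any radius. The hypothesis $r \in [1,2]$ is needed elsewhere, namely to get $2^k\delta/r \leq 2^k\delta$ and to make the Schwartz bounds for $\varphi_r$ uniform in $r$.
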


\begin{proof} 

By log-convexity of $L^p$, it suffices to consider $p = 1$ and $p = \infty$ only.\medskip

\noindent \underline{$p = 1$}. Recall that $\Chi_{C^\delta} (y) \coloneq \delta \cdot \varphi_{j,r} * \sigma_r (x - y)$, where $j \in \N$ satisfies $\delta = 2^{-j}$. Thus, by Young's inequality,
\begin{equation*}
    \norm{\Chi_{C^\delta}}_{L^1(\R^d)} 
    = 
    \delta \, \norm{\varphi_{j,r} * \sigma_r}_{L^1(\R^d)} 
    \lesssim
    \delta \, \norm{\varphi_{j,r}}_{L^1(\R^d)} \, \norm{\sigma_r} 
    \lesssim
    \delta,
\end{equation*}
as required.\medskip

\noindent \underline{$p = \infty$}. For $\varphi_r (x) \coloneq r^{-d}\check{\beta}(r^{-1} x)$ as in the proof of the previous lemma,
\begin{equation*}
    \norm{\Chi_{C^\delta}}_{L^{\infty}(\R^d)} 
    = 
    \delta \, \norm{\varphi_{j,r} * \sigma_r}_{L^{\infty}(\R^d)} =  \delta^{1-d} \sup_{y \in\R^d} \int_{S^{d-1}} |\varphi_r(\delta^{-1}(y-r\omega))| \, \ud \sigma(\omega).
\end{equation*} 
    Ignoring Schwartz tails, $\varphi_r$ can be thought of as concentrated on the unit ball $B(0,1)$. The above integral is then bounded by the maximum $\sigma_r$-measure of the intersection between $rS^{d-1}$ and a ball $B(y,\delta)$. This clearly gives the desired bound. It is not difficult to make this argument rigorous, dealing with the Schwartz tails via dyadic decomposition (see, for example, \cite[Lemma 6.5.3]{Grafakos_book}).
\end{proof}




\subsection{The local maximal operator} 

For $j \in \N_0$ and the frequency localised operators $A^j$ as defined in \S\ref{subsec: mollification}, we introduce the \textit{local} maximal operators
\begin{equation}\label{eq: loc max}
  M_{\ell}f(x) \coloneq \sup_{2^{\ell} \leq r \leq 2^{\ell+1}} |Af(x, r)| \quad \textrm{and} \quad    M^j_{\ell}f(x) \coloneq \sup_{2^{\ell} \leq r \leq 2^{\ell+1}} |A^jf(x, r)|, \quad \ell \in \Z.
\end{equation}
Note that the supremum is now restricted to the compact interval $[2^{\ell},2^{\ell+1}]$, rather than the entire range $(0, \infty)$ in the definition of $Mf$. 

\begin{lemma}\label{lem: loc to glob} Let $2 \leq p < \infty$ and suppose there exists some $\varepsilon(p) > 0$ such that
\begin{equation}\label{eq: loc to glob a}
    \| M^j_0f\|_{L^p(\R^d)} \lesssim 2^{-j\varepsilon(p)} \|f\|_{L^p(\R^d)}
\end{equation}
holds for all $j \in \N$. Then 
\begin{equation}\label{eq: loc to glob b}
    \|M f\|_{L^p(\R^d)} \lesssim \|f\|_{L^p(\R^d)}.
\end{equation}
\end{lemma}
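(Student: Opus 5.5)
Our plan is to pass from the local estimate \eqref{eq: loc to glob a} to the global bound \eqref{eq: loc to glob b} in three steps: use scaling to handle every dyadic range of radii, use a square function to combine the ranges, and treat the low-frequency piece $j = 0$ by domination by the Hardy--Littlewood maximal function.

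\emph{Scaling.} We begin with the observation that $M^j_\ell f(x) = M^j_0(f(2^\ell\,\cdot\,))(2^{-\ell}x)$. This holds because $\beta_j(r\xi)\hat\sigma_r(\xi)$ is invariant under the joint rescaling $r \mapsto 2^\ell r$, $\xi \mapsto 2^{-\ell}\xi$. Consequently \eqref{eq: loc to glob a} holds for every $M^j_\ell$, with the same constant, uniformly in $\ell \in \Z$. We then split $Af = A^0 f + \sum_{j\ge1} A^j f$.

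\emph{The term $A^0$.} Its kernel is $\varphi_{0,r}*\sigma_r$, where $\varphi_{0,r}(x) = r^{-d}\check\beta_0(r^{-1}x)$ and $\check\beta_0$ is Schwartz. The convolution $\check\beta_0 * \sigma$ is then Schwartz, so $\varphi_{0,r}*\sigma_r$ is an $L^1$-dilate of a radially decreasing integrable majorant. It follows that $\sup_{r>0}|A^0f| \lesssim M_{HL}f$, and $M_{HL}$ is bounded on $L^p$ for every $p>1$.

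\emph{The terms $j \geq 1$.} By the triangle inequality it suffices to prove, for each $j\ge1$,
\begin{equation*}
\Big\|\sup_{\ell\in\Z} M^j_\ell f\Big\|_{L^p} \lesssim j\, 2^{-j\varepsilon'}\|f\|_{L^p}
\end{equation*}
for some $\varepsilon' > 0$, since the resulting series is summable in $j$. Here we use that $A^j f(\,\cdot\,, r)$ with $r\in[2^\ell,2^{\ell+1}]$ has Fourier support in the annulus $|\xi|\sim 2^{j-\ell}$. Let $P_k$ denote a Littlewood--Paley projection to $|\xi| \sim 2^k$, with slightly fattened support. Then $A^j f(\,\cdot\,, r) = A^j (\tilde P_{j-\ell} f)(\,\cdot\,, r)$. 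Bounding the supremum over $\ell$ by an $\ell^p$ sum gives
\begin{equation*}
\Big\|\sup_\ell M^j_\ell f\Big\|_p^p \le \sum_\ell \|M^j_\ell \tilde P_{j-\ell}f\|_p^p \lesssim 2^{-jp\varepsilon(p)}\sum_k\|\tilde P_k f\|_p^p .
\end{equation*}
Since $p\ge2$, we have $\sum_k\|\tilde P_kf\|_p^p \le \|(\sum_k|\tilde P_kf|^2)^{1/2}\|_p^p \lesssim \|f\|_p^p$, by the embedding $\ell^2\subset\ell^p$ followed by the Littlewood--Paley square function estimate. This yields the required bound, in fact without the factor $j$. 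Summing over $j$ with the geometric decay $2^{-j\varepsilon(p)}$ then gives \eqref{eq: loc to glob b}.

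The main obstacle is the frequency-localisation identity $A^j f(\,\cdot\,, r) = A^j(\tilde P_{j-\ell} f)(\,\cdot\,, r)$, which must hold uniformly for all $r$ in the dyadic range. It is exact here, since $\supp\beta_j(r\,\cdot\,)\subseteq\{2^{j-1}/r\le|\xi|\le2^{j+1}/r\}$ lies inside a fixed enlarged annulus $|\xi| \in [2^{j-\ell-2}, 2^{j-\ell+1}]$. We may therefore take $\tilde P_k$ to be a smooth multiplier equal to $1$ on that enlarged annulus, with boundedly overlapping supports. The step that uses $p\ge2$ essentially is the $\ell^p$ summation, which is exactly why the lemma assumes $p \geq 2$.
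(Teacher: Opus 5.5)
Your proposal is correct and follows essentially the same route as the paper: scaling to transfer the hypothesis to every $M^j_\ell$, domination by the Hardy--Littlewood maximal function for $j=0$, the exact frequency-localisation identity $A^jf(\,\cdot\,,r)=A^j(P_{j-\ell}f)(\,\cdot\,,r)$ for $2^\ell\le r\le 2^{\ell+1}$, an $\ell^p$ sum over dyadic scales, and the embedding $\ell^2\subset\ell^p$ combined with the Littlewood--Paley square function estimate. As you observe yourself, the intermediate allowance of an extra factor $j$ is never needed.
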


\begin{proof} Fixing $j \in \N_0$, it suffices to show
\begin{equation}\label{eq: loc to glob 1}
   \|M^jf\|_{L^p(\R^d)} \lesssim 2^{-j\varepsilon(p)} \|f\|_{L^p(\R^d)} \quad \textrm{where} \quad M^jf(x) \coloneq \sup_{r > 0} |A^jf(x,r)|, 
\end{equation}
since then \eqref{eq: loc to glob b} immediately follows by summing together the different frequency contributions using the geometric decay factor. For $j = 0$, we may pointwise bound $M^0f(x) \lesssim M_{\textrm{HL}}f(x)$ where $M_{\textrm{HL}}$ denotes the Hardy--Littlewood maximal function (see, for instance, \cite[Ch. II, \S2.1, Proposition]{SteinBook}). This immediately implies \eqref{eq: loc to glob 1} in this case.

Henceforth, assume $j \in \N$. We may pointwise bound $M^j$ by an $\ell^p$ sum of the $M^j_{\ell}$, giving
\begin{equation}\label{eq: loc to glob 2}
        \|M^j f\|_{L^p(\R^d)} = \|\sup_{k \in \Z} M^j_{j - k}f\|_{L^p(\R^d)}  \leq \Brac{\sum_{k \in \Z} \| M^j_{j - k}f\|_{L^p(\R^d)}^p}^{1/p}.
    \end{equation}
Note that, by a simple scaling argument, \eqref{eq: loc to glob a} automatically implies
\begin{equation}\label{eq: loc to glob 3}
    \|M^j_{\ell}f\|_{L^p(\R^d)} \lesssim 2^{-j\varepsilon(p)} \|f\|_{L^p(\R^d)} \qquad \textrm{for all $\ell \in \Z$.}  
\end{equation}
This allows us to bound the individual terms in \eqref{eq: loc to glob 2}, but we still need to carry out the summation in $k$. For this, we appeal to Fourier orthogonality.

Let $\tilde{\beta} \in C^{\infty}_c(\widehat{\R}^d)$ be a bump function satisfying $\tilde{\beta}(\xi) = 1$ if $1/4 \leq |\xi| \leq 4$ and $\tilde{\beta}(\xi) = 0$ if $|\xi| \notin [1/8,8]$. Define the associated Littlewood--Paley projectors $P_k$ by
 \begin{equation*}
     (P_k f) \ft (\xi) \coloneq \tilde{\beta} (2^{-k} \xi) \hat{f} (\xi) \qquad \textrm{for all $k \in \Z$.}
 \end{equation*}
Since $2 \leq p < \infty$, by the classical Littlewood--Paley inequality,\footnote{The leftmost expression can also be directly estimated by the rightmost expression by interpolation between the (elementary) $p=2$ and $p=\infty$ cases.}
\begin{equation}\label{eq: loc to glob 4}
    \Brac{\sum_{k \in \Z} \|P_kf\|_{L^p(\R^d)}^p}^{1/p} \lesssim \Big\| \Big( \sum_{k \in \Z} |P_kf|^2 \Big)^{1/2}\Big\|_{L^p(\R^d)}\lesssim \|f\|_{L^p(\R^d)}.
\end{equation}

Recalling the properties of $\beta$ as defined in \S\ref{subsec: mollification}, it is a simple matter to verify
    \begin{equation*}
        A^jf(x,r) = A^j (P_{j-\ell} f)(x,r) \qquad 
        \textrm{for $2^\ell \leq r \leq 2^{\ell + 1}$.}
    \end{equation*}
Combining this with \eqref{eq: loc to glob 3}, we deduce that
   \begin{equation}\label{eq: loc to glob 5}
    \|M^j_{\ell}f\|_{L^p(\R^d)} \lesssim 2^{-j\varepsilon(p)} \|P_{j-\ell}f\|_{L^p(\R^d)} \qquad \textrm{for all $\ell \in \Z$.}  
\end{equation}
We apply \eqref{eq: loc to glob 5} to each term on the right-hand side of \eqref{eq: loc to glob 2}. Finally, applying the Littlewood--Paley inequality \eqref{eq: loc to glob 4} to the resulting expression yields the desired bound \eqref{eq: loc to glob 1} and thereby concludes the proof. 
\end{proof}

By restricting the radii to $r \in [1,2]$, the operator $M^j_0$ behaves essentially locally, in the sense that the value $M^j_0f(x)$ is essentially determined by the values of $f(y)$ for, say, $y \in B(x, 2)$.\footnote{This is not literally true, due to the presence of Schwartz tails in the kernel of $A^j$. Nevertheless, values of $f(y)$ for $y \notin B(x, 2)$ are attributed very little weight in our averages.} This leads to the following (standard) reduction. 

\begin{lemma}\label{lem: loc op red}
Let $1 \leq p < \infty$ and $j \in \N$ and suppose the bound
\begin{equation}\label{eq: loc op red a}
    \| M^j_0f\|_{L^p(\bbB^d)} \lesssim 2^{-j \varepsilon(p)} \|f\|_{L^p(\R^d)}
\end{equation}
holds for $\bbB^d \coloneq B(0, 1/2)$ and some $\varepsilon(p) \in \R$ with $\varepsilon(p) \leq d$. Then 
\begin{equation*}
    \|M^j_0 f\|_{L^p(\R^d)} \lesssim 2^{-j \varepsilon(p)} \|f\|_{L^p(\R^d)}.
\end{equation*}
\end{lemma}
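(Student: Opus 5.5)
We will localise in space using translation invariance. We cover $\R^d$ by the translated balls $\bbB^d + k/(2\sqrt d)$ or, more conveniently, by a lattice of cubes $Q_k$ of side comparable to $1$ (each contained in a translate $x_k + \bbB^d$), $k \in \Z^d$. Since $M^j_0$ commutes with translations, the hypothesis \eqref{eq: loc op red a} gives $\|M^j_0 f\|_{L^p(x_k + \bbB^d)} \lesssim 2^{-j\varepsilon(p)}\|f\|_{L^p(\R^d)}$ for every $k$. Summing this directly over $k$ is useless, so the plan is to split $f$ into a local part and a far part for each cube. Write $f = f\chi_{\tilde Q_k} + f\chi_{\R^d\setminus \tilde Q_k}$, where $\tilde Q_k \coloneq B(x_k, 4)$.

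For the local part, the translated hypothesis gives
\begin{equation*}
\|M^j_0(f\chi_{\tilde Q_k})\|_{L^p(Q_k)} \lesssim 2^{-j\varepsilon(p)}\|f\|_{L^p(\tilde Q_k)}.
\end{equation*}
Raising this to the $p$-th power and summing over $k$, the bounded overlap of the $\tilde Q_k$ yields $2^{-j\varepsilon(p)}\|f\|_{L^p(\R^d)}$, as required.

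For the far part, we use Lemma~\ref{lem:essentialsupp}. If $x \in Q_k$, $r \in [1,2]$ and $|y - x_k| \geq 4$, then $\dist(y, C(x,r)) \geq 1 \geq \delta^{1/2}$. Combining \eqref{eq: Chi in action} with \eqref{eq:esssup1} for $N = d+1$ gives
\begin{equation*}
|A^j(f\chi_{\R^d\setminus\tilde Q_k})(x,r)| \lesssim \delta^{-1}\delta^{d+1}\int_{\R^d} |f(y)|(1+|x-y|)^{-10d}\,\ud y, \qquad \delta = 2^{-j}.
\end{equation*}
The right-hand side is independent of $r$, and is $2^{-jd}$ times a convolution of $|f|$ with an $L^1$ kernel. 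Young's inequality then gives an $L^p(\R^d)$ bound of $2^{-jd}\|f\|_{L^p} \leq 2^{-j\varepsilon(p)}\|f\|_{L^p}$, using $\varepsilon(p) \le d$ (this is exactly where that hypothesis enters). Since these pointwise bounds are uniform in $k$, summing the $L^p(Q_k)$ norms over the disjoint $Q_k$ is harmless.

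Combining the two parts via $M^j_0 f \leq M^j_0(f\chi_{\tilde Q_k}) + M^j_0(f\chi_{\R^d\setminus\tilde Q_k})$ on each $Q_k$ concludes the argument. The only delicate step is the far-part estimate. There one must check that the distance condition of Lemma~\ref{lem:essentialsupp} holds uniformly, and that the decay $(1+|y-x|)^{-10d}$ is integrable, so that Young's inequality applies. Both follow from the choice of enlargement radius $4$, since $|x - x_k| \le 1/2$ and $r \le 2$.
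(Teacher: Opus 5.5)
Your proof is correct and follows essentially the same route as the paper's: you split $f$ into a part near each unit-scale piece and a far part, handle the near part by the translated hypothesis plus bounded overlap, and handle the far part by Lemma~\ref{lem:essentialsupp}, Young's inequality and $\varepsilon(p) \leq d$. The differences are cosmetic: you use a disjoint lattice of cubes with enlargement radius $4$ where the paper uses overlapping balls with radius $7/2$. Your explicit choice $N = d+1$, which absorbs the $\delta^{-1}$ normalisation in \eqref{eq: Chi in action}, is slightly more careful than the paper's write-up.
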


\begin{remark}\label{rmk: no ext tang} A consequence of the above localisation is that it rules out \textit{externally} tangent pairs of spheres. In particular, if $(x_1, r_1)$, $(x_2, r_2) \in \bbB^d \times [1,2]$, then 
\begin{equation*}
r_1 + r_2 - |x_1 - x_2| \geq 2 - 1 = 1 > 0,
\end{equation*} 
which means $C(x_1, r_1)$ and $C(x_2, r_2)$ cannot be externally tangent (and are in some sense always quantitatively `far' from being externally tangent). However, it is still possible that $|r_1 - r_2| = |x_1 - x_2|$, in which case $C(x_1, r_1)$ and $C(x_2, r_2)$ are \textit{internally} tangent. 
\end{remark}

\begin{proof}[Proof (of Lemma~\ref{lem: loc op red})] Let $\cB$ be a boundedly overlapping cover of $\R^d$ by balls of radius $1/2$. By translation invariance, \eqref{eq: loc op red a} holds with $\bbB^d$ replaced with any $B \in \cB$.

For $B \in \cB$, let $B^*$ denote the ball concentric to $B$ but with radius $7/2$. We bound
\begin{equation}\label{eq: loc op red 1}
    \|M_0^jf\|_{L^p(B)} \leq \big\|M_0^j\big(\chi_{B^*} f\big)\big\|_{L^p(B)} + \big\|M_0^j\big(\chi_{\R^d \setminus B^*} f\big)\big\|_{L^p(B)}.
\end{equation}

By the local estimate \eqref{eq: loc op red a} and the bounded overlap of $\{B^* : B \in \cB\}$, we have
\begin{equation}\label{eq: loc op red 2}
    \Big(\sum_{B \in \cB} \big\|M^j_0\big(\chi_{B^*} f\big)\big\|_{L^p(B)}^p\Big)^{1/p} \lesssim 2^{-j \varepsilon(p)} \Big(\sum_{B \in \cB} \| f\|_{L^p(B^*)}^p\Big)^{1/p} \lesssim 2^{-j \varepsilon(p)} \|f\|_{L^p(\R^d)}. 
\end{equation}
On the other hand, fixing $B \in \cB$, if $x \in B$, $\omega \in S^{d-1}$, $1 \leq r \leq 2$ and $y \in \R^d \setminus B^*$, then $|x + r \omega - y| \geq 1$. In particular, $\dist(y, C(x,r)) \geq \delta^{1- \eta}$ for $\delta \coloneq 2^{-j}$ and $\eta \coloneq 1$. Thus, by Lemma~\ref{lem:essentialsupp}, we have
\begin{equation*}
    |\Chi_{C^{\delta}(x,r)}(y)| \lesssim \delta^d (1 + |x-y|)^{-10d} \leq 2^{-j\varepsilon(p)}  K(x-y) \quad \textrm{for} \quad K(y) \coloneq (1 + |y|)^{-10d}. 
\end{equation*}
Therefore, by Young's inequality and the bounded overlap of $\cB$, we conclude that
\begin{equation}\label{eq: loc op red 3}
    \Big(\sum_{B \in \cB} \big\|M^j_0\big(\chi_{\R^d \setminus B^*} f\big)\big\|_{L^p(B)}^p\Big)^{1/p} \lesssim 2^{-j \varepsilon(p)} \| K \ast |f|\|_{L^p(\R^d)} \lesssim 2^{-j\varepsilon(p)}  \|f\|_{L^p(\R^d)}. 
\end{equation}
Taking the $\ell^p$ sum of \eqref{eq: loc op red 1} over all $B \in \cB$ and combining the resulting bound with \eqref{eq: loc op red 2} and \eqref{eq: loc op red 3}, we obtain the desired estimate. 
\end{proof}




\subsection{Discretisation, linearisation and duality} In light of Lemmas~\ref{lem: loc op red} and~\ref{lem: loc to glob}, to prove the circular maximal theorem for $d = 2$ and $p > 2$, matters are reduced to proving estimates of the form \eqref{eq: loc op red a}. 

By the uncertainty principle, since the averages $A^jf$ are frequency localised at scale $2^j$, we expect $|A^jf|$ to be essentially constant at scale $2^{-j}$. This motivates breaking the spatial domain into cubes of side-length $2^{-j}$.

Fix $j \in \N$ and let $\cQ_j$ denote the minimal covering of $\bbB^d$ by closed dyadic cubes of sidelength $2^{-j}$, where $\bbB^d$ is as defined in Lemma~\ref{lem: loc op red}. For each $Q_j \in \cQ_j$, we can find some $x_{Q_j} \in Q_j$ and $r_{Q_j} \in [1,2]$ close to realising the relevant suprema, so that
\begin{equation*}
     M^j_0f(x) \leq 2  M^j_0 f(x_{Q_j}) \leq 4 A^j f(x_{Q_j}, r_{Q_j}) \qquad \textrm{for all $x \in Q_j$.}
\end{equation*}
Write $C_{Q_j} = C(x_{Q_j}, r_{Q_j})$ and let $\delta \coloneq 2^{-j}$. Using \eqref{eq: Chi in action}, each $A^j f(x_{Q_j}, r_{Q_j})$ is given by a normalised integral of $f$ against the associated oscillatory kernel $\Chi_{C_{Q_j}^{\delta}}$. Decomposing the $L^p(\bbB^d)$-norm as an $\ell^p$ sum of norms over the constituent cubes of $\cQ_j$, we therefore see that
\begin{equation*}
 \norm{ M^j_0f}_{L^p(\bbB^d)}^p \lesssim \sum_{Q_j \in \cQ_j} \delta^d \cdot \Abs{ \delta^{-1}\int_{\R^d} f(y) \Chi_{C_{Q_j}^{\delta}}(y)\,\ud y}^p.
\end{equation*}
Thus, by relabelling and a simple pigeonholing argument,\footnote{That is, we partition $\{C_{Q_j} : Q_j \in \cQ_j\}$ into $O(1)$ subsets, each formed of spheres with $\delta$-separated centres, and define $\cC$ to be a subset which maximises the right-hand side of~\eqref{eq: duality 1}.} 
\begin{equation}\label{eq: duality 1}
    \norm{ M^j_0f}_{L^p(\bbB^d)} \lesssim \delta^{d/p - 1} \Brac{ \sum_{C \in \cC} \Abs{\int_{\R^d} f(y) \Chi_{C^\delta}(y)\,\ud y}^p}^{1/p}
\end{equation}
where $\cC$ is a set of spheres centred at $\delta$-separated points in $\bbB^d$, with radii lying in $[1,2]$. These observations prompt the following definition.

\begin{definition}
Let $d \geq 2$ and $0 < \delta \leq 1/2$.
\begin{enumerate}[i)]
    \item A sphere $C$ in $\R^d$ is \textit{unit scale} if $C = C(x,r)$ for some $x \in \bbB^d$ and $r \in [1,2]$.
    \item A set $\cC$ of spheres in $\R^d$ is \textit{$\delta$-separated} if $|x_1 - x_2| \geq \delta$ whenever $C(x_1, r_1)$, $C(x_2, r_2) \in \cC$ are distinct. 
\end{enumerate} 
\end{definition}

Combining \eqref{eq: duality 1} with duality of $\ell^p$, we are led to the following reformulation of the maximal estimate. 

\begin{lemma}[Duality]\label{lem: duality}
    To prove the local maximal bound \eqref{eq: loc op red a} for fixed $1 \leq p < \infty$ and $\varepsilon(p) \in \R$, it suffices to show
    \begin{equation}\label{eq: duality}
        \NormLpRd{\sum_{C \in \cC} a_C \Chi_{C^\delta}}{p'}{d} \lesssim \delta^{1-d/p + \varepsilon(p)} \Brac{ \sum_{C \in \cC} |a_C|^{p'}}^{1/p'}
    \end{equation}
    holds for all $0 < \delta \leq 1/2$ dyadic, $\cC$ any $\delta$-separated set of unit scale spheres and $(a_C)_{C \in \cC}$ any complex sequence. 
\end{lemma}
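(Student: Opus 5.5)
The plan is to start from the discretised bound \eqref{eq: duality 1}, which holds for every $f \in C_c(\R^d)$. Here $\cC$ is a $\delta$-separated set of unit scale spheres, $\delta = 2^{-j}$, and $\cC$ depends on $f$. The $\ell^p$ sum on the right-hand side will be converted into a pairing by duality of $\ell^p$ and $\ell^{p'}$. Since $\cC$ is finite (centres are $\delta$-separated in $\bbB^d$), I will choose a complex sequence $(a_C)_{C \in \cC}$ with $\|(a_C)\|_{\ell^{p'}} = 1$ such that
\begin{equation*}
    \Brac{ \sum_{C \in \cC} \Abs{\int_{\R^d} f(y) \Chi_{C^\delta}(y)\,\ud y}^p}^{1/p} = \sum_{C \in \cC} a_C \int_{\R^d} f(y) \Chi_{C^\delta}(y)\,\ud y.
\end{equation*}
For $1 < p < \infty$ this means taking $a_C$ proportional to $\overline{\sgn(I_C)}|I_C|^{p-1}$, where $I_C$ is the integral. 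For $p = 1$ we take $a_C = \overline{\sgn(I_C)}$ with the $\ell^\infty$ norm, which is consistent with $p' = \infty$.

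Next I interchange the finite sum and the integral. This is justified because each $\Chi_{C^\delta}$ is Schwartz (in particular it lies in every $L^q$ by Lemma~\ref{lem: Lp Chi}) and $f \in C_c$. The right-hand side then becomes
\begin{equation*}
    \int_{\R^d} f(y) \sum_{C \in \cC} a_C \Chi_{C^\delta}(y)\,\ud y.
\end{equation*}
By H\"older's inequality this is at most $\|f\|_{L^p(\R^d)} \, \big\|\sum_{C} a_C \Chi_{C^\delta}\big\|_{L^{p'}(\R^d)}$.

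Applying the hypothesis \eqref{eq: duality} with this $\cC$ and this $(a_C)$ bounds the $L^{p'}$ norm by $\delta^{1 - d/p + \varepsilon(p)}$. Combined with the prefactor $\delta^{d/p - 1}$ from \eqref{eq: duality 1}, the total is $\delta^{\varepsilon(p)} \|f\|_{L^p} = 2^{-j\varepsilon(p)}\|f\|_{L^p}$, which is \eqref{eq: loc op red a}. The implicit constants in \eqref{eq: duality 1} come from the pigeonholing and the near-maximisers, and are absolute. The crucial point is that \eqref{eq: duality} is assumed uniformly over all $\delta$-separated families and all coefficient sequences. So the dependence of $\cC$ on $f$ causes no difficulty.

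I do not expect a real obstacle. The only points needing care are the following. First, the exponent bookkeeping must give exactly $\delta^{\varepsilon(p)}$. Second, the case $p = 1$ (so $p' = \infty$) must be checked to fit the same argument. Third, the step producing \eqref{eq: duality 1} tacitly uses that the supremum in $M^j_0 f$ is attained up to a factor of $2$ at some $(x_{Q_j}, r_{Q_j})$ with $M^j_0 f(x) \le 2 M^j_0 f(x_{Q_j})$ on $Q_j$. That step is taken as given from the preceding discussion.
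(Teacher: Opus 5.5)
Your proposal is correct and is essentially the argument the paper has in mind. You start from \eqref{eq: duality 1}, dualise the $\ell^p$ sum against a normalised $\ell^{p'}$ sequence, move the finite sum inside the integral, apply H\"older, and then invoke \eqref{eq: duality} uniformly over the $f$-dependent family $\cC$, so that $\delta^{d/p-1}\cdot\delta^{1-d/p+\varepsilon(p)}=\delta^{\varepsilon(p)}=2^{-j\varepsilon(p)}$; the paper leaves exactly this C\'ordoba-style step as a straightforward exercise, and your treatment, including the $p=1$ endpoint, fills it in correctly.
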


Given \eqref{eq: duality 1}, the proof of Lemma~\ref{lem: duality} is a straightforward exercise. We remark that such duality arguments date back to C\'ordoba~\cite{Cordoba1977} and are standard within the geometric maximal function literature; see also \cite{Carbery1988} or \cite[Proposition 22.4]{Mattila_book} and \cite[Proposition 22.6]{Mattila_book} for an exposition of the argument in the Kakeya case, which easily adapts to the present setting.




\subsection{Comparison with geometric arguments}\label{subsec: geom comparison} We end this section by briefly describing purely geometric counterparts of the above arguments, which do not rely on Fourier analysis. The material in this subsection is not needed for our proof of the circular maximal theorem. We nevertheless include it, since the purely geometric perspective sets up an effective foil for elucidating the role of the Fourier transform in our later arguments.\medskip

\noindent \textit{Mollification}. An alternative, arguably more direct, approach to mollifying the operator is to simply note
\begin{equation*}
    Af(x, r) = \lim_{\delta \rightarrow 0_+} \cA^{\delta}f(x, r) \qquad \textrm{where} \qquad \cA^{\delta}f(x, r) \coloneq \frac{1}{|C^{\delta}(x,r)|}\int_{C^\delta (x,r)} f
\end{equation*}
for $C^{\delta}(x,r)$ as in Definition~\ref{dfn: annulus}. Note that the integration in  $\cA^{\delta}f$ now takes place over a $d$-dimensional set.\medskip

\noindent \textit{The local maximal operator}. For $0 < \delta \leq 1/2$, we introduce the \textit{local} maximal operators
\begin{equation*}
\cM_0^{\delta} f(x) \coloneq \sup_{1 \leq r \leq 2} |\cA^{\delta} f(x, r)|.
\end{equation*}
Our goal now is to prove estimates of the form
\begin{equation}\label{eq: geom loc}
    \|\cM_0^{\delta} f\|_{L^p(\R^d)} \lesssim \|f\|_{L^p(\R^d)},
\end{equation}
with a constant that is uniform in $0 < \delta \leq 1/2$. Once \eqref{eq: geom loc} is established, a simple limiting argument implies the corresponding $L^p$-boundedness of $M_0$, where $M_0$ is the local maximal operator defined in \eqref{eq: loc max}. Here we only consider local maximal operators. This is because, in contrast with the Fourier analytic approach, there is no direct mechanism to combine bounds for the local operator $\cM_0^{\delta}$ to bound the global operator $\cM$.

The key difference between \eqref{eq: geom loc} and its counterpart \eqref{eq: loc to glob a} in the Fourier analytic approach is that here we no longer require the extra decay factor $2^{-j\varepsilon(p)} = \delta^{\varepsilon(p)}$ in our estimates. This is because we are now interested in taking a limit in the $\delta$ parameter, rather than summing a series in the $j$ parameter. Indeed, \eqref{eq: geom loc} cannot possibly hold with an additional $\delta^{\varepsilon(p)}$ factor on the right-hand side, for $\varepsilon(p) > 0$, since otherwise taking the limit would force $\cM_0 f$ to vanish almost everywhere for every choice of $f \in C_c(\R^d)$. \medskip

\noindent \textit{Discretisation, linearisation and duality}. By using parallel arguments, we can establish the following geometric analogue of the duality described in Lemma~\ref{lem: duality}.

\begin{lemma}[Geometric duality]\label{lem: geom duality}
    To prove the local maximal bound \eqref{eq: geom loc} for some fixed $1 \leq p < \infty$, it suffices to show
    \begin{equation}\label{eq: geom duality}
        \NormLpRd{\sum_{C \in \cC} a_C \chi_{C^\delta}}{p'}{d} \lesssim \delta^{1-d/p} \Brac{ \sum_{C \in \cC} |a_C|^{p'}}^{1/p'}
    \end{equation}
    holds for all $0 < \delta \leq 1/2$, any $\delta$-separated set $\cC$ of unit scale spheres and $(a_C)_{C \in \cC}$ any complex sequence. 
\end{lemma}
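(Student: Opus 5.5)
We mirror the derivation of Lemma~\ref{lem: duality}, replacing the oscillatory kernels $\Chi_{C^\delta}$ with the characteristic functions $\chi_{C^\delta}$. Throughout, fix $0 < \delta \leq 1/2$ and $1 \leq p < \infty$.

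\textbf{Step 1: localisation.} First we reduce to the estimate $\|\cM_0^\delta f\|_{L^p(\bbB^d)} \lesssim \|f\|_{L^p(\R^d)}$, uniformly in $\delta$. This reduction is easier than Lemma~\ref{lem: loc op red}, because the kernel of $\cA^\delta f(x,r)$ is genuinely compactly supported. If $x \in B$ for a ball $B \in \cB$ of radius $1/2$, then $C^\delta(x,r) \subseteq B^*$ for every $r \in [1,2]$. Hence $\cM_0^\delta f = \cM_0^\delta(\chi_{B^*} f)$ on $B$. Summing over a boundedly overlapping cover and using translation invariance gives the global bound, with no tail term to control.

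\textbf{Step 2: discretisation.} Next we discretise. We may assume $f \geq 0$, since $|\cA^\delta f| \leq \cA^\delta |f|$. Since no uncertainty principle is available, we need a substitute for the local constancy of $|A^jf|$. For this we use the elementary observation that if $|x - x'| \leq \delta$, then $C^\delta(x,r) \subseteq C^{2\delta}(x',r)$, and also $|C^{2\delta}| \sim |C^\delta| \sim \delta$. Cover $\bbB^d$ by the cubes $\cQ$ of side $\delta$ (we may take $\delta$ dyadic without loss, since $\cA^\delta f \lesssim \cA^{\delta'} f$ pointwise for $f \geq 0$ and $\delta \leq \delta' \leq 2\delta$). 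For each $Q \in \cQ$, choose $x_Q \in Q$ and $r_Q \in [1,2]$ with
\[
\sup_{x \in Q} \cM_0^\delta f(x) \leq 2\, \cA^\delta f(x_Q, r_Q) \cdot O(1),
\]
understood as a near-supremum over $(x,r) \in Q \times [1,2]$. This yields
\[
\cM_0^\delta f(x) \lesssim \delta^{-1} \int f\, \chi_{C^{2\delta}(x_Q,r_Q)} \qquad \text{for all } x \in Q.
\]
Summing over cubes then gives
\[
\|\cM_0^\delta f\|_{L^p(\bbB^d)} \lesssim \delta^{d/p-1} \Big( \sum_Q \Big| \int f\, \chi_{C^{2\delta}_Q} \Big|^p \Big)^{1/p}.
\]
The sets $C^{2\delta}$ are covered by $O(1)$ sets of the form $C^{\delta}(x, r + k\delta)$ with $|k| \leq 2$, so we may replace $2\delta$ by $\delta$, after adjusting the radii slightly so that they stay in a fixed compact interval (or after absorbing the resulting $O(1)$ losses). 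Finally, we split the centres into $O(1)$ families, each of which is $\delta$-separated, and pigeonhole to obtain a single $\delta$-separated family $\cC$ of unit scale spheres.

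\textbf{Step 3: duality.} By $\ell^p$--$\ell^{p'}$ duality, there is a sequence $(a_C)$ with $\|a\|_{\ell^{p'}} = 1$ such that
\[
\Big( \sum_{C} \Big| \int f \chi_{C^\delta} \Big|^p \Big)^{1/p} = \int f \sum_C a_C \chi_{C^\delta}.
\]
Hölder's inequality then bounds the right-hand side by $\|f\|_p \, \|\sum_C a_C \chi_{C^\delta}\|_{p'}$. Inserting the hypothesis \eqref{eq: geom duality}, the factor $\delta^{d/p-1}$ from Step 2 cancels exactly against $\delta^{1-d/p}$, which gives the uniform bound.

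\textbf{Main obstacle.} The only delicate point is the discretisation in Step 2. In Lemma~\ref{lem: duality} this step was justified by Fourier localisation. Here it must instead come from the inclusion of annuli under perturbation of the centre. We also need to keep track of the bookkeeping that moves between widths $2\delta$ and $\delta$ and keeps the radii admissible: for example, we use radii in $[1,2]$ and allow a harmless enlargement to $[1/2,3]$, which the argument tolerates by rescaling.
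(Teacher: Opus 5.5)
Your proposal is correct and follows the route the paper indicates when it says this lemma follows by ``parallel arguments'' to Lemma~\ref{lem: duality}: localise to $\bbB^d$ (with no tail term, since the annuli lie in $B^*$), pick near-maximisers over $\delta$-cubes, pigeonhole into $O(1)$ $\delta$-separated families, and finish with $\ell^p$--$\ell^{p'}$ duality and H\"older. One simplification: since your near-supremum is taken over $(Q\cap\bbB^d)\times[1,2]$ (choose $x_Q\in Q\cap\bbB^d$ so the spheres are unit scale), it already gives $\cM_0^\delta f\lesssim\delta^{-1}\int f\,\chi_{C^{\delta}(x_Q,r_Q)}$ on $Q$, so the detour through $C^{2\delta}$ and the radius adjustment or rescaling is unnecessary.
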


Once again, the key difference between Lemma~\ref{lem: geom duality} and its Fourier analytic counterpart Lemma~\ref{lem: duality} is that here we no longer require the extra decay factor $\delta^{\varepsilon(p)}$. Using the essential support properties of the $\Chi_{C^{\delta}}$, it is not difficult to show that any bound \eqref{eq: geom duality} implies \eqref{eq: duality} with $\varepsilon(p) = 0$: that is, \eqref{eq: geom duality} implies the same estimate but with the $\chi_{C^{\delta}}$ replaced with the oscillatory functions $\Chi_{C^{\delta}}$. In order to prove \eqref{eq: duality} with some $\varepsilon(p) > 0$, however, one must exploit additional cancellation due to the oscillation of the $\Chi_{C^{\delta}}$.




\section{$L^2$ theory and Stein's theorem}\label{sec: Stein}




\subsection{Frequency localised $L^2$ bound} Here we shall carefully examine the oscillatory behaviour of the $\Chi_{C^{\delta}}$ in order to prove $L^2$ estimates for the frequency localised maximal function. Defining the exponent 
\begin{equation}\label{eq: L2 exp}
\varepsilon_d(2) \coloneq \frac{d}{2}-1,    
\end{equation}
the main result of this section reads thus.  

\begin{proposition}[Frequency localised $L^2$ bound]\label{prop:steinlpdualp2}
    For all $d \geq 2$, the inequality
    \begin{equation}
        \NormLpRd{\sum_{C \in \cC} a_C \Chi_{C^\delta}}{2}{d} \lesssim \delta^{1-d/2 + \varepsilon_d(2)} \Brac{ \sum_{C \in \cC} |a_C|^{2}}^{1/2} \label{eq:steinlpdualp2} 
    \end{equation}
    holds for all $0 < \delta \leq 1/2$ dyadic, $\cC$ any $\delta$-separated set of unit scale spheres and $(a_C)_{C \in \cC}$ any complex sequence. 
\end{proposition}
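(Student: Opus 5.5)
The key observation is that the exponent on the right of \eqref{eq:steinlpdualp2} is $1 - d/2 + \varepsilon_d(2) = 0$. So the task is to show that the operator $T\colon (a_C)_{C \in \cC} \mapsto \sum_{C \in \cC} a_C \Chi_{C^\delta}$ is bounded from $\ell^2(\cC)$ to $L^2(\R^d)$ with norm $O(1)$. I would not expand $\|Ta\|_2^2$ and run a Schur test on the Gram entries $\inn{\Chi_{C^\delta}}{\Chi_{C'^\delta}}$. Those entries involve near-tangency of pairs of spheres and have no clean summable decay in $|x_C - x_{C'}|$. Instead I would pass to the adjoint,
\[
T^*f = \Big(\int_{\R^d} f\,\overline{\Chi_{C^\delta}}\Big)_{C\in\cC},
\]
and prove $\|T^*f\|_{\ell^2} \lesssim \|f\|_{L^2}$. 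By \eqref{eq: Chi in action}, applied to $\bar f$, each entry of $T^*f$ is $\delta$ times the complex conjugate of $A^j\bar f(x_C,r_C)$, with $\delta = 2^{-j}$. Hence it suffices (replacing $f$ by $\bar f$) to show
\[
\delta^2 \sum_{C\in\cC} |A^jf(x_C,r_C)|^2 \lesssim \|f\|_{L^2(\R^d)}^2 .
\]
This is a Stein-type $L^2$ estimate for the frequency-localised local maximal function, sampled on $\delta$-separated centres.

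\emph{Step 1 (Fourier bounds in $r$).} By Plancherel, $\hat\sigma$ decay $|\hat\sigma(\xi)|\lesssim (1+|\xi|)^{-(d-1)/2}$, and the support of $\beta_j(r\,\cdot)$ in $|\xi|\sim \delta^{-1}$ for $r \in [1,2]$, I would show
\[
\sup_{r\in[1,2]}\|A^jf(\cdot,r)\|_{L^2} \lesssim \delta^{(d-1)/2}\|f\|_2, \qquad \sup_{r\in[1,2]}\|\partial_rA^jf(\cdot,r)\|_{L^2} \lesssim \delta^{(d-1)/2-1}\|f\|_2 .
\]
The second bound holds because $\partial_r$ of the multiplier $\beta_j(r\xi)\hat\sigma(r\xi)$ costs a factor $|\xi|\sim\delta^{-1}$, using the symbol-type bound $|\nabla\hat\sigma(\xi)|\lesssim(1+|\xi|)^{-(d-1)/2}$.

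\emph{Step 2 (Sobolev in $r$).} For each $x$, the fundamental theorem of calculus gives
\[
\sup_{1\le r\le2}|F(r)|^2 \le |F(1)|^2 + 2\int_1^2 |F||\partial_rF|\,\ud r, \qquad F(r) \coloneq A^jf(x,r).
\]
Integrating in $x$ and applying Cauchy--Schwarz with Step 1 yields
\[
\|M^j_0 f\|_{L^2}^2 \lesssim \delta^{d-1}\|f\|_2^2 + \delta^{(d-1)/2}\,\delta^{(d-1)/2-1}\|f\|_2^2 \lesssim \delta^{d-2}\|f\|_2^2 .
\]

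\emph{Step 3 (discretisation), the main obstacle.} I need to pass from the sampled values $|A^jf(x_C,r_C)|$ to $\delta^{-d}\int_{B(x_C,\delta)}$ of a maximal quantity. Pointwise $|A^jf(x_C,r_C)|\le M^j_0f(x)$ is false for $x$ merely near $x_C$. I would handle this with the uncertainty principle. Since $A^jf(\cdot,r)$ has Fourier support in $|\xi|\lesssim\delta^{-1}$, write $A^jf(\cdot,r) = A^jf(\cdot,r)*\psi_\delta$ with $\psi_\delta$ an $L^1$-normalised Schwartz function at scale $\delta$. This gives
\[
|A^jf(x_C,r_C)| \le \int |A^jf(y,r_C)|\,|\psi_\delta(x_C-y)|\,\ud y \le \int M^j_0 f(y)\,|\psi_\delta(x_C-y)|\,\ud y .
\]
Applying Cauchy--Schwarz against the $L^1$-normalised weight, summing over the $\delta$-separated centres, and using $\sum_C|\psi_\delta(x_C-y)| \lesssim \delta^{-d}$, I obtain
\[
\sum_C|A^jf(x_C,r_C)|^2\lesssim \delta^{-d}\|M^j_0f\|_2^2 .
\]
Here $M^j_0$ is taken over all of $\R^d$, so the tails outside $\bbB^d$ cause no trouble.

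Combining Steps 2 and 3 gives
\[
\delta^2\sum_C|A^jf(x_C,r_C)|^2\lesssim \delta^{2-d}\,\delta^{d-2}\|f\|_2^2=\|f\|_2^2,
\]
which is the required bound on $T^*$ and hence proves \eqref{eq:steinlpdualp2}.
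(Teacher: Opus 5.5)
Your proof is correct, but it runs in the opposite direction to the paper's. You first prove the maximal bound $\|M^j_0 f\|_{L^2(\R^d)} \lesssim \delta^{(d-2)/2}\|f\|_{L^2(\R^d)}$ directly, by the classical Sobolev embedding in $r$, which uses only the decay of $\hat\sigma$ and $\nabla\hat\sigma$. You then run the duality of Lemma~\ref{lem: duality} in reverse: the locally constant property of $A^jf(\cdot,r)$ at scale $\delta$, together with the bound $\sum_C |\psi_\delta(x_C - y)| \lesssim \delta^{-d}$ from $\delta$-separation, controls the sampled values. Each step checks out.

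The paper instead stays entirely on the dual side. It splits the radii into $K \sim \delta^{-1}$ intervals of length $\delta/2$ and pays $\delta^{-1/2}$ by Cauchy--Schwarz. Within each radius class, $\tang{C_1}{C_2} \sim \dist(C_1,C_2)$, so the weak orthogonality Lemma~\ref{lem:weakorthogonality} gives rapidly decaying Gram entries, and a Schur-type summation yields $\delta^{1/2}$ per class. So your objection to expanding the Gram matrix only applies before the radii are localised.

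The two proofs have the same structure. The per-class $\delta^{1/2}$ is exactly what your fixed-$r$ Plancherel bound gives after discretisation. The $\delta^{-1/2}$ loss from the radius partition mirrors the $\delta^{-1}$ cost of $\partial_r$, which enters under a square root in your Step 2.

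Your route is shorter and needs less input: no stationary phase for pairs and no tangency parameter. It also shows that at $L^2$ the maximal and dual discretised formulations are equivalent. What the paper's route buys is pairwise information. The Sobolev argument bounds all linearisations at once and is blind to the configuration of $\cC$. The Gram-matrix formalism instead isolates near-tangent pairs as the only source of loss when $d = 2$. Precisely this weak orthogonality machinery is reused in Lemma~\ref{lem:transversecase} and in the Claim of Step 7, where it extracts the $\delta^{\varepsilon_{\circ}}$ gain for families with few tangencies.
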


This bound is essentially contained in Stein's spherical maximal paper \cite{Stein1976}, albeit in a rather different guise. However, our proof of Proposition~\ref{prop:steinlpdualp2} involves somewhat different methods from those featured in \cite{Stein1976}. 

For $d \geq 3$, we have $\varepsilon_d(2) > 0$ and so Proposition~\ref{prop:steinlpdualp2} can be combined with Lemma~\ref{lem: duality}, Lemma~\ref{lem: loc op red} and Lemma~\ref{lem: loc to glob} to deduce the $L^2$ boundedness of the spherical maximal operator. For $d = 2$, however, we have $\varepsilon_2(2) = 0$ and so the frequency localised pieces fail to be summable (as we should expect, since we know from \S\ref{sec: nec} that the circular maximal function is not $L^2$ bounded). Nevertheless, Proposition~\ref{prop:steinlpdualp2} is, in some sense, very close to proving $L^2$ boundedness of the circular maximal theorem: the condition $\varepsilon_2(2) = 0$ means that summability only just fails. Indeed, we shall see that Proposition~\ref{prop:steinlpdualp2} forms the foundation for Bourgain's proof of Theorem~\ref{thm: Bourgain}. 

Proposition~\ref{prop:steinlpdualp2} rests on certain orthogonality properties of the $\Chi_{C^\delta}$. To state these properties, we introduce the following definition.

\begin{definition} 
For $(x_i, r_i) \in \R^d \times (0,\infty)$ and $C_i \coloneq C(x_i, r_i)$ for $i = 1$, $2$, define
\begin{equation*}
 \dist(C_1, C_2) \coloneq  |x_1 - x_2| \qquad \textrm{and} \qquad  \tang{C_1}{C_2} \coloneq ||x_1-x_2|-|r_1-r_2||.
\end{equation*}
\end{definition}

Note that $\tang{C_1}{C_2} = 0$ if and only if $C_1$ and $C_2$ are interior tangent. Thus, in general, $\tang{C_1}{C_2}$ can be thought of as a measure of how close $C_1$ and $C_2$ are to being interior tangency (see also Figure~\ref{fig: omega} below). 

\begin{lemma}[Weak orthogonality]\label{lem:weakorthogonality} Fix $C_1$, $C_2 \subset \R^d$ unit scale spheres and $0 < \delta \leq 1/2$ dyadic. For all $N\in\N_0$, we have
\begin{equation}
    |\inn{\Chi_{C_1^\delta}}{\Chi_{C_2^\delta}}| \lesssim_N \delta \cdot (1 + \delta^{-1} \dist\big(C_1, C_2)\big)^{-(d-1)/2} \big(1+\delta^{-1} \tang{C_1}{C_2}\big)^{-N}. \label{eq: w orthog}
\end{equation}
\end{lemma}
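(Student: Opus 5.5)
We will compute the inner product on the Fourier side. Writing $C_i = C(x_i,r_i)$ and $\delta = 2^{-j}$, Definition~\ref{dfn: Chi} gives $\widehat{\Chi_{C_i^\delta}}(\xi) = \delta\, e^{-2\pi i x_i\cdot\xi}\,\beta_j(r_i\xi)\,\hat\sigma(r_i\xi)$, up to a harmless reflection. By Plancherel,
\begin{equation*}
\inn{\Chi_{C_1^\delta}}{\Chi_{C_2^\delta}} = \delta^2\int_{\widehat{\R}^d} e^{2\pi i (x_2-x_1)\cdot\xi}\,\beta_j(r_1\xi)\overline{\beta_j(r_2\xi)}\,\hat\sigma(r_1\xi)\overline{\hat\sigma(r_2\xi)}\,\ud\xi.
\end{equation*}
The integrand is supported where $|\xi|\sim\delta^{-1}$. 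We insert the classical stationary phase asymptotics $\hat\sigma(\xi) = \sum_{\pm} e^{\pm 2\pi i|\xi|} a_\pm(\xi)$, where $a_\pm$ are symbols of order $-(d-1)/2$. The product then splits into four terms with radial phases $e^{\pm 2\pi i r_1|\xi|}e^{\mp' 2\pi i r_2|\xi|}$, and the combined amplitude is a symbol of order $-(d-1)$ localised to $|\xi|\sim\delta^{-1}$.

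Next we pass to polar coordinates $\xi = \rho\theta$, so $\rho\sim\delta^{-1}$ and $\theta\in S^{d-1}$. The angular integral of $e^{2\pi i\rho (x_2-x_1)\cdot\theta}$ against the smooth angular amplitude is itself (essentially) $\hat\sigma$ evaluated at $\rho(x_2-x_1)$. It therefore expands again as $\sum_\pm e^{\pm 2\pi i\rho|x_1-x_2|} b_\pm$, with $|b_\pm|\lesssim (1+\rho|x_1-x_2|)^{-(d-1)/2}\sim(1+\delta^{-1}\dist(C_1,C_2))^{-(d-1)/2}$. When $|x_1-x_2|\lesssim\delta$ we do not expand, and simply use the trivial bound on this factor. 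We are left with one-dimensional integrals of the form
\begin{equation*}
\delta^2\int \rho^{d-1}\,e^{2\pi i\rho(\pm r_1\pm r_2\pm|x_1-x_2|)}\,c(\rho)\,\ud\rho,
\end{equation*}
where $c$ is a symbol of order $-(d-1)-(d-1)/2$ supported at $\rho\sim\delta^{-1}$. The crude size bound is $\delta^2\cdot\delta^{-d}\cdot\delta^{d-1}\cdot(1+\delta^{-1}\dist)^{-(d-1)/2} = \delta\,(1+\delta^{-1}\dist)^{-(d-1)/2}$, which is the claimed main factor.

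To obtain the extra factor $(1+\delta^{-1}\tang{C_1}{C_2})^{-N}$, we integrate by parts $N$ times in $\rho$. Each integration by parts gains $(\delta^{-1}|\phi|)^{-1}$, where $\phi$ is the frequency of the relevant phase. The possible phases are $\pm r_1\pm r_2\pm|x_1-x_2|$. By Remark~\ref{rmk: no ext tang}, any combination containing $r_1+r_2$ has size $\geq 1$, so those terms are $O(\delta^N)$. The remaining combinations are $r_1-r_2\pm|x_1-x_2|$, and each satisfies $|r_1-r_2\pm|x_1-x_2||\geq \tang{C_1}{C_2}$. This yields the decay factor.

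The main obstacle is making the double stationary phase expansion uniform and rigorous. Two issues arise. First, the angular integral involves the product of the two amplitudes, which depend on $\theta$, so it is not literally $\hat\sigma$. One has to treat it as a general oscillatory integral over $S^{d-1}$ with phase $\theta\mapsto\rho(x_2-x_1)\cdot\theta$, whose only critical points are $\pm$ the unit vector along $x_2-x_1$. Second, the symbol estimates must be uniform when $\rho|x_1-x_2|$ is small. We plan to separate the regime $|x_1-x_2|\leq\delta$, where the trivial bound suffices, from the regime $|x_1-x_2|>\delta$. In the latter regime we invoke the standard stationary phase lemma on the sphere with symbol bounds uniform in the parameters, so that $\rho$-derivatives of $b_\pm$ cost only a factor of $\rho^{-1}\sim\delta$, as the integration by parts requires.
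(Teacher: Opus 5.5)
Your argument is correct, and it takes a genuinely different route from the paper's.

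\textbf{The paper's route.} It expands only the two factors $\hat\sigma(r_i\xi)$ and keeps the $d$-dimensional phase $(x_2-x_1)\cdot\xi+(\nu_1r_1-\nu_2r_2)|\xi|$.
\begin{enumerate}
\item The factor $(1+\delta^{-1}\tang{C_1}{C_2})^{-N}$ comes from the lower bound $|\nabla_\xi\phi|\geq\inf_{\omega}|x_1-x_2-(\nu_1r_1-\nu_2r_2)\omega|$ and integration by parts in $\xi$.
\item The $(d-1)/2$ power comes from the rank-$(d-1)$ Hessian, of size $|\nu_1r_1-\nu_2r_2|$, via van der Corput. This gives decay in $|r_1-r_2|$ rather than in $\dist(C_1,C_2)$.
\item A preliminary reduction using $\dist(C_1,C_2)\leq\tang{C_1}{C_2}+|r_1-r_2|$ then converts this into decay in $\dist(C_1,C_2)$.
\end{enumerate}

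\textbf{Your route.} You take the $(d-1)/2$ power from the curvature of the angular sphere, which gives $\dist(C_1,C_2)$ directly. You take the tangency decay from one-dimensional integration by parts against the explicit frequencies $\pm r_1\pm r_2\pm|x_1-x_2|$, all of which have modulus at least $\tang{C_1}{C_2}$.

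\textbf{The obstacle you flag largely disappears.} Both $\beta$ and $\hat\sigma$ are radial. Let $m(\rho)$ be the value of $\beta(r_1\delta\xi)\beta(r_2\delta\xi)\hat\sigma(r_1\xi)\overline{\hat\sigma(r_2\xi)}$ at $|\xi|=\rho$. Then, exactly,
\[
\inn{\Chi_{C_1^\delta}}{\Chi_{C_2^\delta}}=|S^{d-1}|\,\delta^2\int_0^\infty\rho^{d-1}m(\rho)\,\hat\sigma\big(\rho(x_1-x_2)\big)\,\ud\rho .
\]
The symbols $a_\pm$ may be taken radial (average over rotations). With $D=|x_1-x_2|$, the bounds you need, $|\partial_\rho^k[a_\pm(\rho D)]|\lesssim\rho^{-k}(1+\rho D)^{-(d-1)/2}$, follow from the chain rule. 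So no stationary phase on $S^{d-1}$ with a $\theta$-dependent amplitude is required.

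\textbf{The regime $D\leq\delta$.} Make explicit that $\hat\sigma(\rho(x_1-x_2))$ stays in the amplitude; its $\rho$-derivatives are $O(D^k)=O(\delta^k)$. The remaining phases are then $\pm r_1\pm r_2$. The $r_1+r_2$ terms are handled by your unit-scale observation. For the $r_1-r_2$ terms use $|r_1-r_2|\geq\tang{C_1}{C_2}-\delta$.

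\textbf{What each route buys.} Yours is more elementary: only one-variable integrations by parts, no multi-variable van der Corput with a degenerate Hessian, and no reduction from $|r_1-r_2|$ to $\dist(C_1,C_2)$. However, it relies entirely on exact radial symmetry. The paper's gradient lower bound does not, and it is stable under perturbation. That stability is exactly what the paper reuses in the Claim of Step~7, where the weight $u_j$ breaks radial symmetry and your polar-coordinate reduction is unavailable.
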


\begin{remark} For Lemma~\ref{lem:weakorthogonality} to hold, it is important that we work with \textit{unit scale} spheres, which rules out the possibility that the spheres are externally tangent: see Remark~\ref{rmk: no ext tang}. 
\end{remark}

Bounds of this form have appeared in, for instance, \cite[Lemma 3.2]{Cladek2018} and also \cite[Lemma 3.3]{HNS2011}. Lemma~\ref{lem:weakorthogonality} tells us that $\inn{\Chi_{C_1^\delta}}{\Chi_{C_2^\delta}}$ is essentially negligible unless $C_1$ and $C_2$ are almost tangent, in the sense that $\tang{C_1}{C_2} \lesssim \delta$. This property heavily exploits the oscillation of the $\Chi_{C^\delta}$, and is not merely a consequence of the essential support properties described in \S\ref{sec: osc kernels}. We postpone the proof of Lemma~\ref{lem:weakorthogonality} until \S\ref{subsec: w orthog} below; presently, we observe how Lemma~\ref{lem:weakorthogonality} can be used to establish Proposition~\ref{prop:steinlpdualp2}.

\begin{proof}[Proof (of Proposition~\ref{prop:steinlpdualp2})] Let $(I_k)_{k=1}^K$ be a partition of $[1,2]$ into $K \sim \delta^{-1}$ intervals of length at most $\delta/2$ and define
\begin{equation*}
    \cC_k \coloneq \big\{ C\in\cC : C = C(x,r) \textrm{ for some }  (x,r) \in \bbB^d \times I_k \big\} \qquad \textrm{for $1 \leq k \leq K$.}
\end{equation*}
By the Cauchy--Schwarz inequality,
    \begin{equation}
        \NormLpRd{\sum_{C \in \cC} a_C\Chi_{C^\delta}}{2}{d}
        \lesssim \delta^{-1/2} \Brac{\sum_{k=1}^K \NormLpRd{\sum_{C \in \cC_k} a_C\Chi_{C^\delta}}{2}{d}^2 \, }^{1/2}. \label{eq:l2freqlocproof2}
    \end{equation}

    Fix $1 \leq k \leq K$ and write
    \begin{equation}\label{eq:l2freqlocproof3}
        \NormLpRd{\sum_{C \in \cC_k} a_C \Chi_{C^\delta}}{2}{d}^2 = \sum_{C \in \cC_k} |a_C|^2 \normLpRd{ \Chi_{C^\delta}}{2}{d}^2 + \sum_{\substack{C_1,C_2 \in \cC_k \\ C_1 \neq C_2}} a_{C_1} \overline{a_{C_2}}\inn{\Chi_{C_1^\delta}}{\Chi_{C_2^\delta}}.
    \end{equation}
    Using Lemma~\ref{lem: Lp Chi}, we may immediately bound the diagonal contribution by 
    \begin{equation}\label{eq:l2freqlocproof4}
        \sum_{C \in \cC_k} |a_C|^2 \normLpRd{ \Chi_{C^\delta}}{2}{d}^2 \lesssim \delta \sum_{C \in \cC_k} |a_C|^2.
    \end{equation}

    For the off-diagonal contribution, suppose $C_i = C(x_i, r_i) \in \cC_k$ for $i = 1$, $2$ satisfy $C_1 \neq C_2$. Then $|r_1 - r_2| \leq \delta/2$ and, by $\delta$-separation, $|x_1 - x_2| \geq \delta$, which implies 
    \begin{equation*}
       \dist(C_1,C_2) \sim \tang{C_1}{C_2}. 
    \end{equation*}
     Hence, in this case, Lemma~\ref{lem:weakorthogonality} implies
    \begin{equation}
    |\inn{\Chi_{C_1^\delta}}{\Chi_{C_2^\delta}}| \lesssim \delta \cdot (1+\delta^{-1} \dist(C_1,C_2))^{-(d+1)}.
        \label{eq:l2freqlocproof1}
    \end{equation}
    This observation motivates the following decomposition. Given $C_1 \in \cC_k$, define
    \begin{align*}
        \cC_{k,\ell}(C_1) \coloneq \big\{ C_2 \in \cC_k : 2^\ell \delta \leq \dist (C_1, C_2) < 2^{\ell + 1} \delta \big\} \qquad \textrm{for $\ell \in \N_0$.}
    \end{align*}
    Since $\cC$ is $\delta$-separated, these sets form a partition of $\cC_k \setminus \{C_1\}$.  It then follows from \eqref{eq:l2freqlocproof1} that
    \begin{align}
        \sum_{\substack{C_1,C_2 \in \cC_k \\ C_1 \neq C_2}} a_{C_1} \overline{a_{C_2}} \inn{\Chi_{C_1^\delta}}{\Chi_{C_2^\delta}}
        &\lesssim \delta\sum_{\ell = 0}^{\infty} \sum_{\substack{C_1 \in \cC_k \\ C_2 \in \cC_{k,\ell} (C_1)}} |a_{C_1}||a_{C_2}| (1+\delta^{-1} \dist(C_1,C_2))^{-(d+1)} 
        \nonumber \\
        &\lesssim  \delta\sum_{\ell = 0}^{\infty} 2^{-(d+1)\ell} \sum_{C_1 \in \cC_k}|a_{C_1}| \sum_{C_2 \in \cC_{k,\ell} (C_1)} |a_{C_2}| .
        \label{eq:l2freqlocproof5}
    \end{align}
    Again using the fact that $\cC$ is $\delta$-separated,  $ \#\cC_{k,\ell}(C_1) \lesssim 2^{\ell d}$ for all $C_1 \in \cC_k$ and $\ell \in \N_0$. Thus, by two applications of the Cauchy--Schwarz inequality,
    \begin{equation}\label{eq:l2freqlocproof6}
        \sum_{C_1 \in \cC_k}|a_{C_1}| \sum_{C_2 \in \cC_{k,\ell} (C_1)} |a_{C_2}| \lesssim 2^{\ell d} \sum_{C \in \cC_k}|a_C|^2.
    \end{equation}
    Combining \eqref{eq:l2freqlocproof5} and \eqref{eq:l2freqlocproof6}, and summing the resulting geometric series
    \begin{equation}\label{eq:l2freqlocproof7}
        \sum_{\substack{C_1,C_2 \in \cC_k \\ C_1 \neq C_2}} a_{C_1} \overline{a_{C_2}} \inn{\Chi_{C_1^\delta}}{\Chi_{C_2^\delta}}
        \lesssim \delta \sum_{C \in \cC_k}|a_C|^2.
    \end{equation}

    By combining \eqref{eq:l2freqlocproof3}, \eqref{eq:l2freqlocproof4} and \eqref{eq:l2freqlocproof7}, we obtain 
    \begin{equation*}
         \NormLpRd{\sum_{C \in \cC_k} a_C \Chi_{C^\delta}}{2}{d} \lesssim \delta^{1/2} \Brac{\sum_{C \in \cC_k}|a_C|^2}^{1/2} \qquad \textrm{for all $1 \leq k \leq K$.}
    \end{equation*}
   Taking the $\ell^2$ sum of the above estimate in $k$ and recalling \eqref{eq:l2freqlocproof2}, we conclude that
   \begin{equation*}
       \NormLpRd{\sum_{C \in \cC} a_C \Chi_{C^\delta}}{2}{d}
        \lesssim \Brac{\sum_{k=1}^K \sum_{C \in \cC_k}|a_C|^2 \, }^{1/2} \lesssim \Brac{\sum_{C \in \cC}|a_C|^2}^{1/2},
   \end{equation*}
   which is precisely the desired bound \eqref{eq:steinlpdualp2}.
\end{proof}




\subsection{Weak orthogonality via stationary phase}\label{subsec: w orthog} It remains to prove the weak orthogonality property from Lemma~\ref{lem:weakorthogonality}, which is based on oscillatory integral estimates. For $C = C(x,r)$ where $(x,r) \in \R^d \times (0,\infty)$ and $0 < \delta \leq 1/2$ dyadic, by taking the Fourier transform, 
    \begin{equation}\label{eq: Chi ft}
        (\Chi_{C^\delta}) \ft (\xi) = \delta \cdot e^{-2\pi i x \cdot \xi} \, \beta(r \delta \xi) \, \widehat{\sigma}(r\xi).
    \end{equation}
    Note $e^{-2\pi i x \cdot \xi} \, \widehat{\sigma}(r\xi)$ corresponds to the Fourier transform of the normalised measure on $C(x,r)$. The proof of Lemma~\ref{lem:weakorthogonality} relies on the stationary phase formula 
\begin{equation}\label{eq: stationary phase}
    e^{-2\pi i x \cdot \xi} \, \widehat{\sigma}(r\xi) = \sum_{ \nu \in \{-1, +1\}} e^{-2\pi i(x \cdot \xi - \nu r |\xi|)} a^{\nu}(r \xi) 
\end{equation}
where $a^{\nu} \in C^{\infty}(\widehat{\R}^d)$ are smooth symbols satisfying
\begin{equation}\label{eq: symbol bounds}
    |\partial_{\xi}^{\alpha} a^{\nu}(\xi)| \lesssim_{\alpha} \big(1 + |\xi|\big)^{-(d-1)/2 - |\alpha|} \quad \textrm{for $\alpha \in \N_0^d$ and $|\alpha| \coloneq \alpha_1 + \cdots + \alpha_d$.}
\end{equation}
see, for instance, \cite[Chapter VIII]{SteinBook} or \cite[Chapter 1]{SoggeBook}. 

\begin{proof}[Proof (of Lemma~\ref{lem:weakorthogonality})] Let $C_i = C(x_i, r_i)$ where $(x_i, r_i) \in \bbB^d \times [1,2]$ for $i = 1$, $2$. It suffices to show
\begin{equation}\label{eq: w orthog 1}
    |\inn{\Chi_{C_1^\delta}}{\Chi_{C_2^\delta}}| \lesssim_N \delta \cdot (1 + \delta^{-1} |r_1 - r_2|\big)^{-(d-1)/2} \big(1+\delta^{-1} \tang{C_1}{C_2}\big)^{-N}. 
\end{equation}
Indeed, if $\tang{C_1}{C_2} \geq \dist(C_1, C_2)/2$, then this implies \eqref{eq: w orthog} by exploiting the rapidly decaying term in \eqref{eq: w orthog 1}. On the other hand, by the triangle inequality, we have $\dist(C_1, C_2) \leq \tang{C_1}{C_2} + |r_1 - r_2|$. From this, if $\tang{C_1}{C_2} \leq \dist(C_1, C_2)/2$, then $|r_1 - r_2| \geq   \dist(C_1, C_2)/2$. Thus, \eqref{eq: w orthog 1} also implies \eqref{eq: w orthog} in this case.

Recalling \eqref{eq: Chi ft} and \eqref{eq: stationary phase}, by Plancherel's theorem and a change of variable
    \begin{equation}\label{eq: w orthog 2}
    \inn{\Chi_{C_1^\delta}}{\Chi_{C_2^\delta}} = \delta \sum_{\bdnu \in \{-1,+1\}^2}  I_{C_1, C_2}^{\delta, \bdnu}
    \end{equation}
    where
    \begin{equation*}
          I_{C_1, C_2}^{\delta, \bdnu}  \coloneq \int_{\widehat{\R}^d} e^{2\pi i \delta^{-1}\phi_{C_1, C_2}^{\bdnu}(\xi)} a_{C_1, C_2}^{\bdnu}(\xi; \delta^{-1}) \ud \xi
    \end{equation*}
for
\begin{align*}
 \phi_{C_1, C_2}^{\bdnu}(\xi) &\coloneq (x_2 - x_1) \cdot \xi + (\nu_1 r_1 - \nu_2 r_2)|\xi|, \\
 a_{C_1, C_2}^{\bdnu}(\xi; R) &\coloneq R^{(d-1)} \, a^{\nu_1}(r_1R \xi) \,  \overline{a^{\nu_2}(r_2 R \xi)} \,  \beta(r_1 \xi) \, \beta(r_2 \xi). 
\end{align*}
Given $R \geq 1$, using the properties of $\beta$ and \eqref{eq: symbol bounds}, one may show
\begin{equation*}
    \supp a_{C_1, C_2}^{\bdnu}(\,\cdot\,;R) \subseteq \bbA^d \coloneq \big\{ \xi \in \widehat{\R}^d : 1/4 \leq |\xi| \leq 2 \big\}
\end{equation*}
and
\begin{align}\label{eq: w orthog 3a}
   |\partial_{R}^N\partial_{\xi}^{\alpha}  a_{C_1, C_2}^{\bdnu}(\xi; R)| &\lesssim_{N, \alpha} R^{-N} \qquad \textrm{for all $N \in \N_0$, $\alpha \in \N_0^d$,} \\
   \label{eq: w orthog 3b}
   |\partial_{\xi}^{\alpha} \phi_{C_1, C_2}^{\bdnu}(\xi)| &\lesssim_{\alpha} 1 \qquad \textrm{for all $\alpha \in \N_0^d \setminus \{0\}$,}
\end{align}
where the second bound holds for all $\xi \in \supp a_{C_1, C_2}^{\bdnu}(\,\cdot\,;R)$. The argument now splits into two cases depending on the choice of $\bdnu$.\medskip

\noindent \underline{Case 1: $\nu_1 \nu_2 = -1$}.  Using the hypothesis that $C_1$, $C_2$ are unit scale circles,
\begin{equation*}
    |\nabla_{\xi} \phi_{C_1, C_2}^{\bdnu}(\xi)| \geq \inf_{\omega \in S^{d-1}} |x_1 - x_2 - (r_1 + r_2)\omega| \geq r_1 + r_2 - |x_1 - x_2| \geq 1 \gtrsim \tang{C_1}{C_2}. 
\end{equation*}

\noindent \underline{Case 2: $\nu_1 \nu_2 = 1$}. Here we have
\begin{equation*}
    |\nabla_{\xi} \phi_{C_1, C_2}^{\bdnu}(\xi)| \geq \inf_{\omega \in S^{d-1}} |x_1 - x_2 - (r_1 - r_2)\omega| = \tang{C_1}{C_2}.
\end{equation*}

In either case, we may now argue as follows. If $\tang{C_1}{C_2} \geq \delta$, for any $N \in \N_0$ we may apply repeated integration-by-parts, using \eqref{eq: w orthog 3a} and \eqref{eq: w orthog 3b}, to give
\begin{equation*}
    |I_{C_1, C_2}^{\delta, \bdnu}| \lesssim_N \big( 1+ \delta^{-1}\tang{C_1}{C_2}\big)^{-N}\Big|\int_{\widehat{\R}^d} e^{2\pi i \delta^{-1}\phi_{C_1, C_2}^{\bdnu}(\xi)} a_{C_1, C_2}^{\bdnu, N}(\xi; \delta^{-1}) \ud \xi\Big|,
\end{equation*}
for some choice of $a_{C_1, C_2}^{\bdnu, N} \in C^{\infty}_c(\widehat{\R}^d \times (0, \infty))$ with 
\begin{equation*}
   \supp a_{C_1, C_2}^{\bdnu, N} (\,\cdot\,; R) \subseteq \bbA^d \quad \textrm{and} \quad |\partial_{R}^M\partial_{\xi}^{\alpha} a_{C_1, C_2}^{\bdnu, N}(\xi; R)| \lesssim_{\alpha, N, M} R^{-M}
\end{equation*}
 for all $M \in \N_0$, $\alpha \in \N_0^d$ and $R \geq 1$. The Hessian of the phase satisfies 
\begin{equation*}
    \mathrm{Hess}(\phi_{C_1, C_2}^{\bdnu})(\xi) = (\nu_1r_1 - \nu_2r_2) H(\xi), \qquad \textrm{where} \qquad \operatorname{rank}H(\xi) = d-1. 
\end{equation*}
If $\nu_1 \nu_2 = -1$, then, since $C_1$, $C_2$ are unit scale, $|\nu_1r_1 - \nu_2r_2| = r_1 + r_2 \geq |r_1 - r_2|$. On the other hand, if $\nu_1 \nu_2 = 1$, then we immediately have $|\nu_1r_1 - \nu_2r_2| = |r_1 - r_2|$. Thus, in either case, by appropriately splitting up the domain of integration and applying van der Corput's lemma in $d-1$ variables (see, for instance, \cite[Theorem 1.1.4]{Sogge1991}), we obtain 
\begin{equation}\label{eq: w orthog 5}
    |I_{C_1, C_2}^{\delta, \bdnu}| \lesssim_N (1 + \delta^{-1} |r_1 - r_2|\big)^{-(d-1)/2}  \big( 1+ \delta^{-1}\tang{C_1}{C_2}\big)^{-N}, \qquad N \in \N_0,
\end{equation}
as required. The above bound continues to hold when $\tang{C_1}{C_2} \leq \delta$, simply by applying van der Corput directly without the initial repeated integration-by-parts.\medskip

Combining \eqref{eq: w orthog 2} and \eqref{eq: w orthog 5} concludes the proof. 
\end{proof}

\begin{remark} We give an additional heuristic justification for the 
\begin{equation}\label{eq: w orthog tang}
   \delta \cdot (1+\delta^{-1}\dist(C_1, C_2))^{-(d-1)/2} 
\end{equation}
factor in \eqref{eq: w orthog}. In the tangent case $\Delta(C_1, C_2) \leq \delta$, the right-hand side of \eqref{eq: w orthog} becomes \eqref{eq: w orthog tang}. Here our estimates are trivial from an oscillatory integral perspective, in the sense that they do not rely on cancellation. In particular, if we assume the (non-rigorous) heuristic $|\Chi_{C_j^{\delta}}| = \chi_{C_j^{\delta}}$, as discussed in \S\ref{sec: osc kernels}, then we may bound
\begin{equation*}
    |\inn{\Chi_{C_1^{\delta}}}{\Chi_{C_2^{\delta}}}| \leq \inn{\chi_{C_1^{\delta}}}{\chi_{C_2^{\delta}}} = |C_1^{\delta} \cap C_2^{\delta}|.
\end{equation*}
Since $C_1$, $C_2$ are close to tangent, $C_1^{\delta} \cap C_2^{\delta}$ is contained in the $\delta$-neighbourhood of a cap on $C_1$ of radius $O((1+\delta^{-1}\dist(C_1, C_2))^{-1/2})$; see Lemma~\ref{lem:intersectingannuli} for the $d= 2$ case of this assertion. The volume of such a neighbourhood is approximately \eqref{eq: w orthog tang}. 
\end{remark}



\subsection{A local spherical maximal theorem}\label{subsec: loc spherical}

We already observed that, for $d \geq 3$, Proposition~\ref{prop:steinlpdualp2} implies the $L^2$ boundedness of the spherical maximal operator. Here, as an aside, we note that Proposition~\ref{prop:steinlpdualp2} can further be used to prove the full range of boundedness for the local spherical maximal operator.

\begin{proposition}[Local spherical maximal theorem]\label{prop: loc Stein thm} For all $d \geq 3$ and $p > \frac{d}{d-1}$, we have
\begin{equation}\label{eq: loc Stein thm}
    \| M_0 f\|_{L^p(\R^d)} \lesssim_p \|f\|_{L^p(\R^d)} \qquad \textrm{for all $f \in C_c(\R^d)$.} 
\end{equation}
\end{proposition}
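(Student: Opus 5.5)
The plan is to interpolate the frequency-localised $L^2$ bound of Proposition~\ref{prop:steinlpdualp2} against an elementary $L^1$-type bound with loss, and then sum over $j$. We work with the dual formulation of Lemma~\ref{lem: duality} throughout, and pass back to the maximal operator via Lemma~\ref{lem: loc op red}.

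\emph{Step 1 (the $p=\infty$ endpoint).} This is the case $p=\infty$, $p'=1$ of \eqref{eq: duality}. By the triangle inequality and Lemma~\ref{lem: Lp Chi} with exponent $1$,
\begin{equation*}
\Big\|\sum_{C\in\cC} a_C\Chi_{C^\delta}\Big\|_{L^1(\R^d)} \lesssim \delta \sum_{C\in\cC}|a_C|.
\end{equation*}
This is \eqref{eq: duality} with $\varepsilon(\infty)=0$. Alternatively, one can note directly that $M^j_0 f\lesssim M_{\mathrm{HL}}f$ uniformly in $j$, which gives $\|M^j_0f\|_{L^\infty}\lesssim\|f\|_{L^\infty}$.

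\emph{Step 2 (the $p=1$ endpoint).} We need a bound for $M^j_0$ on $L^1$ with polynomial loss. Pointwise,
\begin{equation*}
M^j_0 f(x)\le \delta^{-1}\sup_{r\in[1,2]}\int |f(y)|\,|\Chi_{C^\delta(x,r)}(y)|\,\ud y.
\end{equation*}
Lemma~\ref{lem: Lp Chi} with exponent $\infty$ gives $\|\Chi\|_\infty\lesssim 1$, and Lemma~\ref{lem:essentialsupp} gives a decaying tail. Together these yield $M^j_0f(x)\lesssim \delta^{-1}(|f|*K)(x)$ for an integrable $K$. Hence
\begin{equation*}
\|M^j_0 f\|_{L^1(\R^d)}\lesssim 2^{j}\|f\|_{L^1(\R^d)},
\end{equation*}
so $\varepsilon(1)=-1$. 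One can also get this in dual form directly from \eqref{eq: duality} with $p=1$, $p'=\infty$: by Lemma~\ref{lem:essentialsupp}, at most $O(\delta^{-1})$ of the $\delta$-separated annuli essentially pass through any given point, up to rapidly decaying tails.

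\emph{Step 3 (interpolation and summation).} Proposition~\ref{prop:steinlpdualp2}, combined with Lemma~\ref{lem: duality} and Lemma~\ref{lem: loc op red}, gives
\begin{equation*}
\|M^j_0f\|_{L^2}\lesssim 2^{-j(d-2)/2}\|f\|_{L^2}.
\end{equation*}
Since $M^j_0$ is a supremum of linear operators, we linearise the supremum via a measurable choice $r(x)$. We then apply Riesz--Thorin to the linear operators $f\mapsto A^jf(x,r(x))$, whose bounds are uniform in the choice of $r(\cdot)$.
\begin{itemize}
\item For $1<p\le 2$: interpolating the $L^1$ bound $2^{j}$ with the $L^2$ bound $2^{-j(d-2)/2}$, with $1/p=(1-\theta)+\theta/2$, gives decay $2^{-j\varepsilon(p)}$ where $\varepsilon(p)=\theta d/2-1$ and $\theta=2-2/p$. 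Thus $\varepsilon(p)=d(1-1/p)-1$, which is positive exactly when $p>\frac{d}{d-1}$.
\item For $p\ge2$: interpolating the $L^2$ bound with the $L^\infty$ bound gives $\varepsilon(p)=(d-2)/p>0$ since $d\ge3$.
\end{itemize}

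\emph{Conclusion.} Write $M_0f\le\sum_{j\ge0}M^j_0f$. The $j=0$ term is controlled by $M_{\mathrm{HL}}$, and the remaining terms sum geometrically, giving \eqref{eq: loc Stein thm}. Lemma~\ref{lem: loc to glob} is not needed here, which is why this argument also covers $p<2$.

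\emph{Main obstacle.} The only delicate points are the $L^1$ endpoint and the interpolation of a maximal (sublinear) operator. I would handle both by linearisation, after confirming that the Schwartz tails from Lemma~\ref{lem:essentialsupp} cost only $O(1)$ factors.
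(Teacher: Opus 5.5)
Your argument is correct and is essentially the paper's proof. It uses the same two endpoints: the trivial $L^1$ bound with loss $2^{j}$ (so $\varepsilon_d(1)=-1$) and Proposition~\ref{prop:steinlpdualp2} at $p=2$. These are interpolated to get $\varepsilon_d(p)=d-1-d/p$, summed in $j$, and $p>2$ is handled by interpolation with $L^\infty$. The differences are cosmetic: you interpolate the linearised maximal operators rather than the linear map $(a_C)_{C\in\cC}\mapsto\sum_{C}a_C\Chi_{C^\delta}$, and you treat $p>2$ for each $M^j_0$ rather than for $M_0$.

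Two of your side remarks are false, although nothing in your argument depends on them. First, $M^j_0f\lesssim M_{\mathrm{HL}}f$ does not hold uniformly in $j$; it loses a factor $2^j$, as a bump on a ball of radius $\sim\delta$ at distance $\sim 1$ shows. Second, a single point can lie in all $\sim\delta^{-d}$ annuli of a $\delta$-separated family, not just $O(\delta^{-1})$ of them. That count $\delta^{-d}$ is exactly what gives the paper's dual bound $\delta^{-d}\sup_C|a_C|$, whereas a count of $O(\delta^{-1})$ would give the false exponent $\varepsilon(1)=d-2$.
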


\begin{remark} We only consider the local maximal operator here, since Lemma~\ref{lem: loc to glob} cannot be used to obtain global maximal estimates in the full range. This is due to the restriction $p \geq 2$ arising from the use of Littlewood--Paley theory. Nevertheless, a minor modification of the argument used to prove Proposition~\ref{prop: loc Stein thm} can be used to prove the global maximal bound in Theorem~\ref{thm: Stein}. 
\end{remark}

\begin{proof}[Proof (of Proposition~\ref{prop: loc Stein thm})] Define the exponents
\begin{equation*}
    \varepsilon_d(p) \coloneq d - 1 - \frac{d}{p} \qquad \textrm{for $1 \leq p \leq 2$;}
\end{equation*}
for $p = 2$, this agrees with the definition of $\varepsilon_d(2)$ from \eqref{eq: L2 exp}. For $j \in \N$, we claim that
\begin{equation}\label{eq: Stein proof 1}
    \| M_0^j f\|_{L^p(\bbB^d)} \lesssim 2^{-j \varepsilon_d(p)} \|f\|_{L^p(\R^d)} \qquad \textrm{for $1 \leq p \leq 2$.}
\end{equation}
Once we have \eqref{eq: Stein proof 1}, we can immediately apply Lemma~\ref{lem: loc op red} to upgrade to a global estimate, with $\bbB^d$ replaced with $\R^d$. The same global bound holds for $j = 0$ by comparison with the Hardy--Littlewood maximal operator. If $d \geq 3$, then $\varepsilon_d(p) > 0$ for all $\frac{d}{d-1} < p \leq 2$ (note that the restriction $d \geq 3$ ensures that the range of $p$ is nonempty). Thus, in this case the resulting global bounds sum to give \eqref{eq: loc Stein thm} in the restricted range $\frac{d}{d-1} < p \leq 2$. The remaining range $2 < p < \infty$ is treated via interpolation with the trivial $L^{\infty}$ maximal estimate. 

By Lemma~\ref{lem: duality}, matters are further reduced to showing that, for $1 \leq p \leq 2$, the inequality
 \begin{equation}\label{eq: Stein proof 2}
        \NormLpRd{\sum_{C \in \cC} a_C \Chi_{C^\delta}}{p'}{d} \lesssim \delta^{1-d/p + \varepsilon_d(p)} \Brac{ \sum_{C \in \cC} |a_C|^{p'}}^{1/p'}
    \end{equation}
holds for all $0 < \delta \leq 1/2$ dyadic, $\cC$ any $\delta$-separated set of unit scale spheres and $(a_C)_{C \in \cC}$ any complex sequence.

For $p =1$, note that the $\delta$-separation ensures $\#\cC \lesssim \delta^{-d}$. Using Lemma~\ref{lem: Lp Chi}, we therefore obtain
    \begin{equation*}
           \NormLpRd{\sum_{C \in \cC} a_C \Chi_{C^\delta}}{\infty}{d} \lesssim \#\cC \cdot \sup_{C\in\cC} \normLpRd{\Chi_{C^\delta}}{\infty}{d}  \sup_{C \in \cC} |a_C| \lesssim \delta^{-d} \sup_{C \in \cC} |a_C|, 
    \end{equation*}
as required. The case $p = 2$ is exactly Proposition~\ref{prop:steinlpdualp2} and so \eqref{eq: Stein proof 2} follows for all $1 \leq p \leq 2$ by interpolation. 
\end{proof}




\subsection{Comparison with geometric arguments} As in \S\ref{subsec: geom comparison}, it is interesting to compare the above Fourier analytic arguments with their purely geometric counterparts. The analogue of Proposition~\ref{prop:steinlpdualp2} for the purely geometric approach to the problem is the following $L^2$ bound. 

\begin{proposition} \label{prop: geometric L2}
    For all $d \geq 3$, the inequality
\begin{equation}\label{eq: geometric L2}
    \NormLpRd{\sum_{C \in \cC} a_C \chi_{C^\delta}}{2}{d} \lesssim \delta^{1-d/2} \Brac{ \sum_{C \in \cC} |a_C|^2 }^{1/2}
\end{equation}
 holds for all $0 < \delta \leq 1/2$, any $\delta$-separated set $\cC$ of unit scale spheres and $(a_C)_{C \in \cC}$ any complex sequence. 
\end{proposition}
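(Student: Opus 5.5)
The plan is to expand the square and reduce \eqref{eq: geometric L2} to a Schur-type bound on the pairwise intersections $|C_1^\delta \cap C_2^\delta|$. We have
\begin{equation*}
    \NormLpRd{\sum_{C \in \cC} a_C \chi_{C^\delta}}{2}{d}^2 \leq \sum_{C_1, C_2 \in \cC} |a_{C_1}||a_{C_2}| \, |C_1^\delta \cap C_2^\delta|.
\end{equation*}
By Schur's test (or Cauchy--Schwarz via $|a_{C_1}||a_{C_2}| \leq \tfrac12(|a_{C_1}|^2 + |a_{C_2}|^2)$ and symmetry), it suffices to prove
\begin{equation*}
    \sup_{C_1 \in \cC} \sum_{C_2 \in \cC} |C_1^\delta \cap C_2^\delta| \lesssim \delta^{2-d}.
\end{equation*}
Taking square roots then gives the claimed factor $\delta^{1-d/2}$.

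The key geometric input is the intersection estimate
\begin{equation*}
    |C_1^\delta \cap C_2^\delta| \lesssim \delta \cdot \frac{\delta}{\delta + \dist(C_1, C_2)}
\end{equation*}
for unit scale spheres in dimension $d \geq 3$. To prove it, I would write $C_i = C(x_i, r_i)$ and $D \coloneq |x_1 - x_2|$, and use polar coordinates about $x_1$. This shows that $|C_1^\delta \cap C_2^\delta|$ is at most $O(\delta)$ times the $\sigma$-measure of
\begin{equation*}
    \big\{\omega \in S^{d-1} : \big||x_1 - x_2 + s\omega| - r_2\big| < \delta \text{ for some } |s - r_1| < \delta\big\}.
\end{equation*}
The function $|x_1 - x_2 + s\omega|^2 = |x_1 - x_2|^2 + s^2 + 2sD\cos\theta$ depends on $\omega$ only through $t = \cos\theta$, where $\theta$ is the angle between $\omega$ and $x_1 - x_2$. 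The condition therefore confines $|x_1 - x_2 + s\omega|^2$ to an interval of length $O(\delta)$; here one uses $|x_1 - x_2 + s\omega| \lesssim 1$ on unit scale spheres, together with the uniform lower bound $\gtrsim 1$ near the relevant region, which again comes from unit scale. This in turn confines $t$ to an interval of length $O(\delta/D)$.

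The pushforward of $\sigma$ under $\omega \mapsto \cos\theta$ has density $c_d(1-t^2)^{(d-3)/2}$. This density is bounded precisely when $d \geq 3$, and this is the step where the dimensional restriction enters. It gives $\sigma$-measure $O(\min(1, \delta/D))$, hence the claimed bound. This step is the main obstacle, mainly in handling the thickening in $s$ and the region where $|x_1 - x_2 + s\omega|$ is small cleanly. I would try to sidestep both by working with squared distances, since $y \mapsto |y - x_2|^2$ has gradient of size $\sim r_2 \sim 1$ on $C_2^\delta$. In $d = 2$ the density $(1-t^2)^{-1/2}$ blows up at $t = \pm 1$, which correspond exactly to the tangency configurations. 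This is consistent with the failure of the statement in $d = 2$.

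With the intersection estimate in hand, the summation is routine. Fix $C_1$ and decompose $\cC$ dyadically according to $D \sim 2^\ell \delta$ with $0 \leq \ell \lesssim \log \delta^{-1}$; the case $D < \delta$ consists only of $C_1$ itself, with $|C_1^\delta| \lesssim \delta$. By $\delta$-separation of centres, the $\ell$-th shell contains $\lesssim 2^{\ell d}$ spheres. Each of these contributes $\lesssim \delta \cdot 2^{-\ell}$, so the $\ell$-th shell contributes $\lesssim \delta\, 2^{\ell(d-1)}$. Summing the geometric series up to $2^\ell \sim \delta^{-1}$ (since $D \leq 1$) yields $\lesssim \delta \cdot \delta^{1-d} = \delta^{2-d}$. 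This is the required Schur bound, and it completes the proof.
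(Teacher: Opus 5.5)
Your proposal is correct and is essentially the argument the paper indicates but omits. You expand the square, use the spherical intersection bound of Lemma~\ref{lem: sphere intersection} in place of weak orthogonality, and sum dyadically in $\dist(C_1,C_2)$ using $\delta$-separation, as in the off-diagonal part of the proof of Proposition~\ref{prop:steinlpdualp2}. Beyond the paper, you also give a valid proof of the intersection bound via the bounded density $(1-t^2)^{(d-3)/2}$; only the upper bound $|x_1-x_2+s\omega|\lesssim 1$ is actually needed there. You also rightly drop the radius pigeonholing from Proposition~\ref{prop:steinlpdualp2}: it is unnecessary here and would cost a factor $\delta^{-1/2}$, since the intersection bound does not improve for nearly equal radii.
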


Proposition~\ref{prop: geometric L2} follows from a very similar argument to that used for Proposition~\ref{prop:steinlpdualp2} and the details of the proof are therefore omitted. We remark that the following bound plays the role of Lemma~\ref{lem:weakorthogonality} in the proof of Proposition~\ref{prop: geometric L2}.  

 \begin{lemma}[Spherical intersection bound]\label{lem: sphere intersection}
        Let $d \geq 3$ and $C_1$, $C_2 \subset \R^d$ be unit scale spheres. For $0 < \delta \leq 1/2$, we have
        \begin{equation}
         \inn{\chi_{C_1^{\delta}}}{\chi_{C_2^{\delta}}} =   |C_1^\delta \cap C_2^\delta| \lesssim \delta \cdot \big(1+\delta^{-1} \dist (C_1,C_2)\big)^{-1}.
            \label{eq:sphericalintersection}
        \end{equation}
    \end{lemma}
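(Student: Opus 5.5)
We need to bound $|C_1^\delta\cap C_2^\delta|$ for unit scale spheres in $\R^d$, $d\ge 3$. The plan is to slice the intersection along the axis through the two centres and reduce to a one-dimensional measure estimate. Write $C_i = C(x_i,r_i)$ and $t \coloneq |x_1-x_2| = \dist(C_1,C_2)$. If $t \le \delta$, the trivial bound $|C_1^\delta \cap C_2^\delta| \le |C_1^\delta| \lesssim \delta$ already suffices, so assume $t > \delta$.

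First, use polar coordinates centred at $x_1$: write $y = x_1 + s\omega$ with $s \in (r_1-\delta, r_1+\delta)$ and $\omega \in S^{d-1}$. Let $e \coloneq (x_2-x_1)/t$ and $\theta$ be the angle between $\omega$ and $e$. By the law of cosines,
\begin{equation*}
|y-x_2|^2 = s^2 + t^2 - 2st\cos\theta .
\end{equation*}
The condition $y \in C_2^\delta$ becomes $\big||y-x_2| - r_2\big| < \delta$. Since $|y - x_2| \sim 1$ for unit scale spheres, this forces $|y-x_2|^2$ into an interval of length $O(\delta)$. For fixed $s \sim 1$, this confines $\cos\theta$ to an interval $J_s$ of length $O(\delta/(st)) = O(\delta/t)$.

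Next, compute the measure. The measure of $\{\omega \in S^{d-1} : \cos\theta \in J\}$ equals
\begin{equation*}
c_d\int_J (1-u^2)^{(d-3)/2}\,\ud u .
\end{equation*}
For $d\ge 3$ the weight $(1-u^2)^{(d-3)/2}$ is bounded by $1$, so this measure is $\lesssim |J| \lesssim \delta/t$. Integrating over the $s$-interval of length $2\delta$, with $s^{d-1}\sim 1$, gives $|C_1^\delta\cap C_2^\delta| \lesssim \delta\cdot \delta/t$. Combining with the trivial bound yields $\lesssim \delta\min(1,\delta/t) \sim \delta(1+\delta^{-1}t)^{-1}$, as claimed.

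The main point to check is the step where $d\ge3$ enters. It is exactly the boundedness of $(1-u^2)^{(d-3)/2}$ near the poles $u=\pm1$, which correspond to tangency. For $d=2$ this weight blows up like $(1-u^2)^{-1/2}$, and one only gets $(\delta/t)^{1/2}$, matching Lemma~\ref{lem:weakorthogonality}. The other detail is the Jacobian conversion from $|y-x_2|$ to $\cos\theta$: we need $\partial_{\cos\theta}|y-x_2|^2 = -2st$ with $s\sim 1$. This uses that $s\ge r_1-\delta \ge 1/2$, so no degeneracy arises.
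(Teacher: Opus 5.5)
Your argument is correct and complete. Slicing in polar coordinates about $x_1$, the constraint $y\in C_2^\delta$ pins $\omega\cdot e$ to an interval $J_s$ of length exactly $2r_2\delta/(st)\le 8\delta/t$. For $d\ge 3$ the push-forward of surface measure under $\omega\mapsto\omega\cdot e$ has bounded density $c_d(1-u^2)^{(d-3)/2}$ (constant when $d=3$, by Archimedes' hat-box theorem). Integrating in $s$ therefore gives $|C_1^\delta\cap C_2^\delta|\lesssim \delta^2/\dist(C_1,C_2)$, and together with the trivial bound this is \eqref{eq:sphericalintersection}.

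The paper gives no proof of this lemma. It only says the lemma follows from elementary geometry and calculus and refers to \cite[Lemma B.1]{CDK2025}, so your computation supplies the omitted details rather than competing with a written argument. The geometric route suggested by the paper's surrounding discussion (the Remark after Lemma~\ref{lem:weakorthogonality}, and the planar Lemma~\ref{lem:intersectingannuli}) would first locate $C_1^\delta\cap C_2^\delta$, and then estimate volumes case by case. The intersection lies near a small cap when the spheres are nearly tangent, and near a thin band around $C_1\cap C_2$ when they are transverse. Your slicing avoids that case split entirely, because every tangency, internal or external, sits at a pole $u=\pm1$, where the density is harmless once $d\ge 3$. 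A by-product is that you only use $r_1,r_2\sim 1$. So, unlike in Lemma~\ref{lem:weakorthogonality}, there is no need to exclude external tangency.

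Your diagnosis of $d=2$ is also right and can be pushed further. Additionally record that the distance from $J_s$ to $\{\pm 1\}$ is $\gtrsim (\tang{C_1}{C_2}-O(\delta))/\dist(C_1,C_2)$, and use the estimate $\int_J(1-u^2)^{-1/2}\,\ud u\lesssim |J|\,(|J|+\dist(J,\{\pm1\}))^{-1/2}$. With these, the same slicing in the plane recovers the area bound \eqref{eq: int annuli area}.
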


Lemma~\ref{lem: sphere intersection} is a consequence of elementary geometric considerations and calculus and we again omit the details (see, for instance, \cite[Lemma B.1]{CDK2025}). 

A significant limitation of Proposition~\ref{prop: geometric L2}, when compared with its Fourier analytic counterpart in Proposition~\ref{prop:steinlpdualp2}, is that it is no longer possible to directly interpolate the $L^2$ estimate \eqref{eq: geometric L2} to prove maximal estimates for other values of $p$. To better understand this, observe:
\begin{itemize}
    \item For the geometric approach, the goal is to prove maximal estimates which are uniform in $\delta$, as in \eqref{eq: geom loc}. This requirement is binary and unquantifiable: either the estimate is uniform or it is not.
    \item For the Fourier analytic approach, the goal is to prove maximal estimates with an additional $2^{-j\varepsilon(p)} = \delta^{\varepsilon(p)}$ decay factor, as in \eqref{eq: loc to glob a}. This requirement is inherently quantifiable in terms of the parameter $\varepsilon(p)$. For a given $p$, any $\varepsilon(p) > 0$ suffices. However, as we saw in the previous subsection, we can trade an estimate at $L^2$ with $\varepsilon(2)$ large for, say, an estimate at some lower $L^p$ with a correspondingly  smaller (but still positive!) value of $\varepsilon(p)$.
\end{itemize}
 This is one of the key advantages of the Fourier approach: it is far more amenable to interpolation. This provides flexibility to fully exploit theory in the $L^2$, $L^1$ and $L^{\infty}$ spaces, where specific tools become available. We have already observed the power of this method in the proof of the spherical maximal bounds in \S\ref{subsec: loc spherical}. In \S\ref{sec: main argument} below, we shall see that similar, but subtler, interpolation techniques (but now in the complementary $p > 2$ range) form the bedrock of Bourgain's proof of the circular maximal function. 

Finally, it is important to note that both Proposition~\ref{prop: geometric L2} and Lemma~\ref{lem: sphere intersection} \textbf{fail} for $d = 2$. Indeed, if Proposition~\ref{prop: geometric L2} were true in the $d = 2$ case, then, using Lemma~\ref{lem: geom duality}, it would imply the (fallacious) $L^2$ boundedness of the local circular maximal operator $M_0$ on $\R^2$. On the other hand, we can see the failure of the planar case of Lemma~\ref{lem: sphere intersection} directly via the following example, which is closely related to Example~\ref{ex: curv}.

\begin{example}\label{ex: tangent pair} Consider two tangent circles $C_1$, $C_2$ satisfying $\dist(C_1, C_2) \sim 1$. In this case, it is not difficult to check that $|C_1^{\delta} \cap C_2^{\delta}| \sim \delta^{3/2}$. For $\delta > 0$ small, this is much larger than $\delta^2$, which corresponds to the left-hand side of \eqref{eq:sphericalintersection}.
\end{example}

It follows that the planar version of Lemma~\ref{lem: sphere intersection} must account for tangencies between the two circles: see Lemma~\ref{lem:intersectingannuli} below. 




\section{Circle tangencies}
\label{sec:geometriclemmas}




\subsection{$\delta$-tangency} Our earlier observations suggest (interior) circle tangencies play an important role in the behaviour of the circular maximal function. Indeed, tangencies feature in both Example~\ref{ex: curv} and Example~\ref{ex: tangent pair}, suggesting that sets of circles $\cC$ with many tangent pairs constitute a key enemy. This is consistent with the weak orthogonality inequality \eqref{eq: w orthog}, which dramatically improves for transversal $C_1$, $C_2$. 

Owing to the analytic nature of the maximal problem, we are, in fact, not interested in tangencies \textit{per se}: rather, we work under a quantified tangency hypothesis.

\begin{definition} Let $C_1$, $C_2$ be circles and $0 < \delta \leq 1/2$. We say the pair $(C_1, C_2)$ is \textit{$\delta$-tangent} if $\Delta(C_1, C_2) < \delta$. 
\end{definition}

In this section, we observe a number of basic results concerning perfectly tangent circles (that is, $C_1$, $C_2$ satisfying $\Delta(C_1, C_2) = 0$). Our task is then to find \textit{continuum} analogues of these results: that is, we formulate and prove statements concerning $\delta$-tangent circles, which are quantified in terms of the $\delta$ parameter. 




\subsection{Intersecting annuli}

We begin with a quantitative description of the intersection $C_1^{\delta} \cap C_2^{\delta}$ of a pair of $\delta$-annuli in $\R^2$. For this, it is convenient to introduce the following definition. 

\begin{definition} Let $C_i = C(x_i, r_i)$ where $(x_i, r_i) \in \R^2 \times (0, \infty)$ for $i = 1$, $2$ satisfy $x_1 \neq x_2$. Then we define
\begin{equation*}
   \omega(C_1;C_2) \coloneq x_1 - r_1 \sgn(r_1-r_2) \frac{x_1-x_2}{|x_1-x_2|}. 
\end{equation*} 
\end{definition}

If $\tang{C_1}{C_2} = 0$, then $\omega(C_1;C_2)$ is the point of tangency between $C_1$ and $C_2$. If $C_1 \cap C_2 \neq \varnothing$ and $\tang{C_1}{C_2} > 0$, then $\omega(C_1;C_2)$ is the midpoint of the arc of $C_1$ between the two points of intersection between $C_1$ and $C_2$: see Figure~\ref{fig: omega} below.

\begin{figure}
    \centering
    \begin{subfigure}[t]{0.32\textwidth}
        \centering
        \includegraphics[height=4cm]{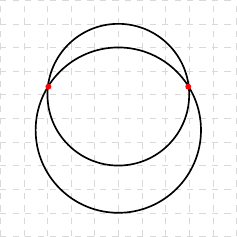}
        \captionsetup{width=0.95\textwidth, singlelinecheck=off}
        \caption{\footnotesize{Transversal pair: 
    \begin{equation*}
        \tang{C_1}{C_2} \sim \dist(C_1,C_2) \sim 1.
    \end{equation*}
        Then $C_1^{\delta} \cap C_2^{\delta}$ has diameter $O(1)$ and area $O(\delta^2)$.}}
    \end{subfigure}%
    ~ \quad
    \begin{subfigure}[t]{0.32\textwidth}
        \centering
        \includegraphics[height=4cm]{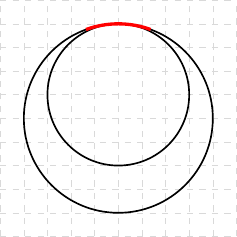}
        \captionsetup{width=0.95\textwidth, singlelinecheck=off}
        \caption{\footnotesize{Tangential pair:
        \begin{equation*}
            \tang{C_1}{C_2} \lesssim \delta;\, \dist(C_1,C_2) \sim 1. 
        \end{equation*}
        Then $C_1^{\delta} \cap C_2^{\delta}$ has diameter $O(\delta^{1/2})$ and area $O(\delta^{3/2})$.}}
    \end{subfigure}%
    ~ \quad
\begin{subfigure}[t]{0.32\textwidth}
        \centering
        \includegraphics[height=4cm]{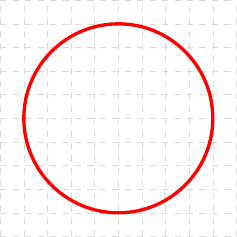}
       \captionsetup{width=0.95\textwidth, singlelinecheck=off}
        \caption{\footnotesize{Coincidence:
        \begin{equation*}
            \tang{C_1}{C_2},\, \dist(C_1,C_2) \lesssim \delta.
        \end{equation*}
        Then $C_1^{\delta} \cap C_2^{\delta}$ has diameter $O(1)$ and area $O(\delta)$.}}
    \end{subfigure}%
    \caption{Extreme cases of intersecting annuli. Note that the upper bounds for the diameter (corresponding to $\operatorname{length}(\Gamma)$) and area of $C_1^{\delta} \cap C_2^{\delta}$ are consistent with Lemma~\ref{lem:intersectingannuli} a) and \eqref{eq: int annuli area}. For (B) and (C), the lower bound for the diameter from  Lemma~\ref{lem:intersectingannuli} b) is also valid. The general statement of Lemma~\ref{lem:intersectingannuli} can be thought of as interpolating between these extremes.}
    \label{fig: intersecting annuli}
\end{figure}

\begin{lemma}[Intersecting annuli] \label{lem:intersectingannuli}
    Let $C_1$, $C_2$ be unit scale circles and $0 < \delta \leq 1/2$. 
    \begin{enumerate}[a)]
        \item The intersection
    $C_1^{\delta} \cap C_2^{\delta}$ is contained in the $\delta$-neighbourhood of an arc $\Gamma$ of $C_1$, centred at $\omega(C_1;C_2)$, with
    \begin{equation*}
        \operatorname{length}(\Gamma) \lesssim \Brac{\frac{\tang{C_1}{C_2} + \delta}{\dist (C_1,C_2) + \delta}}^{1/2}. 
    \end{equation*}
     \item If $\Delta(C_1,C_2) \leq \delta$ and $\lambda \geq 4 \delta$, then $C_1^{\delta} \cap C_2^{\lambda}$ contains the $\delta$-neighbourhood of an arc $\Gamma$ of $C_1$, centred at $\omega(C_1;C_2)$, with
\begin{equation}\label{eq: length}
    \operatorname{length}(\Gamma) \gtrsim \min\bigg\{\Brac{\frac{\lambda}{\dist(C_1,C_2) + \delta}}^{1/2}, 1 \bigg\}.
    \end{equation}

    \item There exist two (not necessarily distinct) arcs $\gamma_1$, $\gamma_2$ of $C_1$ such that $C_1^{\delta} \cap C_2^{\delta}$ is contained in the $\delta$-neighbourhood of $\gamma_1 \cup \gamma_2$ and
    \begin{equation*}
          \max_{i = 1, 2} \operatorname{length}(\gamma_i) \lesssim \frac{\delta}{(\tang{C_1}{C_2} + \delta)^{1/2}(\dist(C_1,C_2) + \delta)^{1/2}}.
        \end{equation*}
    \end{enumerate}
    
\end{lemma}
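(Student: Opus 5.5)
The plan is to reduce everything to a one-variable computation by parametrising $C_1$ by angle about its centre. Write $C_i = C(x_i, r_i)$ and set $t \coloneq \dist(C_1,C_2) = |x_1 - x_2|$. If $t \lesssim \delta$, then the right-hand sides in a) and b) are $\gtrsim 1$ (because $\tang{C_1}{C_2} \leq |r_1 - r_2| + t$ and, in a), we may take $\Gamma = C_1$), and c) is trivial for the same reason. So assume $t \geq c\delta$.

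Parametrise $y = x_1 + \rho\, e(\theta)$ with $|\rho - r_1| < \delta$, where $\theta$ is measured from the direction $-\sgn(r_1 - r_2)(x_1 - x_2)/t$, so that $\theta = 0$ corresponds to $\omega(C_1;C_2)$. The law of cosines gives
\begin{equation*}
    |y - x_2|^2 = \rho^2 + t^2 + 2\sigma\rho t\cos\theta, \qquad \sigma \coloneq \sgn(r_1 - r_2).
\end{equation*}
The key auxiliary function is $F(\theta) \coloneq |x_1 + r_1 e(\theta) - x_2| - r_2$. Using $1 - \cos\theta = 2\sin^2(\theta/2)$ and the fact that all quantities have unit scale, with $|y - x_2| \sim 1$ in the relevant region by Remark~\ref{rmk: no ext tang}, I expect the identity
\begin{equation*}
    F(\theta) = F(0) + c(\theta)\, r_1 t \sin^2(\theta/2), \qquad c(\theta) \sim 1 \text{ with a definite sign},
\end{equation*}
and moreover $|F(0)| = \tang{C_1}{C_2}$, with $F(0)$ having the sign that makes $|F|$ grow away from $\theta = 0$ in the internally tangent configuration. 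Each $y \in C_1^\delta$ lies within $\delta$ of $x_1 + r_1 e(\theta)$, so $y \in C_2^\delta$ forces $|F(\theta)| < 2\delta$. I will treat the sign issue carefully: if $C_1$ and $C_2$ intersect, then $F(0)$ and the quadratic term have opposite signs, and $F$ vanishes at $\pm\theta_0$ with $t\theta_0^2 \sim \tang{C_1}{C_2}$.

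For a), the condition $|F(\theta)| < 2\delta$ forces $t\sin^2(\theta/2) \lesssim \tang{C_1}{C_2} + \delta$, which gives $|\theta| \lesssim ((\tang{C_1}{C_2} + \delta)/t)^{1/2}$. Since $t \gtrsim \delta$, this is the claimed bound.

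For b), I reverse the argument. If $|\theta| \leq c(\lambda/(t+\delta))^{1/2}$ with $c$ small, then $|F(\theta)| \leq \delta + Ct\theta^2 \leq \lambda/2$, using $\lambda \geq 4\delta$. Any point within $\delta$ of such a point of $C_1$ then lies in $C_2^{\lambda}$, because $\delta + \lambda/2 < \lambda$. The minimum with $1$ in \eqref{eq: length} accounts for the angle being at most $\pi$.

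Part c) is the main obstacle. The relevant set is $\{\theta : |F(\theta)| < 2\delta\}$, which is the preimage of a short interval under a function of the form $G(s) = a + bs$ with $s = \sin^2(\theta/2)$ and $b \sim t$. In the variable $s$ this preimage is an interval of length $\lesssim \delta/t$. Pulling it back to $\theta$ produces at most two intervals, one on each side of $0$, which give $\gamma_1$ and $\gamma_2$. On each piece $|ds/d\theta| \sim |\theta|$ near $0$ (and I must also handle $\theta$ near $\pi$, where the symmetric estimate $|ds/d\theta| \sim |\pi - \theta|$ holds). Where the pieces stay at $|\theta| \gtrsim ((\tang{C_1}{C_2}+\delta)/t)^{1/2}$, each has length $\lesssim (\delta/t)/|\theta| \lesssim \delta/((\tang{C_1}{C_2}+\delta)t)^{1/2}$. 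In the remaining case, the pieces come within that scale of $0$. This can only happen when $\tang{C_1}{C_2} \lesssim \delta$, since $|F(0)| = \tang{C_1}{C_2}$ while the set requires $|F| < 2\delta$. Then the whole set has $|\theta| \lesssim (\delta/t)^{1/2}$ by a), which matches the bound. The work lies in verifying the exact form of $F$, in particular the uniform size of $c(\theta)$ (from the expansion of the square root) and the handling of $\theta$ near $\pi$.
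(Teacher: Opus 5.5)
Your approach is correct in substance, and it takes a genuinely different route from the paper's. The paper proves only b), and only for $r_1 \geq r_2$; it defers a) and c) to Bourgain, Schlag and Wolff. Its proof of b) runs in three steps. It shows $|y-x_2| > r_2-\lambda/2$ on $C_1^{\delta}$. It then uses a continuity argument with the auxiliary circle $C(x_2, r_2+\lambda/2)$ to produce an arc $\Gamma\subset C_2^{\lambda/2}$ of $C_1$ whose endpoints lie on that auxiliary circle (or takes $\Gamma = C_1$). Finally, it bounds the half-angle of $\Gamma$ from below by applying the quantitative triangle inequality, Lemma~\ref{lem:quantitativetriineq}, at an endpoint.

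Your key observation is that $G(s) = |x_1 + r_1 e(\theta) - x_2|^2$ is affine in $s = \sin^2(\theta/2)$ with slope $4\sigma r_1 t$. This handles all three parts and both signs of $r_1-r_2$ at once. It also explains the two arcs in c): they are the preimage, under the even map $\theta\mapsto s$, of an $s$-interval of length $\lesssim \delta/t$. The paper's route keeps the geometric section uniform, since the same triangle inequality drives the clamshell lemma. Yours is self-contained and also covers the parts the paper only cites.

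Some details need fixing when you write it out. Throughout, write $D(\theta) = |x_1+r_1e(\theta)-x_2|$.

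\begin{enumerate}[1)]
\item \textbf{Sign.} With $\theta=0$ at $\omega(C_1;C_2)$, the law of cosines reads $|y-x_2|^2=\rho^2+t^2-2\sigma\rho t\cos\theta$. With your $+$ sign, $\theta=0$ is the antipodal point and $|F(0)|=t+|r_1-r_2|$, not $\tang{C_1}{C_2}$.

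\item \textbf{The claim $c(\theta)\sim 1$.} This two-sided claim fails when $r_1>r_2$ and $r_1-t$ is small. Indeed $c(\theta)=4\sigma/(D(\theta)+D(0))$, and $D(0)=r_1-t$ can be tiny. However, only $|c|\gtrsim 1$ is needed globally, and the upper bound holds wherever you use it. In b), $\tang{C_1}{C_2}\le\delta$ forces $D(0)\ge 1/2$. In c), you only need it at points with $|F|<2\delta$, where $D\approx r_2$. Working with $G$ throughout avoids the issue entirely.

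\item \textbf{The reduction to $t\ge c\delta$.} This does not make b) trivial, since b) is a lower bound; but your b) argument never uses it, so simply drop it there. For c), the right justification is that $C_1^\delta\cap C_2^\delta\neq\varnothing$ forces $|r_1-r_2|<t+2\delta$, hence $\tang{C_1}{C_2}\lesssim\delta$, so the right-hand side is $\gtrsim 1$.

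\item \textbf{Near $\theta=\pi$.} This region is excluded outright once $t\ge 4\delta$. Since $|F(\pi)|=t+|r_1-r_2|$, on the set you get $4r_1t\cos^2(\theta/2)=|G(1)-G(s)|\gtrsim t$, so $\pi-|\theta|\gtrsim 1$.

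\item \textbf{The case $r_1=r_2$.} Here $\omega(C_1;C_2)$ is not on $C_1$; just fix $\sigma=\pm1$ arbitrarily. For b) in this case, $C_1^{\delta}\subseteq C_2^{2\delta}$ directly.
\end{enumerate}
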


Note, in particular, that part c) implies
\begin{equation}\label{eq: int annuli area}
    |C_1^{\delta} \cap C_2^{\delta}| \lesssim \frac{\delta^2}{(\tang{C_1}{C_2} + \delta)^{1/2}(\dist(C_1,C_2) + \delta)^{1/2}},
\end{equation}
\begin{wrapfigure}{r}{0.4\textwidth}
    \centering
    \includegraphics[width=0.35\textwidth]{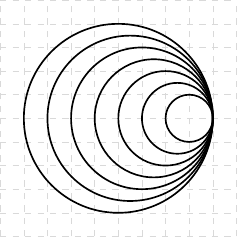}
\caption{\footnotesize{Circles arranged in a clamshell configuration. This is the unique arrangement in which all pairs $(C_1, C_2)$ are tangent.}}
    \label{fig: clam}    
\end{wrapfigure}
which is the correct formulation of Lemma~\ref{lem: sphere intersection} in the plane. Various forms of the upper bounds in Lemma~\ref{lem:intersectingannuli} a) and c) can be found, for instance, in \cite[Lemma 3]{Bourgain1986}, \cite[Lemma 4.2]{Schlag1997} and the expository article \cite[Lemma 3.1]{Wolff1999}.  We avoid reproducing the proofs here, which are moderately involved exercises in calculus and trigonometry. The lower bound in Lemma~\ref{lem:intersectingannuli} b) follows by a minor modification of the argument to prove Lemma~\ref{lem:intersectingannuli} a); see also \cite[Lemma 3.8]{PYZ}. Since it has appeared less frequently in the literature, we discuss the proof of Lemma~\ref{lem:intersectingannuli} b) in \S\ref{sec: length proof} below. In Figure~\ref{fig: intersecting annuli}, we motivate the form of the estimates by considering extremal cases.

We note the following simple consequence of Lemma~\ref{lem:intersectingannuli} c), which applies to pairs of annuli of different widths. This will be useful later in \S\ref{sec: main argument}. 

\begin{corollary}\label{cor:intersectingannuli} Let $C_1$, $C_2$ be unit scale circles and $0 < \delta \leq \lambda \leq 1/2$. Then 
\begin{equation*}
    |C_1^{\delta} \cap C_2^{\lambda}| \lesssim \delta \lambda^{1/2} (\dist(C_1, C_2) + \delta)^{-1/2}.
\end{equation*}
\end{corollary}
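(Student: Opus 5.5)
The plan is to reduce the corollary to Lemma~\ref{lem:intersectingannuli} c) applied at the single scale $\lambda$, and then to use the thinner annulus $C_1^\delta$ to gain back one factor. Write $C_i = C(x_i,r_i)$.

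The key observation is that $C_1^{\delta} \cap C_2^{\lambda} \subseteq C_1^{\lambda} \cap C_2^{\lambda}$. Applying Lemma~\ref{lem:intersectingannuli} c) with $\lambda$ in place of $\delta$ (this is permitted since $\lambda \leq 1/2$), we find arcs $\gamma_1$, $\gamma_2$ of $C_1$ such that $C_1^\lambda\cap C_2^\lambda$ lies in the $\lambda$-neighbourhood of $\gamma_1\cup\gamma_2$. Dropping the favourable factor $(\tang{C_1}{C_2}+\lambda)^{-1/2} \leq \lambda^{-1/2}$, the arcs satisfy
\begin{equation*}
\operatorname{length}(\gamma_i) \lesssim \frac{\lambda}{\lambda^{1/2}(\dist(C_1,C_2)+\lambda)^{1/2}} \leq \lambda^{1/2}(\dist(C_1,C_2)+\delta)^{-1/2},
\end{equation*}
where the last step uses $\lambda \geq \delta$.

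Next, we intersect with $C_1^\delta$. The set $C_1^\delta \cap N_\lambda(\gamma_i)$ is contained in the set of points $y$ with $||y-x_1|-r_1|<\delta$ whose angular coordinate about $x_1$ lies in an angular interval of length $O(\operatorname{length}(\gamma_i)+\lambda)$, using $r_1 \sim 1$. In polar coordinates about $x_1$, its area is therefore $\lesssim \delta\,(\operatorname{length}(\gamma_i) + \lambda)$. It remains to absorb the additive $\lambda$. Since $\dist(C_1,C_2) \lesssim 1$ for unit scale circles, we have $\lambda^{1/2}(\dist(C_1,C_2)+\delta)^{-1/2} \gtrsim \lambda^{1/2} \geq \lambda$, so the $\lambda$ term is dominated by the main term. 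Summing over $i=1,2$ then gives $|C_1^\delta\cap C_2^\lambda| \lesssim \delta\lambda^{1/2}(\dist(C_1,C_2)+\delta)^{-1/2}$, as claimed.

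The only delicate point is the angular-sector estimate: the $\lambda$-neighbourhood of an arc, intersected with the $\delta$-annulus about the same circle, must have area controlled by $\delta$ times the arc length plus $\lambda$. This holds because both sets are naturally described in polar coordinates centred at $x_1$, with $r_1 \in [1,2]$ bounded away from zero. No further geometric input beyond Lemma~\ref{lem:intersectingannuli} c) is needed.
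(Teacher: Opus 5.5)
Your proposal is correct and follows the paper's proof: both use $C_1^{\delta} \cap C_2^{\lambda} \subseteq C_1^{\lambda} \cap C_2^{\lambda}$, apply Lemma~\ref{lem:intersectingannuli} c) at scale $\lambda$, and bound the arc lengths by $\lambda^{1/2}(\dist(C_1,C_2)+\delta)^{-1/2}$ using $\lambda \geq \delta$. You are slightly more careful than the paper. It asserts directly that $C_1^{\delta} \cap C_2^{\lambda}$ lies in the $\delta$-neighbourhood of $\gamma_1 \cup \gamma_2$, whereas your polar-coordinate step shows the arcs must be enlarged by $O(\lambda)$, and that enlargement is harmless because $\lambda \leq \lambda^{1/2} \lesssim \lambda^{1/2}(\dist(C_1,C_2)+\delta)^{-1/2}$.
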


\begin{proof} Since $C_1^\delta \cap C_2^\lambda \subseteq C_1^\lambda \cap C_2^\lambda$, by Lemma \ref{lem:intersectingannuli} c), the intersection $C_1^\delta \cap C_2^\lambda$ is contained in the $\delta$-neighbourhood of $\gamma_1 \cup \gamma_2$, where $\gamma_1$, $\gamma_2$ are two (not necessarily distinct) arcs of $C_1$ which satisfy
\begin{align*}
    \operatorname{length}(\gamma_i) \lesssim \frac{\lambda}{(\tang{C_1}{C_2} + \lambda)^{1/2}(\dist(C_1,C_2) + \lambda)^{1/2}} \leq \frac{\lambda^{1/2}}{(\dist(C_1,C_2) + \delta)^{1/2}} 
\end{align*}
for $i = 1$, $2$. This directly implies the desired bound. 
\end{proof}




\subsection{Uniqueness of the clam shell configuration}

Given a set of circles $\cC$, we are interested in understanding the (interior) tangent pairs. The number of tangent pairs is trivially bounded by
   \begin{equation}
    \# \{ (C_1,C_2) \in \cC \times \cC : \tang{C_1}{C_2} = 0 \} \leq [\# \cC]^2.
    \label{eq:tangpairsmax}
\end{equation} 
In general, we cannot hope to do better than this upper bound. Indeed, equality holds in \eqref{eq:tangpairsmax} for $\cC$ a \textit{clamshell configuration}, where all the circles are tangent at a common point: see Figure~\ref{fig: clam}. A moment's thought shows that clamshell configuration is also the \textit{unique} arrangement that attains the upper bound in \eqref{eq:tangpairsmax}. Indeed, this is a consequence of the following elementary lemma.

\begin{lemma}[Uniqueness of clamshell configuration: discrete]\label{lem: unique clam disc} If $C_i \subset \R^2$ for $i = 0$, $1$, $2$ are circles with $\Delta(C_i, C_j) = 0$ for $0 \leq i < j \leq 2$, then $C_0  \cap C_1 \cap C_2  \neq \varnothing$.
\end{lemma}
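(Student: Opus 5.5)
The plan is to reduce everything to the equality case of the triangle inequality for the three centres. Write $C_i = C(x_i, r_i)$ and relabel so that $r_0 \geq r_1 \geq r_2$. The hypothesis $\Delta(C_i,C_j) = 0$ says precisely that $|x_i - x_j| = |r_i - r_j| = r_i - r_j$ for $i < j$.

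First I dispose of the degenerate cases. If $r_i = r_j$ for some $i<j$, then $|x_i - x_j| = 0$, so $C_i = C_j$. The triple intersection then reduces to $C_i \cap C_k$ for the remaining index $k$, and it suffices to show that two tangent circles meet. For that, take $r_i > r_k$ (equal radii again means the two circles coincide) and set $u \coloneq (x_k - x_i)/|x_k - x_i|$. The point $p \coloneq x_i + r_i u$ lies on $C_i$. It also lies on $C_k$, because
\begin{equation*}
|p - x_k| = \big| r_i - |x_k - x_i| \big| = \big|r_i - (r_i - r_k)\big| = r_k .
\end{equation*}
This is just the observation that $\omega(C_i;C_k)$ is the tangency point, verified directly.

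In the main case $r_0 > r_1 > r_2$, all centres are distinct, and
\begin{equation*}
|x_0 - x_2| = r_0 - r_2 = (r_0 - r_1) + (r_1 - r_2) = |x_0 - x_1| + |x_1 - x_2| .
\end{equation*}
Equality in the triangle inequality forces $x_1$ to lie on the segment $[x_0, x_2]$. Hence $x_1 - x_0$ and $x_2 - x_0$ point in the same direction, that is, they share a common unit vector $u$. By the computation above, the point $p \coloneq x_0 + r_0 u$ lies on $C_0 \cap C_1$ (using the pair $(0,1)$) and also on $C_0 \cap C_2$ (using the pair $(0,2)$). So $p \in C_0 \cap C_1 \cap C_2$.

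There is no real obstacle here. The only points needing care are the degenerate coincidences of centres or radii, and the observation that the collinearity from the triangle inequality places $x_1$ between $x_0$ and $x_2$. This makes the two tangency points on $C_0$ coincide, not lie antipodally.
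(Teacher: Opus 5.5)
Your proof is correct, including the degenerate cases of coincident circles. The paper gives no proof of this lemma (it calls it obvious), but your argument is the $\delta = 0$ version of its proof of the continuum analogue, Lemma~\ref{lem:clamshell}: there the quantitative triangle inequality (Lemma~\ref{lem:quantitativetriineq}) gives $\dist(C_1,C_2)\,\theta^2 \lesssim \tang{C_1}{C_2} + 2\delta$, and exact tangency forces $\theta = 0$, which is the collinearity you get from the equality case of the triangle inequality.
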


Lemma~\ref{lem: unique clam disc} is the obvious fact that if three circles are pairwise interior tangent, then they must form a clamshell. 

For the maximal problem, rather than exact tangency, we are interested in $\delta$-tangency for $\delta > 0$. We wish to prove a continuum analogue of Lemma~\ref{lem: unique clam disc}, which applies in this quantified setting. For this, we require a preliminary definition.

\begin{definition} Let $C_i = C(x_i,r_i)$ where $(x_i, r_i) \in \R^2 \times (0, \infty)$ for $i = 0$, $1$, $2$ satisfy $x \notin \{x_1,x_2 \}$ and $r \notin \{r_1,r_2 \}$. Then we define
\[
\angle(C_1;C_0,C_2) \coloneq \angle (\sgn (r_1-r_0)(x_1-x_0), \sgn(r_1-r_2)(x_1-x_2)).
\]
\end{definition}

This is precisely the central angle of the arc of $C_1$ between the two points $\omega(C_1;C_0)$ and $\omega(C_1;C_2)$: see Figure~\ref{fig: angle}. Using these definitions, the following result is a slight reworking of \cite[Lemma 4]{Bourgain1986}. 

\begin{lemma}[Uniqueness of clamshell configuration: continuum] \label{lem:clamshell} Let $0 < \delta \leq 1/2$, $0 < \kappa \leq 1$. Suppose $C_i$ for $i = 0$, $1$, $2$ are unit scale circles such that
          \begin{equation}
 \tang{C_0}{C_i} \leq \delta, \quad i = 1,\ 2, \qquad \textrm{and} \qquad \tang{C_1}{C_2} + \delta \leq \kappa \cdot \dist(C_1,C_2). \label{eq:clamshellhyp1}
    \end{equation}
    Letting $C_\mathrm{max}, C_\mathrm{min} \in \{ C_1,C_2 \}$ denote distinct circles of maximal and minimal radius, respectively, the following hold:
    \begin{enumerate}[a)]
        \item If $r_1,r_2\geq r_0$, then $\angle(C_\mathrm{max};C_0,C_\mathrm{min}) \lesssim \kappa^{1/2}$;
        \item  If $r_1,r_2\leq r_0$, then $ \angle(C_\mathrm{min};C_0,C_\mathrm{max}) \lesssim \kappa^{1/2}$.
    \end{enumerate}
\end{lemma}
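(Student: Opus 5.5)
We will reduce to an estimate on the centres $x_i$ and radii $r_i$. By symmetry (or by a reflection/inversion of the roles), it suffices to treat case a), $r_1, r_2 \geq r_0$; case b) is handled by the same computation with signs reversed. Write $\rho_i := r_i - r_0 \geq 0$ for $i = 1,2$, and note that $\tang{C_0}{C_i} \leq \delta$ means $\big||x_i - x_0| - \rho_i\big| \leq \delta$. So we may write
\begin{equation*}
x_i - x_0 = (\rho_i + \epsilon_i) e_i, \qquad e_i \in S^1,\ |\epsilon_i| \leq \delta .
\end{equation*}
Here $e_i$ is, up to sign, the direction $\sgn(r_i - r_0)(x_i - x_0)$. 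These directions point from $x_0$ towards the (approximate) tangency points $\omega(C_i;C_0)$. The angle $\angle(C_{\max};C_0,C_{\min})$ is the angle between $e_{\max}$ and $\sgn(r_{\max}-r_{\min})(x_{\max} - x_{\min})$. Assume without loss of generality that $C_{\max} = C_1$, so $\rho_1 \geq \rho_2$.

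The key step is the law-of-cosines computation. Let $\theta$ be the angle between $e_1$ and $e_2$. Then
\begin{equation*}
|x_1 - x_2|^2 = (\rho_1+\epsilon_1)^2 + (\rho_2+\epsilon_2)^2 - 2(\rho_1+\epsilon_1)(\rho_2+\epsilon_2)\cos\theta ,
\end{equation*}
whereas $(r_1-r_2)^2 = (\rho_1-\rho_2)^2$. Subtracting, we get
\begin{equation*}
|x_1-x_2|^2 - (r_1-r_2)^2 = 2\rho_1\rho_2(1-\cos\theta) + O(\delta),
\end{equation*}
using that all quantities are $O(1)$ for unit scale circles. The left side equals $\big(|x_1-x_2| - |r_1-r_2|\big)\big(|x_1-x_2|+|r_1-r_2|\big)$, which is at most $2\,\tang{C_1}{C_2}\,\dist(C_1,C_2)$. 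By hypothesis \eqref{eq:clamshellhyp1}, this is at most $\kappa\,\dist(C_1,C_2)^2$, and also $\delta \leq \kappa\,\dist(C_1,C_2)$. The combined error is therefore $\lesssim \kappa\,\dist(C_1,C_2)$, which yields
\begin{equation*}
\rho_1\rho_2\,\theta^2 \lesssim \kappa\,\dist(C_1,C_2).
\end{equation*}
(The $O(\delta)$ error is sharpened to $O(\delta\,\dist(C_1,C_2))$ if needed, by expanding more carefully. This is the term to watch.)

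The final step converts this into a bound on the desired angle. Consider the triangle with vertices $x_0, x_1, x_2$. The angle at $x_1$, between $x_1 - x_0$ and $x_1 - x_2$, is related to $\theta$ by the sine rule:
\begin{equation*}
\sin(\text{angle at } x_1) = |x_2-x_0|\sin\theta/|x_1-x_2| \approx \rho_2\theta/\dist(C_1,C_2).
\end{equation*}
The desired angle is this angle (the signs $\sgn(r_1-r_0)=\sgn(r_1-r_2)=+1$ agree), unless the triangle is nearly degenerate with $x_2$ beyond $x_1$, which is excluded since $\rho_1\ge\rho_2$ and $\tang{C_1}{C_2}\le\kappa\dist(C_1,C_2)$. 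We therefore need
\begin{equation*}
\rho_2\theta \lesssim \kappa^{1/2}\dist(C_1,C_2).
\end{equation*}
From the previous display, $\rho_2\theta \lesssim (\rho_2/\rho_1)^{1/2}(\kappa\,\dist)^{1/2}$, so it remains to show $\rho_2/\rho_1 \lesssim \dist(C_1,C_2)$. Either $\rho_1 \gtrsim 1$, or $\rho_1$ is small. If $\rho_1$ is small, then $\dist(C_1,C_2) \geq |r_1 - r_2|$ is not automatically comparable to $\rho_2$. So we split according to whether $\rho_2 \leq \dist(C_1,C_2)$, which is trivial, or not; in the latter case $\theta$ itself is forced small directly.

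The main obstacle is precisely this bookkeeping of degenerate regimes: small $\rho_i$ (nearly concentric with $C_0$), and controlling the $O(\delta)$ errors relative to $\dist(C_1,C_2)$ rather than absolutely. The hypothesis $\delta \leq \kappa\,\dist(C_1,C_2)$ is exactly what is available to absorb them.
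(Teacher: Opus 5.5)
The gap is in your last step. You carry forward the weak bound $\rho_1\rho_2\theta^2\lesssim\kappa\,\dist(C_1,C_2)$ and reduce the lemma to proving $\rho_2/\rho_1\lesssim\dist(C_1,C_2)$. That inequality is false in exactly the regime the lemma is about. Take an exact clamshell with collinear centres, $\rho_1=1/4$ and $\rho_2=1/4-t$. Then $\dist(C_1,C_2)=t$ is small while $\rho_2/\rho_1\approx 1$. This is also the situation in Lemma~\ref{lem:L1tang}, where $\dist(C_1,C_2)$ can be far smaller than $\rho$.

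Your case split does not repair this. When $\rho_2>\dist(C_1,C_2)$ you need the quantitative bound $\theta\lesssim\kappa^{1/2}\dist(C_1,C_2)/\rho_2$. The weak bound only gives $\theta\lesssim(\kappa\,\dist(C_1,C_2)/\rho_1\rho_2)^{1/2}$, which is worse by a factor of about $\dist(C_1,C_2)^{-1/2}$ when $\rho_1\approx\rho_2$. Asserting that $\theta$ is ``forced small directly'' supplies nothing of the required strength, so as written the argument does not close.

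The missing ingredient is the sharpened error you mention only in a parenthesis. It must be obtained by keeping the true side lengths $a=|x_1-x_0|$ and $b=|x_2-x_0|$ rather than $\rho_1\rho_2$: replacing $ab$ by $\rho_1\rho_2$ costs an $O(\delta)$ error that is not $O(\delta\,\dist(C_1,C_2))$.

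\begin{itemize}
\item Exactly, $2ab(1-\cos\theta)=|x_1-x_2|^2-(a-b)^2$.
\item Also $a-b\ge r_1-r_2-2\delta\ge \dist(C_1,C_2)-\tang{C_1}{C_2}-2\delta\ge(1-2\kappa)\dist(C_1,C_2)$.
\item Together these give $ab\,\theta^2\lesssim\kappa\,\dist(C_1,C_2)^2$.
\item The sine rule then yields $\sin\phi\le b\theta/\dist(C_1,C_2)\lesssim(b/a)^{1/2}\kappa^{1/2}$ for the angle $\phi$ at $x_1$.
\item For $\kappa$ small (otherwise there is nothing to prove) you have $b<a$. This gives both $b/a\le 1$ and that $\phi$ is acute, hence $\phi\lesssim\kappa^{1/2}$.
\end{itemize}

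Notice, though, that the inequality $0\le\dist(C_1,C_2)-(a-b)\le\tang{C_1}{C_2}+2\delta$ already controls $\phi$ without any detour through $x_0$. This is the paper's proof. It applies Lemma~\ref{lem:quantitativetriineq} at the vertex $x_1$, with $u=x_1$, $v=x_2$, $w=x_0$, to get $\dist(C_1,C_2)(1-\cos\phi)\le|x_2-x_0|+|x_2-x_1|-|x_1-x_0|\le\tang{C_1}{C_2}+2\delta$. The factor $\dist(C_1,C_2)$ then appears automatically on the left. The degenerate regimes you were worried about (small $\rho_i$, obtuse angles, the sine ambiguity) never arise.
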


\begin{figure}
    \centering
    \begin{subfigure}[t]{0.48\textwidth}
        \centering
        \includegraphics[height=5.5cm]{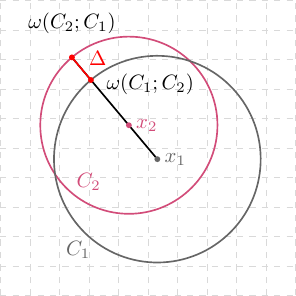}
        \captionsetup{width=0.95\textwidth, singlelinecheck=off}
        \caption{\footnotesize{Points $\omega(C_1;C_2)$ and $\omega(C_2;C_1)$ of $\delta$-tangency. The length of the line segment between these points is $\Delta = \Delta(C_1, C_2)$.}}
        \label{fig: omega}
    \end{subfigure}%
    ~ \quad
    \begin{subfigure}[t]{0.48\textwidth}
        \centering
        \includegraphics[height=5.5cm]{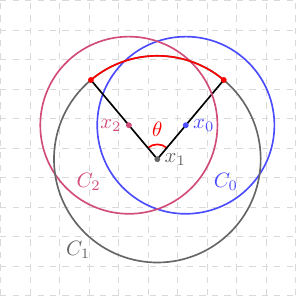}
        \captionsetup{width=0.95\textwidth, singlelinecheck=off}
        \caption{\footnotesize{The angle $\theta = \angle(C_1;C_0,C_2)$ is the central angle of the arc of $C_1$ between $\omega(C_1; C_0)$ and $\omega(C_1; C_2)$.}}
        \label{fig: angle}
    \end{subfigure}%
    \caption{Definitions featured in Lemmas~\ref{lem:intersectingannuli} and~\ref{lem:clamshell}.}
\end{figure}

Lemma~\ref{lem:clamshell} is our continuum analogue of Lemma~\ref{lem: unique clam disc}:
\begin{itemize}
    \item The hypothesis \eqref{eq:clamshellhyp1} of Lemma~\ref{lem:clamshell} is a quantified version of the pairwise tangent hypothesis in Lemma~\ref{lem: unique clam disc}.
    \item The conclusion of Lemma~\ref{lem:clamshell} tells us that the points of tangency must lie on some common arc of either $C_{\mathrm{max}}$ or $C_{\mathrm{min}}$ of bounded length. This is a quantified version of the conclusion of Lemma~\ref{lem: unique clam disc} that all three circles have a common point of tangency.
\end{itemize}

Unsurprisingly, the proof of Lemma~\ref{lem:clamshell} is based on elementary geometry. We exploit the following quantified form of the triangle inequality.

\begin{lemma}[Quantitative triangle inequality] \label{lem:quantitativetriineq}
    For $u$, $v$, $w \in \R^2$ with $u \notin \{v, w\}$, we have
    \begin{equation}
        |w-v| + |v-u| - |w-u| \geq |v-u|(1- \cos\theta) \quad \textrm{where} \quad \theta \coloneq \angle(v-u,w-u). \label{eq:quantitative1}
    \end{equation}
    Furthermore, if $|w-v| \geq |v -u|$, then
    \begin{equation*}
        |w-v| + |v-u| - |w-u| \sim |v-u|(1- \cos\theta).
    \end{equation*}
\end{lemma}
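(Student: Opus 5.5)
The plan is to reduce to a one-variable computation by placing the angle at $u$. Set $a \coloneq |v-u| > 0$ and $b \coloneq |w-u|$. By the law of cosines,
\begin{equation*}
|w-v|^2 = a^2 + b^2 - 2ab\cos\theta.
\end{equation*}
If $b = 0$, then $\theta$ is undefined. Interpreting the statement appropriately (or with $w \neq u$ understood), we may assume $b > 0$.

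We write $D \coloneq |w-v| + a - b$. We want $D \geq a(1-\cos\theta)$, which is equivalent to $|w-v| \geq b - a\cos\theta$. If the right-hand side is negative, this is trivial. Otherwise, we square both sides. The left-hand side becomes $a^2 + b^2 - 2ab\cos\theta$, and the right-hand side becomes $b^2 - 2ab\cos\theta + a^2\cos^2\theta$. Their difference is $a^2\sin^2\theta \geq 0$, which proves \eqref{eq:quantitative1}. Geometrically, $b - a\cos\theta$ is the component of $w - v$ along the direction of $w-u$, so the bound is just the projection inequality.

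For the reverse bound under $|w-v| \geq a$, I will use the identity
\begin{equation*}
|w-v| - (b - a\cos\theta) = \frac{a^2\sin^2\theta}{|w-v| + b - a\cos\theta}.
\end{equation*}
This is valid whenever the denominator is positive. The degenerate case where it vanishes forces $|w-v| = 0$ and so is excluded, since $|w-v| \geq a > 0$.

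It then follows that
\begin{equation*}
D = a(1-\cos\theta) + \frac{a^2\sin^2\theta}{|w-v| + b - a\cos\theta}.
\end{equation*}
Using $\sin^2\theta = (1-\cos\theta)(1+\cos\theta) \leq 2(1-\cos\theta)$, it suffices to show that the denominator is $\gtrsim a$.

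The main point is bounding this denominator from below using $|w-v| \geq a$. By the triangle inequality, $b - a\cos\theta \geq b - a \geq -|w-v|$, which is not quite enough. Instead, I split into two cases:
\begin{enumerate}[i)]
    \item If $b - a\cos\theta \geq 0$, then the denominator is at least $|w-v| \geq a$.
    \item If $b - a\cos\theta < 0$, then $\cos\theta > 0$ and $b < a\cos\theta \leq a$. In this case I avoid the identity and use the trivial bound $D \leq 2|w-v|$, which fails to match $a(1-\cos\theta)$ only if $|w-v| \gg a$. So instead I note directly that $|w-v| \leq a + b \leq 2a$ and $D \leq |w-v| + a \leq 3a$. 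It therefore remains to show $1 - \cos\theta \gtrsim 1$. Here $|w-v|^2 = a^2 + b^2 - 2ab\cos\theta \geq a^2$ gives $b \geq 2a\cos\theta$, which combined with $b < a\cos\theta$ is impossible when $\cos\theta > 0$. So case ii) cannot occur at all.
\end{enumerate}

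Hence the denominator is always at least $a$, and
\begin{equation*}
a(1-\cos\theta) \leq D \leq a(1-\cos\theta) + 2a(1-\cos\theta) = 3a(1-\cos\theta),
\end{equation*}
which gives the claimed comparability.
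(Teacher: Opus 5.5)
Your proof is correct, but it takes a different route from the paper. The paper rewrites the law of cosines as $|w-v|^2 = (|w-u|-|v-u|)^2 + 2|v-u||w-u|(1-\cos\theta)$ and factorises it, in your notation, as $2ab(1-\cos\theta) = D\cdot(|w-v|+b-a)$. Both directions then follow at once. The second factor is at most $2b$ by the triangle inequality, and at least $b$ when $|w-v|\geq a$, which gives $a(1-\cos\theta)\leq D\leq 2a(1-\cos\theta)$.

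You instead compare $|w-v|$ with the projection $b-a\cos\theta=(w-v)\cdot\frac{w-u}{|w-u|}$. This makes the lower bound \eqref{eq:quantitative1} simply Cauchy--Schwarz, which is a transparent geometric reading. Your upper bound comes from the difference of squares $|w-v|^2-(b-a\cos\theta)^2=a^2\sin^2\theta$. The price is that you must control the denominator $|w-v|+b-a\cos\theta$. That needs the sign information $b-a\cos\theta\geq 0$, which follows from $b\geq 2a\cos\theta$, i.e.\ from $|w-v|\geq a$. You also end with the constant $3$ rather than $2$. The paper compares with $b-a$ rather than $b-a\cos\theta$, and so avoids this sign analysis entirely.

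A few small points in the write-up:
\begin{itemize}
\item Your claim that a vanishing denominator forces $|w-v|=0$ is false. Take $\theta=0$ and $0<b<a$: then $|w-v|=a-b>0$, yet $|w-v|+b-a\cos\theta=0$. This is harmless, since your case analysis shows the denominator is at least $a$ under $|w-v|\geq a$, and that is what you actually use.
\item The condition $b>0$ is already part of the hypothesis $u\notin\{v,w\}$, so no reinterpretation is needed.
\item Case ii) could be stated directly as vacuous, without the detour through $D\leq 3a$.
\end{itemize}
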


\begin{proof}
    By the law of cosines,
    \begin{equation*}
        |w-v|^2 = |v-u|^2 + |w-u|^2 -2|v-u||w-u|\cos \theta .
    \end{equation*}
    Completing the square, we then have
    \begin{equation*}
        |w-v|^2 = (|w-u| - |v - u|)^2 + 2|v-u||w-u|(1-\cos \theta ),
    \end{equation*}
    which rearranges and factorises to give
    \begin{equation}\label{eq: triangle 1}
        2|v-u||w-u|(1-\cos \theta) = (|w-v|-|w-u|+|v-u|)(|w-v|+|w-u|-|v-u|).
    \end{equation}
   
   On the one hand, by the triangle inequality, 
    \begin{equation}\label{eq: triangle 2}
        |w-v|+|w-u|-|v-u|\leq 2|w-u|.
    \end{equation}
    On the other hand, under the hypothesis $|w-v| \geq |v -u|$, we have
    \begin{equation}\label{eq: triangle 3}
        |w-v|+|w-u|-|v-u| \geq |w-u|
    \end{equation}
   Combining either \eqref{eq: triangle 2} or \eqref{eq: triangle 3} with \eqref{eq: triangle 1}, and removing common $|w-u|$ factors, concludes the proof.
\end{proof}

\begin{proof}[Proof (of Lemma~\ref{lem:clamshell})] Without loss of generality we may assume $r_1 = \max \{ r_1, r_2 \}$ and $r_2 = \min \{ r_1, r_2 \}$. We treat cases a) and b) of the lemma in parallel.\medskip

\noindent \underline{Case a): $r_1$, $r_2 \geq r_0$}. Let $\theta \coloneq \angle(C_1;C_0,C_2) =\angle (x_2-x_1, x_0-x_1)$. By the quantitative triangle inequality \eqref{eq:quantitative1} with $u \coloneq x_1$, $v \coloneq x_2$, $w \coloneq x_0$, we have
    \begin{equation}\label{eq: clam 1}
        |x_1-x_2|\theta^2 \lesssim |x_2-x_0|+|x_2-x_1|-|x_1-x_0|.
    \end{equation}
    By hypothesis, we have $||x_i-x_0|-(r_i-r_0)| \leq \delta$ for $i = 1$, $2$, and so 
    \begin{equation}\label{eq: clam 2}
        |x_2-x_0| \leq r_2-r_0 + \delta, \qquad \textrm{and} \qquad
        |x_1-x_0| \geq r_1-r_0 - \delta.
    \end{equation}

\noindent \underline{Case b): $r_1$, $r_2 \leq r_0$}. Let $\theta \coloneq \angle(C_2;C_0,C_1) =\angle (x_1-x_2, x_0-x_2)$. By the quantitative triangle inequality \eqref{eq:quantitative1} with $u \coloneq x_2$, $v \coloneq x_1$, $w \coloneq x_0$, we have
    \begin{equation}\label{eq: clam 3}
        |x_1-x_2|\theta^2 \lesssim |x_1-x_0|+|x_1-x_2|-|x_2-x_0|.
    \end{equation}
    By hypothesis, we have $||x_i-x_0|-(r_0-r_i)| \leq \delta$ for $i = 1$, $2$, and so 
    \begin{equation}\label{eq: clam 4}
         |x_1-x_0| \leq r_0-r_1 + \delta, \qquad \textrm{and} \qquad |x_2-x_0| \geq r_0-r_2 - \delta.
    \end{equation}
    
Combining either \eqref{eq: clam 1} and \eqref{eq: clam 2} in Case a) or \eqref{eq: clam 3} and \eqref{eq: clam 4} in Case b),
    \begin{equation*}
        |x_1-x_2|\theta^2 \lesssim |x_2-x_1| - (r_1-r_2) + 2\delta \leq \tang{C_1}{C_2} + 2 \delta.
    \end{equation*}
    Consequently,
    \[
    \theta^2 \lesssim \frac{\tang{C_1}{C_2} + \delta}{\dist(C_1,C_2)}.
    \]
    Finally, applying the hypothesis \eqref{eq:clamshellhyp1}, we deduce that $\theta^2 \lesssim \kappa$, as required.
\end{proof}




\subsection{Properties of the clamshell configuration} We now apply the diameter bounds for $C_1^{\delta} \cap C_2^{\delta}$ from Lemma~\ref{lem:intersectingannuli} in the special setting of an approximate clamshell configuration. Combining this with Lemma~\ref{lem:clamshell}, we are led to the following key geometric observation.

\begin{proposition} \label{prop:localisedclam}
    Let $0 < \delta$, $\rho$, $\kappa \leq 1$ with $\delta \leq \min\{1/2, \rho \kappa\}$. Suppose $C_i \subset \R^2$ for $i = 0$, $1$, $2$ are unit scale circles such that
    \begin{enumerate}[i)]
        \item $\tang{C_0}{C_i} \leq \delta$, $i = 1$, $2$, and $\tang{C_1}{C_2} + \delta \leq \kappa \cdot \dist(C_1,C_2)$;
    \item $\rho/2 \leq \dist(C_0,C_1)$, $\dist(C_0,C_2) \leq \rho$;
    \item Either $r_1,r_2\geq r_0$ or $r_1,r_2\leq r_0$.
    \end{enumerate}
    Then there is an absolute constant $K_{\circ} \geq 1$ such that
    \begin{equation*}
        C_1^{\delta} \cap C_2^{\delta} \subseteq C_0^\lambda \qquad \textrm{where} \qquad \lambda \coloneq \lambda(\rho,\kappa) \coloneq K_{\circ} \rho \kappa.
    \end{equation*}
\end{proposition}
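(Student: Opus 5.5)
Take $y \in C_1^{\delta} \cap C_2^{\delta}$; the task is to show $\dist(y, C_0) \lesssim \rho\kappa$. The picture is that $y$ lies near both points of near-tangency $\omega(C_1;C_0)$ and $\omega(C_1;C_2)$, these two points are close by Lemma~\ref{lem:clamshell}, and along $C_1$ the circle $C_0$ stays within $O(\rho\kappa)$ of $C_1$ near $\omega(C_1;C_0)$ because it is $\delta$-tangent to $C_1$ with $\dist(C_0,C_1)\sim\rho$.

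\emph{Step 1: localise $y$ on $C_1$.} Apply Lemma~\ref{lem:intersectingannuli} a) to the pair $(C_1,C_2)$. Then $y$ lies within $\delta$ of a point $z\in C_1$ with $|z-\omega(C_1;C_2)| \lesssim ((\tang{C_1}{C_2}+\delta)/\dist(C_1,C_2))^{1/2} \le \kappa^{1/2}$ by hypothesis i). By the symmetric statement with the roles of $C_1,C_2$ swapped, we may equally localise $y$ near $\omega(C_2;C_1)$ on $C_2$. In case a) ($r_1,r_2\ge r_0$) use $C_{\max}$, and in case b) use $C_{\min}$, so that the angle controlled by Lemma~\ref{lem:clamshell} is the one at the chosen circle; relabel it as $C_1$. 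Lemma~\ref{lem:clamshell} applies since i) is exactly its hypothesis \eqref{eq:clamshellhyp1}, and it gives $|\omega(C_1;C_0)-\omega(C_1;C_2)| \lesssim \kappa^{1/2}$. Hence $z$ lies on an arc of $C_1$ of length $O(\kappa^{1/2})$ centred at $\omega(C_1;C_0)$.

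\emph{Step 2: the separation of $C_0$ and $C_1$ along this arc.} Parametrise $C_1$ by the angle $t$ from $\omega(C_1;C_0)$, and compute $\dist(z(t),C_0) = \big||z(t)-x_0| - r_0\big|$. Since $\tang{C_0}{C_1}\le\delta$, with $|x_1-x_0| =: \rho_1\in[\rho/2,\rho]$ and $|r_1-r_0| = \rho_1+O(\delta)$, the law of cosines gives
\[
|z(t)-x_0|^2 = r_1^2 + \rho_1^2 - 2r_1\rho_1\cos t ,
\]
where $t=0$ corresponds to the near-tangency point. Expanding, $|z(t)-x_0| = |r_1-\rho_1| + O(\rho_1 t^2)$, using that $r_1-\rho_1\gtrsim 1$ for unit scale circles (Remark~\ref{rmk: no ext tang}). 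Therefore
\[
\dist(z(t),C_0) \lesssim \delta + \rho t^2 .
\]
This is the quantitative fact that two $\delta$-tangent circles with centres $\rho$ apart separate quadratically, at rate $\rho$.

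\emph{Step 3: conclude.} Take $|t|\lesssim\kappa^{1/2}$ from Step 1. Then
\[
\dist(y,C_0) \le \delta + \dist(z,C_0) \lesssim \delta + \rho\kappa \lesssim \rho\kappa ,
\]
using $\delta\le\rho\kappa$. Choosing $K_\circ$ large gives $y\in C_0^\lambda$.

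\emph{Main obstacle.} The difficulty lies in the bookkeeping of Step 1. Lemma~\ref{lem:clamshell} controls the angle only at $C_{\max}$ (case a) or $C_{\min}$ (case b), so Lemma~\ref{lem:intersectingannuli} a) must be applied with that same circle in the first slot. One must also check that the arc centred at $\omega(C_1;C_2)$ and the point $\omega(C_1;C_0)$ are measured in the same angular coordinate, which holds up to the $\sgn$ conventions in the definitions of $\omega$ and $\angle$; these signs should be verified carefully in each case. Hypothesis iii) is exactly what makes the two tangency points lie on the same side, so that Lemma~\ref{lem:clamshell} applies. A further point to check is that $\rho\ge\dist(C_1,C_2)/2$, which makes the bound $\kappa^{1/2}$ from Lemma~\ref{lem:intersectingannuli} compatible with the $\rho$ scale. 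This is not actually needed, however, since Step 2 uses $\rho t^2$ with $t^2\lesssim\kappa$ regardless.
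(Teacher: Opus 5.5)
Your argument is correct and is essentially the paper's proof. You localise $C_1^\delta\cap C_2^\delta$ to an $O(\kappa^{1/2})$-arc of $C_1$ about $\omega(C_1;C_2)$ via Lemma~\ref{lem:intersectingannuli}~a), recentre at $\omega(C_1;C_0)$ via Lemma~\ref{lem:clamshell}, and then show this arc's $\delta$-neighbourhood lies in $C_0^\lambda$; the only difference is that for this last step the paper invokes Lemma~\ref{lem:intersectingannuli}~b), whereas you verify it directly by the law of cosines, which is what that lemma's proof does anyway. One slip to fix: your formula $|z(t)-x_0|^2=r_1^2+\rho_1^2-2r_1\rho_1\cos t$ holds only when $r_1>r_0$, and there the bound $r_1-\rho_1\ge r_0-\delta\ge 1/2$ comes from $\tang{C_0}{C_1}\le\delta$, not from Remark~\ref{rmk: no ext tang}; when $r_1<r_0$, $\omega(C_1;C_0)$ is the point of $C_1$ farthest from $x_0$, so the cosine term changes sign and you compare $|z(t)-x_0|=r_1+\rho_1-O(\rho_1t^2)$ with $r_0=r_1+\rho_1+O(\delta)$, which gives the same bound $\delta+\rho t^2$.
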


To better understand the statement of this proposition, let $(x_i, r_i) \in \R^2 \times (0, \infty)$ for $i  = 0$, $1$, $2$ and $0 < \delta \leq \rho \leq 1$ and suppose $\tang{C_0}{C_j} \leq \delta$ and $\dist(C_0, C_j) \leq \rho$ for $j = 1$, $2$. It follows from the triangle inequality that $|r_j - r_0| \leq |x_j - x_0| + \delta$ and, consequently,
\begin{equation*}
||y - x_0| - r_0| \leq ||y - x_j| - r_j| + |r_j - r_0| + |x_j - x_0| \leq ||y - x_j| - r_j| + 3 \rho.      
\end{equation*}
We therefore have $C_j^{\delta} \subseteq C_0^{\,4\rho}$ for $j = 1$, $2$. Thus, \textit{a fortiori}, $C_1^{\delta} \cap C_2^{\delta} \subseteq C_0^{\,4\rho}$.

The above proposition shows that, under the stated hypotheses, the intersection $C_1^{\delta} \cap C_2^{\delta}$ lies in a thinner annulus, of width $O(\kappa \rho)$. For $\kappa$ small, this is a gain over the trivial containment described above. 

\begin{proof}[Proof (of Proposition~\ref{prop:localisedclam})] We shall assume $K_{\circ} \kappa \leq 4$, since otherwise the result trivially holds by the above discussion.

Hypotheses i) and iii) match those of Lemma~\ref{lem:clamshell} and therefore $C_0$, $C_1$, $C_2$ are in an approximate clamshell configuration. That is, for some $(i, j) \in \{(1,2), (2,1)\}$, we have $\angle(C_i;C_0,C_j) \lesssim \kappa^{1/2}$. Without loss of generality, by relabelling, 
    \begin{equation}
      \angle(C_1;C_0,C_2) \lesssim \kappa^{1/2}.
        \label{eq:localisedclam 1}
    \end{equation}

 On the one hand, provided $K_{\circ} \geq 4$, the hypothesis $\delta \leq \rho \kappa$ implies that $\lambda \geq 4 \delta$. By hypothesis i) we also have $\Delta(C_0,C_1) \leq \delta$. Thus, by Lemma~\ref{lem:intersectingannuli} b), the intersection $C_1^{\delta} \cap C_0^{\lambda}$ contains the $\delta$-neighbourhood $\Gamma^{\delta}$ of an arc $\Gamma$ of $C_1$, centred at $\omega(C_1;C_0)$, satisfying 
    \begin{equation}
        \operatorname{length}(\Gamma) \gtrsim \min\bigg\{\Brac{\frac{\lambda}{\dist(C_0,C_1)+ \delta} }^{1/2}, 1 \bigg\} \gtrsim K_{\circ}^{1/2}\kappa^{1/2}.
        \label{eq:localisedclam 2}
    \end{equation}

    On the other hand, by Lemma~\ref{lem:intersectingannuli} a) and hypothesis i), the intersection $C_1^{\delta} \cap C_2^{\delta}$ is contained in the $\delta$-neighbourhood $\gamma^{\delta}$ of an arc $\gamma$ of $C_1$, centred at $\omega(C_1;C_2)$, with
    \begin{equation}
        \operatorname{length}(\gamma) \lesssim \Brac{\frac{\tang{C_1}{C_2} + \delta}{\dist (C_1,C_2) + \delta}}^{1/2} \lesssim \kappa^{1/2}. \label{eq:localisedclam 3}
    \end{equation}
    
    Note that the angle between the centres $\omega(C_1;C_2)$ and $\omega(C_1;C_0)$ is $\angle(C_1;C_0,C_2)$, which is $O(\kappa^{1/2})$ by \eqref{eq:localisedclam 1}. Comparing the lengths in \eqref{eq:localisedclam 3} and \eqref{eq:localisedclam 2}, it follows that
    \begin{equation*}
        C_1^{\delta} \cap C_2^{\delta} \subseteq \gamma^{\delta} \subseteq \Gamma^{\delta} \subseteq C_0^{\lambda},
    \end{equation*}
    provided $K_{\circ} \geq 1$ is sufficiently large. This concludes the proof. 
\end{proof}




\subsection{Lower bounds for the arc length}\label{sec: length proof} Here we discuss the geometric arguments used to lower bound the length of the intersection arc in Lemma~\ref{lem:intersectingannuli} b). The proof is based on the quantitative triangle inequality from Lemma~\ref{lem:quantitativetriineq}. 

\begin{proof}[Proof (of Lemma~\ref{lem:intersectingannuli} b))] We shall only consider the case $r_1 \geq r_2$; the complementary case can be treated using similar arguments.

Let $(x_i, r_i) \in \bbB^2 \times [1, 2]$ be such that $C_i = C(x_i,r_i)$ for $i = 1$, $2$. If $y \in C_1^{\delta}$, then note that
    \begin{equation}\label{eq: lower 1 1}
        |y - x_2| \geq  r_2 + |y - x_1| - r_1 - |x_1 - x_2| + (r_1 - r_2) > r_2 - \lambda/2. 
    \end{equation}
    
    Suppose $\# \big[C_1 \cap C_2^{\lambda/2, +}\big] \leq 1$ where $C_2^{\lambda/2, +} \coloneq C(x_2, r_2 + \lambda/2)$. Since 
    \begin{equation*}
        \omega(C_1;C_2) \in C_1 \cap B(x_2, r_2 + \lambda/2),
    \end{equation*}
    it follows by continuity that $C_1 \subseteq \overline{B(x_2, r_2 + \lambda/2)}$. Thus, $C_1^{\delta} \subseteq B(x_2, r_2 + \lambda)$ and, combining this with \eqref{eq: lower 1 1}, we conclude that $C_1^{\delta} \subseteq C_2^{\lambda}$. In particular, in this case we may take $\Gamma = C_1$ with $\operatorname{length}(\Gamma) \sim 1$.\smallskip

    In light of the above, we may assume $C_1 \cap C_2^{\lambda/2, +}$ contains two points which must form the boundary of an open arc $\Gamma$ on $C_1$, centred at $\omega(C_1; C_2)$ with $\Gamma \subset C_2^{\lambda/2}$. By the triangle inequality, $C_2^{\lambda}$ contains the $\delta$-neighbourhood of $\Gamma$. Thus, matters are reduced to showing \eqref{eq: length}. 
    
Let $y \in C_1 \cap C_2^{\lambda/2, +}$, so that $|y - x_1| = r_1$ and $|y - x_2| = r_2 + \lambda/2$. We measure the angle $\theta \coloneq \angle(x_2-x_1,y-x_1)$. Since $C_1$, $C_2$ are unit scale,
    \begin{equation*}
        |x_1 - x_2| < 1 \leq r_2 < |y - x_2|.
    \end{equation*}
    Thus, by Lemma~\ref{lem:quantitativetriineq} with $u \coloneq x_1$, $v \coloneq x_2$, $w \coloneq y$, we get
    \begin{equation*}
        \theta^2 |x_1 - x_2|
        \sim
        |y - x_2| + |x_1 - x_2| - |y - x_1| 
        \geq \lambda/2 - \Delta(C_1,C_2) 
        \geq \lambda/4.
    \end{equation*}
Since $\operatorname{length}(\Gamma) \sim \theta$, this implies \eqref{eq: length}, concluding the proof in this case.
\end{proof}



\section{The main argument}\label{sec: main argument}

In this section, we combine the Fourier analytic results from \S\ref{sec: Stein} and the geometric results from \S\ref{sec:geometriclemmas} to conclude the proof of Theorem~\ref{thm: Bourgain}.




\subsection{High level strategy} Following the reductions from \S\ref{sec: Fourier prelim} and writing $q = p'$, the goal is  to prove the following dual bound.

\begin{proposition} \label{prop:bourgaindual}
    For all $1<q<2$ there exists some exponent $\varepsilon(q') > 0$ such that
    \begin{equation}
        \NormLpRd{\sum_{C \in \cC} \Chi_{C^\delta} }{q}{2} \lesssim_q \delta^{2/q-1+\varepsilon(q')} [\# \cC]^{1/q} 
        \label{eq:bourgainreduced}
    \end{equation}
    holds for $0<\delta \leq 1/2$ dyadic and $\cC$ any $\delta$-separated set of unit scale circles.
\end{proposition}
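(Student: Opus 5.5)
Here is how I would prove Proposition~\ref{prop:bourgaindual}. The idea is Bourgain's dichotomy between \emph{few} and \emph{many} near-tangencies, with the two cases handled by Fourier analysis and by geometry respectively.

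\textbf{Setting up the dichotomy.} Fix small parameters $\kappa = \delta^{a}$ and $\rho$ (dyadic, $\delta \leq \rho \leq 1$), and let $\mu$ be a multiplicity threshold $\delta^{-b}$. For $C_0 \in \cC$, count the circles $C \in \cC$ with $\tang{C_0}{C}\leq\delta$ and $\dist(C_0,C)\sim\rho$. After pigeonholing over the $O(\log\delta^{-1})$ dyadic values of $\rho$, I would split $\cC$ into two parts. The part $\cC_{\mathrm{good}}$ consists of circles with few $\delta$-tangent partners at every scale, namely at most $\mu\,\rho^{1/2}$ of them (or whatever the $L^2$ computation dictates). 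The part $\cC_{\mathrm{bad}}$ consists of the circles with many such partners. The bound \eqref{eq:bourgainreduced} for $\sum_{\cC}$ then follows from the triangle inequality applied to the two parts.

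\textbf{Good part: $L^2$ via weak orthogonality, then interpolation.} I would expand $\|\sum_{\cC_{\mathrm{good}}} \Chi_{C^\delta}\|_2^2$ exactly as in the proof of Proposition~\ref{prop:steinlpdualp2}. By Lemma~\ref{lem:weakorthogonality}, pairs with $\tang{C_1}{C_2}\gtrsim \delta^{1-\epsilon}$ contribute negligibly. The remaining pairs are $\delta$-tangent at distance $\sim\rho$ and contribute $\delta(\rho/\delta)^{-1/2}$ each. With the multiplicity bound, this gives $\|\cdot\|_2^2 \lesssim \delta^{2-c}\#\cC$ with a gain $c<1$ depending on $\mu$, i.e.\ better than the trivial $\delta\cdot\delta^{-1/2}\cdots$. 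Interpolating this with the trivial $L^1$ bound $\|\sum \Chi_{C^\delta}\|_1 \lesssim \delta\#\cC$ from Lemma~\ref{lem: Lp Chi} yields \eqref{eq:bourgainreduced} for $1<q<2$, with a positive $\varepsilon(q')$ provided the $L^2$ gain is a positive power of $\delta$.

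\textbf{Bad part: geometry.} Each $C_0\in\cC_{\mathrm{bad}}$ has $\gtrsim \mu\rho^{1/2}$ $\delta$-tangent partners at distance $\sim\rho$. I would pass to absolute values via the essential-support heuristic, replacing $|\Chi_{C^\delta}|$ by $\chi_{C^\delta}$ up to Schwartz tails (Lemma~\ref{lem:essentialsupp}), and then bound $\|\sum\chi_{C^\delta}\|_q$. Proposition~\ref{prop:localisedclam} gives the key structure: partners $C_1,C_2$ of $C_0$ that are mutually transversal at level $\kappa$ (and on the same side in radius) intersect only inside the thin annulus $C_0^{\lambda}$, where $\lambda = K_\circ\rho\kappa$. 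The overlaps of the partner annuli therefore concentrate in $C_0^{\lambda}$, of area $\lambda$. Corollary~\ref{cor:intersectingannuli} bounds how much each $C_1^\delta$ sits inside it, namely $\delta\lambda^{1/2}\rho^{-1/2}$. Pairs that are $\kappa$-tangent to each other contribute little by Lemma~\ref{lem:intersectingannuli}~c), since they are few or themselves clamshell-like.

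From this I would derive an estimate showing that $\sum_{\cC_{\mathrm{bad}}}\chi_{C^\delta}$ has small $L^q$ norm relative to $\delta^{2/q-1}[\#\cC]^{1/q}$. The mechanism is a counting argument: many tangencies force the bad circles to be few, or the mass to be highly concentrated in thin annuli. I expect to do this through an $L^2$ computation with $|C_1^\delta\cap C_2^\delta|$ from \eqref{eq: int annuli area}, summed over pairs, using the clamshell structure to improve the count of near-tangent pairs. That improvement would be a double counting of triples $(C_0,C_1,C_2)$.

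\textbf{Main obstacle.} The hardest step is this bad-case combinatorics. One has to turn Proposition~\ref{prop:localisedclam} into a quantitative incidence or multiplicity bound, and then balance the exponents $a$, $b$ and the pigeonholed $\rho$ so that both cases gain the same positive power $\delta^{\varepsilon}$. My first attempt would be a Córdoba-type $L^2$ argument on $\cC_{\mathrm{bad}}$, followed by optimizing the parameters.
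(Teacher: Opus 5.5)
Your treatment of the tangency-poor part is essentially the paper's Lemma~\ref{lem:transversecase}, although the gain should read $\|\cdot\|_2^2 \lesssim \delta^{c}\,\#\cC$, not $\delta^{2-c}\#\cC$. The genuine gap is in the tangency-rich part. There you pass to $\chi_{C^\delta}$ (or $|\Chi_{C^\delta}|$) and bound an $L^q$ norm, and no argument of that kind can produce the factor $\delta^{\varepsilon}$. All the annuli lie in $B(0,3)$, so by H\"older $\|\sum_{C\in\cC_{\mathrm{bad}}}\chi_{C^\delta}\|_{L^q}\gtrsim\|\sum_{C\in\cC_{\mathrm{bad}}}\chi_{C^\delta}\|_{L^1}\sim\delta\,\#\cC_{\mathrm{bad}}$. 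This already equals $\delta^{2/q-1}[\#\cC]^{1/q}$ once $\#\cC_{\mathrm{bad}}\sim\#\cC\sim\delta^{-2}$, and that case does occur. Take $\cC\coloneq\{C(x,3/2-|x|): x\in\delta\Z^2\cap\bbB^2\}$. Every circle is internally tangent to $C(0,3/2)$ and has $\gtrsim\delta^{-3/2}$ $\delta$-tangent partners at centre distance $\sim 1$. Hence $\cC_{\mathrm{bad}}=\cC$ for any threshold that still leaves room for your $L^2$ gain on $\cC_{\mathrm{good}}$. For the same reason, the $L^2$ computation you envisage on $\cC_{\mathrm{bad}}$ cannot gain, since these tangent pairs alone give $\sum_{C_1,C_2}|C_1^\delta\cap C_2^\delta|\gtrsim\#\cC$. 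As stressed in \S\ref{subsec: geom comparison}, a $\delta^{\varepsilon}$ gain must come from cancellation in the $\Chi_{C^\delta}$, in the bad part just as in the good part.

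The paper's mechanism is an $L^1$ gain for the \emph{oscillatory} sum over each cluster (Lemma~\ref{lem:L1tang}), interpolated against Stein's gainless but oscillatory bound \eqref{eq:L2bound}. To set this up, the partition is built greedily, so that $\cC_{\mathrm{tang}}$ is a disjoint union of clusters $\cA$. Each cluster is $\delta^{1-\eta_\circ}$-tangent to a single $C_0$ and has $\#\cA\geq\delta^{-4/3+4\varepsilon_\circ}$. Your static ``many partners'' definition does not yield such a disjoint decomposition. For a cluster, the part of $\sum\Chi_{C^\delta}$ on $C_0^{2\lambda}$ is handled directly in $L^1$ via Corollary~\ref{cor:intersectingannuli}. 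The rest is moved to $L^2$ by Cauchy--Schwarz on $C_0^{4\rho}$ and expanded bilinearly against the weight $u=(1-\psi_{C_0^\lambda})^2$.

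You also have Proposition~\ref{prop:localisedclam} backwards. Its hypothesis $\tang{C_1}{C_2}+\delta\leq\kappa\dist(C_1,C_2)$ says that $C_1$ and $C_2$ are near-\emph{tangent} to each other. It is those pairs whose overlap is confined to $C_0^{\lambda}$ and is therefore killed by $u$. The mutually transversal pairs are the hard ones. Their absolute overlaps are necessarily too large to sum, which is forced because $\|\sum_{C \in \cA}\chi_{C^\delta}(1-\psi_{C_0^\lambda})\|_1\sim\delta\,\#\cA$. They are controlled only by the oscillatory Claim in Step~7. That claim is a weak-orthogonality estimate with the smooth cutoff $u$, and it supplies the extra factor $\lambda^{-1}\delta^{1-\eta_\circ}/\tang{C_1}{C_2}$.
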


Fixing $\cC$ and $0 < \delta \leq 1/2$ as above, we can think of $(a_C)_{C \in \cC} \mapsto \sum_{C \in \cC} a_C \Chi_{C^{\delta}}$ as a linear operator mapping complex sequences indexed by $\cC$ to $L^q$ functions. The inequality \eqref{eq:bourgainreduced} then implies a restricted strong-type inequality for this operator, in the sense that it gives a bound for the restricted class of binary sequences $(a_C)_{C \in \cC}$ with $a_C \in \{0,1\}$. Assuming Proposition~\ref{prop:bourgaindual} holds, we can use restricted weak-type interpolation (see \cite[Theorem 1.4.19]{Grafakos_book}) to upgrade  \eqref{eq:bourgainreduced} to a strong-type bound for the same (open) range of exponents. In particular, for all $1<q<2$, there exists some $\varepsilon(q') > 0$ such that 
    \begin{equation*}
        \NormLpRd{\sum_{C \in \cC} a_C \Chi_{C^\delta} }{q}{2} \lesssim_q \delta^{2/q-1+\varepsilon(q')} \Brac{\sum_{C \in\cC} |a_C|^q}^{1/q} 
    \end{equation*}
    for all $0<\delta \leq 1/2$ dyadic, $\cC$ any $\delta$-separated set of unit scale circles and $(a_C)_{C \in \cC}$ any complex sequence. We may combine this bound with Lemma~\ref{lem: duality}, Lemma~\ref{lem: loc op red} and Lemma~\ref{lem: loc to glob} to conclude the proof of Theorem~\ref{thm: Bourgain}. 

We begin by comparing Proposition~\ref{prop:bourgaindual} with existing bounds coming from our discussion of Stein's spherical maximal theorem in \S\ref{sec: Stein}. By Proposition~\ref{prop:steinlpdualp2}, we have
\begin{equation}
    \NormLpRd{\sum_{C \in \cC} \Chi_{C^\delta} }{2}{2} \lesssim  [\# \cC]^{1/2}.
        \label{eq:L2bound}
\end{equation}
On the other hand, by the $p = 1$ case of Lemma~\ref{lem: Lp Chi} we have
\begin{equation}
    \NormLpRd{\sum_{C \in \cC} \Chi_{C^\delta} }{1}{2} \leq \sum_{C \in \cC} \normLpRd{ \Chi_{C^\delta} }{1}{2} \lesssim  \delta \# \cC .
        \label{eq:L1bound}
\end{equation}
Using log-convexity of $L^q$ to interpolate between \eqref{eq:L2bound} and \eqref{eq:L1bound}, we deduce that
\begin{align*}
    \NormLpRd{\sum_{C \in \cC} \Chi_{C^\delta} }{q}{2} &\lesssim \delta^{2/q-1} [\# \cC]^{1/q} \quad \text{for } 1 \leq q \leq 2.
\end{align*}
This is \textit{almost} \eqref{eq:bourgainreduced}: the only difference is the additional $\varepsilon(q')$-power of $\delta$ in \eqref{eq:bourgainreduced}. Thus, the main issue is to establish an $\varepsilon$-improvement over either of the above estimates.

It is \textbf{not} possible to directly improve the $\delta$ powers on the right-hand side of either \eqref{eq:L2bound} or \eqref{eq:L1bound} for general circle families $\cC$.\footnote{For instance, establishing \eqref{eq:L2bound} with an additional $\delta^{\varepsilon}$ factor on the right-hand side would imply the (fallacious) $L^2$-boundedness of the circular maximal function.} Instead, the strategy is as follows. We partition $\cC = \cC_\mathrm{tang} \cup \cC_\mathrm{trans}$ where, roughly speaking, 
\begin{itemize}
    \item $\cC_\mathrm{trans}$ contains circles which form very few tangent pairs;
    \item $\cC_\mathrm{tang}$ contains the rest of the circles in $\cC$.
\end{itemize}
The precise definition of this partition is presented in the next subsection. We then show in \S\ref{sec:L2trans} and \S\ref{sec:L1tang} that, for some fixed $\varepsilon_{\circ} > 0$, we have
\begin{align}
\label{eq:L2boundeps}
    \NormLpRd{\sum_{C \in \cC_\mathrm{trans}} \Chi_{C^\delta} }{2}{2} &\lesssim  \delta^{\varepsilon_{\circ}}[\# \cC]^{1/2}, \\ 
         \label{eq:L1boundeps}       
    \NormLpRd{\sum_{C \in \cC_\mathrm{tang}} \Chi_{C^\delta} }{1}{2} & \lesssim  \delta^{1+\varepsilon_{\circ}} \# \cC .
\end{align}
We stress that, in both the above estimates, it is crucial to work with the stated subfamily $\cC_{\mathrm{trans}}$ or $\cC_{\mathrm{trans}}$: in general, neither \eqref{eq:L2boundeps} nor \eqref{eq:L1boundeps} holds with $\cC$ on the left-hand side. 

Once we have \eqref{eq:L2boundeps} and \eqref{eq:L1boundeps}, given $1 < q < 2$ we may apply the triangle inequality and logarithmic convexity to bound
\begin{multline*}
    \NormLpRd{\sum_{C \in \cC} \Chi_{C^\delta} }{q}{2} \leq \NormLpRd{\sum_{C \in \cC_\mathrm{tang}} \Chi_{C^\delta}}{2}{2}^{2-2/q} \NormLpRd{\sum_{C \in \cC_\mathrm{tang}} \Chi_{C^\delta}}{1}{2}^{2/q - 1}
    \\ +
    \NormLpRd{\sum_{C \in \cC_\mathrm{trans}} \Chi_{C^\delta}}{2}{2}^{2 - 2/q}\NormLpRd{\sum_{C \in \cC_\mathrm{trans}} \Chi_{C^\delta}}{1}{2}^{2/q-1}.
\end{multline*}
We combine the estimates \eqref{eq:L2bound} and \eqref{eq:L1boundeps} for the contribution from $\cC_\mathrm{tang}$, and we combine the estimates \eqref{eq:L2boundeps} and \eqref{eq:L1bound} for the contribution from $\cC_\mathrm{trans}$. Altogether, this gives a bound of
\begin{equation*}
    \NormLpRd{\sum_{C \in \cC} \Chi_{C^\delta} }{q}{2} \lesssim \delta^{2/q - 1 + \varepsilon(q')} [\# \cC]^{1/q}, \quad \textrm{for} \quad \varepsilon(q') \coloneq \varepsilon_{\circ} \cdot \min\bigg\{ \frac{2}{q} - 1, 2 - \frac{2}{q} \bigg\} > 0.
\end{equation*}
This establishes Proposition~\ref{prop:bourgaindual} and thereby completes the proof.




\subsection{Technical preliminaries} Throughout the rest of the article, we work with a pair of fixed, small exponents $0 < \eta_{\circ} \ll \varepsilon_{\circ} < 1$. In particular, it suffices to take
\begin{equation}\label{eq: eta eps exp}
    \eta_{\circ} \coloneq 10^{-4} \qquad \textrm{and} \qquad \varepsilon_{\circ} \coloneq 10^{-2}.
\end{equation}

In order to exploit the essential support property from Lemma~\ref{lem:essentialsupp}, given a circle $C = C(x,r)$ with $(x, r) \in \R^2 \times (0, \infty)$ and $0 < \delta \leq 1/2$, we define
\begin{equation*}
    C^{\delta,*} = C^{\delta, *}(x,r) \coloneq \big\{ y\in \R^2 : ||x-y|-r| < \delta^{1-\eta_{\circ}} \big\}.
\end{equation*}
Thus, $C^{\delta,*} = C^{\delta^{1-\eta_{\circ}}}$ is a slight enlargement of $C^\delta$.




\subsection{Defining the partition}
\label{sec:partition}
Following \cite[\S5]{Bourgain1986}, we construct $\cC_{\text{trans}}$ using a greedy algorithm, successively removing circles from $\cC$ that form many tangent pairs. In particular, for $\eta_{\circ}$ and $\varepsilon_{\circ}$ as in \eqref{eq: eta eps exp}, we recursively define:
\begin{itemize}
    \item $\cC^{(0)} \coloneq \cC$.
    \item Suppose $\cC^{(0)}, \cC^{(1)}, \dots, \cC^{(J)}$ have been constructed for some $J \in \N_0$.

    \begin{itemize}
        \item If for all $C \in \cC^{(J)}$ the set 
    \[
    \cC^{(J)}_C \coloneq \{ \overline{C} \in \cC^{(J)}: \tang{C}{\overline{C}} \leq \delta^{1 - \eta_{\circ}} \}
    \]
    satisfies $\# \cC^{(J)}_C \leq \delta^{2/3 + 4 \varepsilon_{\circ}} \delta^{-2}$, then the algorithm terminates.

    \item Otherwise, there exists some $C_J \in \cC^{(J)}$ such that
    \[
    \# \cC^{(J)}_{C_J} > \delta^{2/3 +4 \varepsilon_{\circ}}\delta^{-2}
    \]
    and we define $\cC^{(J+1)} \coloneq \cC^{(J)} \setminus \cC^{(J)}_{C_J}$.
    \end{itemize}
\end{itemize}

Clearly, this algorithm terminates after finitely many steps. Let $J \in \N_0$ denote the terminal index, and define 
\begin{equation*}
   \cC_{\text{trans}} \coloneq \cC^{(J)} \qquad \textrm{and} \qquad \cC_{\text{tang}} \coloneq \cC\setminus \cC^{(J)}.
\end{equation*}
In the following subsections, we show \eqref{eq:L2boundeps} and \eqref{eq:L1boundeps} hold for these subfamilies. 




\subsection{Improved $L^2$ bound for transverse circles}
\label{sec:L2trans}

Here we verify the $L^2$ bound \eqref{eq:L2boundeps} for $\cC_\mathrm{trans}$. By the construction described in \S\ref{sec:partition}, we have
\[
\max_{C \in \cC_{\text{trans}}} \# \{ \overline{C} \in \cC_{\text{trans}}: \tang{C}{\overline{C}} \leq \delta^{1 - \eta_{\circ}} \} \leq \delta^{2/3 + 4 \varepsilon_{\circ}} \delta^{-2}.
\]
The desired estimate \eqref{eq:L2boundeps} is therefore a consequence of the following lemma.

\begin{lemma}[Transverse case $\implies L^2$ improvement] \label{lem:transversecase}
    Let $0 < \delta \leq 1/2$ be dyadic and suppose $\cA$ is a $\delta$-separated set of unit scale circles satisfying
    \begin{equation}
    \max_{C \in \cA} \# \cA_C \leq \delta^{2/3 + 4\varepsilon_{\circ}}\delta^{-2} \quad \textrm{where} \quad \cA_C \coloneq \{ \overline{C} \in \cA : \tang{C}{\overline{C} } \leq \delta^{1-\eta_{\circ}} \}.
    \label{eq:transversecasehypothesis}
    \end{equation}
    Then
    \begin{equation}
        \NormLpRd{\sum_{C \in \cA} \Chi_{C^\delta} }{2}{2} \lesssim  \delta^{\varepsilon_{\circ}}[\# \cA]^{1/2}. \label{eq:transversecasebound}
    \end{equation}
    Here $0 < \eta_{\circ} < \varepsilon_{\circ} < 1$ are as in \eqref{eq: eta eps exp}. 
\end{lemma}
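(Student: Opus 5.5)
Expand the square and split the off-diagonal sum according to whether pairs are nearly tangent:
\begin{equation*}
    \NormLpRd{\sum_{C \in \cA} \Chi_{C^\delta}}{2}{2}^2 = \sum_{C_1, C_2 \in \cA} \inn{\Chi_{C_1^\delta}}{\Chi_{C_2^\delta}} = \Sigma_{\mathrm{tang}} + \Sigma_{\mathrm{trans}},
\end{equation*}
where $\Sigma_{\mathrm{tang}}$ collects pairs with $\tang{C_1}{C_2} \leq \delta^{1-\eta_{\circ}}$ (that is, $C_2 \in \cA_{C_1}$) and $\Sigma_{\mathrm{trans}}$ the rest. The goal is $|\Sigma_{\mathrm{tang}}| + |\Sigma_{\mathrm{trans}}| \lesssim \delta^{2\varepsilon_{\circ}} \#\cA$, since $\delta^{2\varepsilon_{\circ}}\#\cA$ is the square of the right-hand side of \eqref{eq:transversecasebound}.

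For $\Sigma_{\mathrm{trans}}$, use Lemma~\ref{lem:weakorthogonality}. Here $\delta^{-1}\tang{C_1}{C_2} \geq \delta^{-\eta_{\circ}}$, so the factor $(1+\delta^{-1}\tang{C_1}{C_2})^{-N}$ is at most $\delta^{\eta_{\circ}N}$. Since $\#\cA \lesssim \delta^{-2}$ by $\delta$-separation, this gives
\begin{equation*}
    |\Sigma_{\mathrm{trans}}| \lesssim \delta \cdot \delta^{\eta_{\circ} N} [\#\cA]^2 \lesssim \delta^{\eta_{\circ}N - 1}\#\cA,
\end{equation*}
which is acceptable once $N \gtrsim \eta_{\circ}^{-1}$.

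For $\Sigma_{\mathrm{tang}}$, fix $C_1$ and bound the sum over $C_2 \in \cA_{C_1}$ of the bound $\delta(1+\delta^{-1}\dist(C_1,C_2))^{-1/2}$ from Lemma~\ref{lem:weakorthogonality} with $d=2$. Decompose $\cA_{C_1}$ dyadically by $\dist(C_1,C_2) \sim 2^{\ell}\delta$. By $\delta$-separation there are at most $\min\{2^{2\ell}, \#\cA_{C_1}\}$ circles at scale $\ell$, and each contributes at most $\delta\, 2^{-\ell/2}$. Write $M \coloneq \delta^{2/3+4\varepsilon_{\circ}}\delta^{-2}$ for the bound on $\#\cA_{C_1}$ from \eqref{eq:transversecasehypothesis}. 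The contribution at scale $\ell$ is then at most $\delta \min\{2^{3\ell/2}, M 2^{-\ell/2}\}$. This is maximised at $2^{2\ell} \sim M$, giving a total of
\begin{equation*}
    \sum_{C_2 \in \cA_{C_1}} |\inn{\Chi_{C_1^\delta}}{\Chi_{C_2^\delta}}| \lesssim \delta M^{3/4} = \delta\cdot \delta^{-1 + 3\varepsilon_{\circ}} = \delta^{3\varepsilon_{\circ}},
\end{equation*}
up to a harmless logarithm from summing over $\ell$. Summing over $C_1 \in \cA$ yields $|\Sigma_{\mathrm{tang}}| \lesssim \delta^{3\varepsilon_{\circ}}\log(\delta^{-1})\,\#\cA \lesssim \delta^{2\varepsilon_{\circ}}\#\cA$.

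The exponent $2/3$ in the hypothesis is exactly what makes $M^{3/4}$ cancel the factor $\delta^{-1}$. The diagonal terms $C_1 = C_2$ cost $\delta$ each by Lemma~\ref{lem: Lp Chi}, which is harmless, and they are in any case already included in $\Sigma_{\mathrm{tang}}$. The only step needing care is this counting: one must use both the separation bound $2^{2\ell}$ and the hypothesis bound $M$ and optimise between them. Using either alone fails: the separation bound alone sums to $\delta\cdot\delta^{-3/2}$, and $M$ alone ignores the decay in $\ell$. Everything else is direct.
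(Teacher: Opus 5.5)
Your proof is correct and follows the paper's argument: you use the same split into $\Sigma_{\mathrm{tang}}$ and $\Sigma_{\mathrm{trans}}$, the same application of Lemma~\ref{lem:weakorthogonality} together with $\#\cA \lesssim \delta^{-2}$ for the transverse pairs, and the same $\delta[\#\cA_{C_1}]^{3/4} \lesssim \delta^{3\varepsilon_{\circ}}$ gain for the tangent pairs. The only difference is cosmetic: the paper gets $[\#\cA_{C_1}]^{3/4}$ from H\"older's inequality with exponents $4/3$ and $4$, combined with $\sum_{C_2}(1+\delta^{-1}\dist(C_1,C_2))^{-2} \lesssim \log\delta^{-1}$, whereas your dyadic optimisation between the separation count and the hypothesis count gives the same bound, and in fact with no logarithmic loss, since the sum is geometric on both sides of the crossover $2^{2\ell} \sim M$.
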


\begin{proof}[Proof (of Lemma~\ref{lem:transversecase})]
Write
    \begin{equation}
        \NormLpRd{\sum_{C \in \cA} \Chi_{C^\delta} }{2}{2}^2 = \sum_{C_1,C_2 \in \cA} \inn{\Chi_{C_1^\delta}}{\Chi_{C_2^\delta}}
        = \Sigma_\mathrm{tang} + \Sigma_{\text{trans}} , \label{eq:transversecasetangtransdecomposition}
    \end{equation}
    where 
    \[
    \Sigma_\mathrm{tang} \coloneq  \sum_{C_1\in\cA} \sum_{C_2\in\cA_{C_1}} \inn{\Chi_{C_1^\delta}}{\Chi_{C_2^\delta}} \quad \text{and} \quad \Sigma_{\text{trans}} \coloneq \sum_{C_1\in\cA} \sum_{C_2\in\cA\setminus \cA_{C_1}} \inn{\Chi_{C_1^\delta}}{\Chi_{C_2^\delta}}.
    \]
    
    We first deal with the transverse contribution $\Sigma_{\text{trans}}$. If $\tang{C_1}{C_2} > \delta^{1-\eta_{\circ}}$, then the weak orthogonality inequality from Lemma~\ref{lem:weakorthogonality} implies
    \begin{equation*}
    |\inn{\Chi_{C_1^\delta}}{\Chi_{C_2^\delta}}| \lesssim_N \delta(1+\delta^{-1} \tang{C_1}{C_2})^{\ceil{N/\eta_{\circ}}} 
    \lesssim \delta^N \quad \text{for all } N \in \N.
    \end{equation*}
    By choosing $N$ sufficiently large, and using $\delta$-separation to bound $\#\cA \lesssim\delta^{-2}$, we have
    \begin{equation}
        \Sigma_\mathrm{trans} \lesssim \delta^{100} \#\cA. \label{eq:transversecasetranscontribution}
    \end{equation}
    
    We now turn to the tangential contribution $\Sigma_\mathrm{tang}$. Again by the weak orthogonality inequality,
    \[
    |\inn{\Chi_{C_1^\delta}}{\Chi_{C_2^\delta}}| \lesssim \delta\big(1+\delta^{-1} \dist(C_1,C_2)\big)^{-1/2}.
    \]
    Thus, by H\"older's inequality,
    \begin{equation}\label{lem:transversecase 1}
        \Sigma_\mathrm{tang} \lesssim \delta \cdot \sum_{C_1\in\cA} [\# \cA_{C_1}]^{3/4} \Brac{\sum_{C_2\in\cA_{C_1}} \big(1+\delta^{-1} \dist(C_1,C_2)\big)^{-2}}^{1/4}.
    \end{equation}
   By a dyadic decomposition argument, the centre separation ensures that
\begin{equation}\label{lem:transversecase 2}
        \sum_{C_2\in\cA} \big(1+\delta^{-1} \dist(C_1,C_2)\big)^{-2} \lesssim \log \delta^{-1}.
    \end{equation}
    Combining \eqref{lem:transversecase 1} and \eqref{lem:transversecase 2} with the hypothesis \eqref{eq:transversecasehypothesis}, we deduce that
    \begin{equation}
        \Sigma_\mathrm{tang} \lesssim \delta \cdot \sum_{C_1\in\cA} \delta^{-(1-3\varepsilon_{\circ})} (\log\delta^{-1})^{1/4} 
        \lesssim \delta^{2\varepsilon_{\circ}} \#\cA . \label{eq:transversecasetangcontribution}
    \end{equation}
    Plugging the bounds \eqref{eq:transversecasetranscontribution} and \eqref{eq:transversecasetangcontribution} into the right-hand side of \eqref{eq:transversecasetangtransdecomposition} yields the desired bound \eqref{eq:transversecasebound}.
\end{proof}

\subsection{Improved $L^1$ bound for tangent circles}
\label{sec:L1tang}

It remains to verify that $\cC_\mathrm{tang}$ satisfies \eqref{eq:L1boundeps}. This is the crux of the proof. 

By the construction described in \S\ref{sec:partition}, there exist circles $C_0, \dots, C_{J-1} \in \cC$ such that $\cC_{\text{tang}}$ may be decomposed as a disjoint union
\begin{align*}
    \cC_{\text{tang}} = \bigcup_{j = 0}^{J-1} \cC^{(j)}_{C_j} \qquad \textrm{where} \qquad \# \cC^{(j)}_{C_j} \geq \delta^{2/3 + 4\varepsilon_{\circ}}\delta^{-2} \qquad \textrm{for $0 \leq j \leq J-1$.}
\end{align*}
In order to prove \eqref{eq:L1boundeps}, it then suffices to show
\begin{equation}\label{eq: tang key}
   \NormLpRd{\sum_{C \in \cC^{(j)}_{C_j}} \Chi_{C^\delta}}{1}{2} \lesssim \delta^{1+\varepsilon_{\circ}} \# \cC^{(j)}_{C_j} \qquad \textrm{for all $0 \leq j \leq J-1$.} 
\end{equation}
The desired bound \eqref{eq:L1boundeps} then follows by summing, using the disjointness of the $\cC^{(j)}_{C_j}$.

The key estimate \eqref{eq: tang key} is a consequence of the following lemma.

\begin{lemma}[Tangent case $\implies L^1$ improvement]
\label{lem:L1tang}
    Let $0 < \delta \leq 1/2$ be dyadic and suppose $\cA$ is a $\delta$-separated family of unit scale circles satisfying:
    \begin{enumerate}[i)]
        \item There exists some $C_0 \in \cA$ such that $\tang{C}{C_0} \leq \delta^{1-\eta_{\circ}}$ for all $C \in \cA$;
        \item $\# \cA \geq \delta^{2/3 + 4\varepsilon_{\circ}} \delta^{-2}$.
    \end{enumerate}   
    Then
    \begin{align*}
        \NormLpRd{\sum_{C \in \cA} \Chi_{C^\delta}}{1}{2} \lesssim \delta^{1+\varepsilon_{\circ}} \# \cA.
    \end{align*}
    Here $0 < \eta_{\circ} < \varepsilon_{\circ} < 1$ are as in \eqref{eq: eta eps exp}. 
\end{lemma}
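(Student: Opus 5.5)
The plan is to exploit the approximate clamshell structure forced by hypothesis i). Whenever the annuli $C^\delta$, $C \in \cA$, overlap heavily, the trivial bound \eqref{eq:L1bound} can be beaten by Cauchy--Schwarz against the $L^2$ bound \eqref{eq:L2bound}, since the support then has small measure. Proposition~\ref{prop:localisedclam} says that the overlap between mutually transverse circles in $\cA$ is concentrated in a thin annulus $C_0^\lambda$, so that is where I would aim the Cauchy--Schwarz argument.

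\emph{Step 1: reductions.} First split $\cA$ into the two subfamilies with $r \geq r_0$ and $r \leq r_0$, so that hypothesis iii) of Proposition~\ref{prop:localisedclam} holds within each. Next decompose dyadically according to $\rho \coloneq \dist(C, C_0)$. Since $\cA$ is $\delta$-separated, the circles with $\rho \leq \delta^{1/3 + 3\varepsilon_\circ}$ number at most $\delta^{-4/3 - 6\varepsilon_\circ} \leq \delta^{2\varepsilon_\circ}\#\cA$, by hypothesis ii). For these the trivial bound from Lemma~\ref{lem: Lp Chi} already gives $\delta^{1+2\varepsilon_\circ}\#\cA$. For the remaining scales, pigeonhole a single dyadic $\rho$ at the cost of a $\log \delta^{-1}$ factor. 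Call the resulting family $\cA_\rho$; it satisfies $\#\cA_\rho \lesssim (\rho/\delta)^2$. Throughout, Schwartz tails are handled with Lemma~\ref{lem:essentialsupp}, replacing $C^\delta$ by $C^{\delta,*}$ at the cost of $\delta^{-O(\eta_\circ)}$ losses.

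\emph{Step 2: splitting space.} Fix $\kappa$, to be chosen, and set $\lambda \coloneq K_\circ \rho\kappa$. On $C_0^{\lambda}$ (slightly enlarged), apply Cauchy--Schwarz together with \eqref{eq:L2bound}:
\begin{equation*}
\Big\|\sum_{C \in \cA_\rho}\Chi_{C^\delta}\Big\|_{L^1(C_0^\lambda)} \lesssim \lambda^{1/2}[\#\cA_\rho]^{1/2}.
\end{equation*}
Off $C_0^\lambda$, expand $\big|\sum \Chi_{C^\delta}\big|^2$ into pairs. By Proposition~\ref{prop:localisedclam} (applied with $\delta^{1-\eta_\circ}$ in place of $\delta$), every pair with $\tang{C_1}{C_2} + \delta^{1-\eta_\circ} \leq \kappa \dist(C_1,C_2)$ has $C_1^{\delta,*}\cap C_2^{\delta,*} \subseteq C_0^\lambda$, so it contributes only tails off $C_0^\lambda$. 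Pairs with $\tang{C_1}{C_2} > \delta^{1-\eta_\circ}$ are negligible by Lemma~\ref{lem:weakorthogonality}. The surviving pairs therefore satisfy $\dist(C_1,C_2) \lesssim \delta^{1-\eta_\circ}/\kappa$. I would bound their total contribution using the intersection bound \eqref{eq: int annuli area} or the weak orthogonality decay, summing over the $\delta$-separated centres. Since all annuli lie in $C_0^{4\rho}$, the outside $L^1$ norm is then at most $|C_0^{4\rho}|^{1/2}$ times the square root of this pair sum.

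\emph{Step 3: the main obstacle.} The crux is balancing $\kappa$ so that both pieces are at most $\delta^{1+\varepsilon_\circ}\#\cA_\rho$, using the lower bound $\#\cA \geq \delta^{-4/3+4\varepsilon_\circ}$. The inside piece requires $\rho\kappa \lesssim \delta^{2+2\varepsilon_\circ}\#\cA_\rho$. The outside piece, if estimated crudely as above, produces a condition which is incompatible with the inside one when $\rho \sim 1$. So the outside estimate must be sharpened. My first attempt would be to replace $|C_0^{4\rho}|$ by the measure of the union of the annuli weighted by multiplicity. That is, I would bound the outside $L^1$ norm directly by $\sum_C |C^{\delta,*} \setminus C_0^\lambda|$, restricted to points lying in few annuli, and use Lemma~\ref{lem:intersectingannuli} b) to show that each $C^\delta$ spends a proportion $\gtrsim \kappa^{1/2}$ of its length inside $C_0^\lambda$. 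If that still fails, I would make a further dyadic decomposition in $\dist(C_1,C_2)$ and in the multiplicity of the overlaps, in the spirit of Bourgain's original argument. I expect this exponent bookkeeping, and identifying which of these two outside estimates actually closes for the choice $2/3 + 4\varepsilon_\circ$, to be the hard part.
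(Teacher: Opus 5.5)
Your skeleton matches the paper: pigeonholing $r\geq r_0$ versus $r\leq r_0$ and the dyadic value of $\dist(C,C_0)$, excising $C_0^\lambda$, using Cauchy--Schwarz on $C_0^{4\rho}$ off it, and killing tangent pairs with Proposition~\ref{prop:localisedclam}. However, the argument does not close, and there are two concrete gaps.

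The first gap is your claim that, off $C_0^\lambda$, pairs with $\tang{C_1}{C_2}>\delta^{1-\eta_{\circ}}$ are negligible by Lemma~\ref{lem:weakorthogonality}. Once you restrict to the complement of $C_0^\lambda$, the pair terms are weighted inner products $\inn{\Chi_{C_1^\delta}}{\Chi_{C_2^\delta}}_{L^2(u)}$, where $u$ is a cutoff varying at scale $\lambda$. Lemma~\ref{lem:weakorthogonality} says nothing about these, because its proof is Plancherel over all of $\R^2$. Inserting the weight convolves the frequency side with $\hat{u}$, which lives at frequencies up to $\lambda^{-1}\delta^{-\eta_{\circ}}$, and this perturbs the phase gradient by $O(\lambda^{-1}\delta^{1-\eta_{\circ}})$. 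So you cannot use the cutoff to make tangent pairs vanish and, at the same time, discard transverse pairs as though the cutoff were absent. What is true is the Claim in Step~7 of the paper, proved by redoing the stationary phase with $\hat{u}$ inserted. It gives negligibility only when $\tang{C_1}{C_2}\gtrsim\lambda^{-1}\delta^{1-\eta_{\circ}}$; in general one has only $|\inn{\Chi_{C_1^\delta}}{\Chi_{C_2^\delta}}_{L^2(u)}|\lesssim\lambda^{-1}\delta^{1-\eta_{\circ}}\tang{C_1}{C_2}^{-1}|C_1^{\delta,*}\cap C_2^{\delta,*}|+\delta^{100}$. Hence transverse pairs with $\tang{C_1}{C_2}$ between roughly $\kappa\dist(C_1,C_2)$ and $\lambda^{-1}\delta^{1-\eta_{\circ}}$ survive. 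They must be summed using \eqref{eq: int annuli area} together with $\tang{C_1}{C_2}>\tfrac{\kappa}{2}\dist(C_1,C_2)$, which gives $\Sigma_{\mathrm{trans}}\lesssim\delta^{1-3\eta_{\circ}}\kappa^{-5/2}\rho^{-1}\log\delta^{-1}\,\#\cA$. The factor $\rho^{-1}$ is exactly offset by the $\rho^{1/2}$ from Cauchy--Schwarz on $C_0^{4\rho}$, and the $\delta^{-O(\varepsilon_{\circ})}$ loss is absorbed by hypothesis ii).

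The second gap is the inside estimate, and it is this piece, not the outside one, that produces the incompatibility you found. Cauchy--Schwarz on $C_0^\lambda$ against \eqref{eq:L2bound} beats $\delta^{1+2\varepsilon_{\circ}}\#\cA$ in general only if $\lambda\lesssim\delta^{2+4\varepsilon_{\circ}}\#\cA$. When $\#\cA\sim\delta^{-4/3+4\varepsilon_{\circ}}$ and $\rho\sim 1$, this forces $\kappa\lesssim\delta^{2/3+8\varepsilon_{\circ}}$, and then the $\sim\kappa^{-2}$ diagonal pairs per circle are unaffordable. The paper instead bounds the inside piece directly in $L^1$ by $\sum_{C}|C^{\delta,*}\cap C_0^{2\lambda}|$ and uses the \emph{upper} bound of Corollary~\ref{cor:intersectingannuli}. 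Each annulus crosses $C_0^{2\lambda}$ only along a short arc, so it meets it in area $\lesssim\delta^{1-\eta_{\circ}}(\lambda/\rho)^{1/2}\sim\delta^{1-\eta_{\circ}}\kappa^{1/2}$. This gains $\kappa^{1/2}$ for every $\kappa$, so $\kappa=\delta^{5\varepsilon_{\circ}}$ suffices. The diagonal term then costs only $\delta^{-11\varepsilon_{\circ}}$, which is absorbed via $\#\cA\leq\delta^{4/3-4\varepsilon_{\circ}}[\#\cA]^2$.

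Your proposed repair points the wrong way. Lemma~\ref{lem:intersectingannuli}~b) is a \emph{lower} bound on the portion of $C^\delta$ inside $C_0^\lambda$; it enters only through the proof of Proposition~\ref{prop:localisedclam}. Trivial area counting off $C_0^\lambda$ gains nothing, since each annulus spends most of its length there. The gain outside must come from the bilinear analysis described above.

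Two smaller slips. The number of circles with $\dist(C,C_0)\leq\delta^{1/3+3\varepsilon_{\circ}}$ is $O(\delta^{-4/3+6\varepsilon_{\circ}})$, not $\delta^{-4/3-6\varepsilon_{\circ}}$, although your conclusion for that range is still correct. Also, each dyadic piece should be bounded by $\delta^{1+2\varepsilon_{\circ}}\#\cA$ rather than in terms of $\#\cA_\rho$, since hypothesis ii) gives no lower bound on $\#\cA_\rho$.
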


The proof of Lemma~\ref{lem:L1tang} is the most involved part of the argument and relies heavily on the geometric lemmas discussed in \S\ref{sec:geometriclemmas}.

\begin{proof}[Proof (of Lemma~\ref{lem:L1tang})]
    We present the argument in steps.\smallskip




    \noindent\textit{Step 1: Dyadic pigeonholing distances}. Write $\delta = 2^{-j}$ for $j \in \N$ and define 
    \begin{align*}
        \cA_0 &\coloneq \{ C\in\cA:\dist(C,C_0) < 2^{-\floor{j/2}} \}, \\
        \cA_\ell &\coloneq \{ C\in\cA:2^{\ell-1-\floor{j/2}} \leq \dist(C,C_0) < 2^{\ell-\floor{j/2}}\} \qquad \textrm{for $1 \leq \ell \leq \floor{j/2}$,}
    \end{align*}
    so we have the partition $\cA = \bigcup_{\ell = 0}^{L(\delta)} \cA_\ell$ for $L(\delta) \coloneq \floor{j/2}$.
    By pigeonholing, it suffices to show
    \begin{equation}
    \NormLpRd{\sum_{C \in \cA_\ell} \Chi_{C^\delta}}{1}{2} \lesssim \delta^{1+2\varepsilon_{\circ}} \# \cA \quad \text{for $0 \leq \ell \leq L(\delta)$. } \label{eq:dyadicsufficestoshow}
    \end{equation}
    By additional pigeonholing, we may also assume either $r \geq r_0$ for all $C(x,r) \in \cA_\ell$ or $r \leq r_0$ for all $C(x,r) \in \cA_\ell$.

    The contribution from $\cA_0$ may be estimated as in \eqref{eq:L1bound}, giving
    \begin{align*}
        \NormLpRd{\sum_{C \in \cA_0} \Chi_{C^\delta}}{1}{2} \leq \sum_{C \in \cA_0} \normLpRd{ \Chi_{C^\delta}}{1}{2} \lesssim \delta \# \cA_0.
    \end{align*}
    Since the circles in $\cA_0$ have $\delta$-separated centres, we have $\#\cA_0 \lesssim \delta^{-1}$. Furthermore, by hypothesis i) of the lemma,
    \begin{equation*}
        \delta^{-1} = \delta^{1/3 - 4 \varepsilon_{\circ}} \delta^{2/3 + 4 \varepsilon_{\circ}} \delta^{-2} \leq \delta^{1/3 - 4 \varepsilon_{\circ}} \#\cA.
    \end{equation*}
    Combining these observations, 
    \begin{equation*}
        \NormLpRd{\sum_{C \in \cA_0} \Chi_{C^\delta}}{1}{2} \lesssim \delta^{1 + 1/3 - 4 \varepsilon_{\circ}} \#\cA \leq \delta^{1 + 2 \varepsilon_{\circ}} \#\cA
    \end{equation*}
    for $\varepsilon_{\circ} \coloneq 10^{-2}$ as in \eqref{eq: eta eps exp}. \smallskip




    \noindent\textit{Step 2: Spatial partition}. Henceforth, we let $1 \leq \ell \leq L(\delta)$ and define 
    \begin{equation}\label{eq: parameters}
        \rho \coloneq 2^{\ell-\floor{j/2}} \sim 2^{\ell} \delta^{1/2}, \qquad  \kappa \coloneq \delta^{5\varepsilon_{\circ}}, \qquad \lambda = \lambda(\rho,\kappa) \coloneq K_{\circ} \rho\kappa,
    \end{equation}
    where the latter is as in Proposition~\ref{prop:localisedclam}.

    Let $\psi_{C_0^\lambda} \in C_c^\infty (\R^2)$ satisfy $0 \leq \psi_{C_0^\lambda}(x) \leq 1$ for all $x \in \R^2$ and
    \begin{equation*}
        \psi_{C_0^\lambda}(x) = 1 \quad \textrm{if $x \in C_0^\lambda$,} \qquad \psi_{C_0^\lambda}(x) = 0 \quad \textrm{if $x \notin C_0^{2\lambda}$,} \qquad \|| \nabla \psi_{C_0^\lambda}|\|_{L^{\infty}(\R^2)} \lesssim \lambda^{-1}.
    \end{equation*}
    Proposition~\ref{prop:localisedclam} implies that for $C_1$, $C_2 \in \cA_\ell$ sufficiently close to tangent, $C_1^\delta \cap C_2^\delta$ is contained in the annulus $C_0^\lambda$. This motivates the decomposition
    \begin{align}
        \NormLpRd{\sum_{C \in \cA_\ell} \Chi_{C^\delta}}{1}{2} \leq \NormLpRd{\sum_{C \in \cA_\ell} \Chi_{C^\delta} \cdot \psi_{C_0^\lambda}}{1}{2} + \NormLpRd{\sum_{C \in \cA_\ell} \Chi_{C^\delta} (1 -\psi_{C_0^\lambda} )}{1}{2}. \label{eq:localpsidecomposition}
    \end{align}




    \noindent\textit{Step 3: The local contribution}. For the first term on the right-hand side of \eqref{eq:localpsidecomposition}, we exploit the localisation to bound the $L^1$ norm directly. The key observation is that for each $C \in \cA_{\ell}$, the annulus $C^{\delta}$ only intersects $C_0^{\lambda}$ along a (fairly) short arc.
    
    By the essential support property from Lemma~\ref{lem:essentialsupp}, observe that
    \begin{equation*}
        \NormLpRd{\sum_{C \in \cA_\ell} \Chi_{C^\delta} \cdot \psi_{C_0^\lambda}}{1}{2} \leq \sum_{C \in \cA_\ell} \norm{ \Chi_{C^\delta} }_{L^1(C_0^{2\lambda})} \lesssim \sum_{C \in \cA_\ell} |C^{\delta,*} \cap C_0^{2\lambda}| + \delta^{100}.
    \end{equation*}
    By Corollary~\ref{cor:intersectingannuli}, we may bound
    \begin{equation*}
        |C^{\delta,*} \cap C_0^{2\lambda}| 
        \lesssim \delta^{1-\eta_{\circ}} \lambda^{1/2}\dist(C,C_0)^{-1/2}
        \lesssim \delta^{1-\eta_{\circ}} \kappa^{1/2}.
    \end{equation*}
    Recalling the definition of $\kappa$ from \eqref{eq: parameters}, this implies that
    \begin{equation}
        \NormLpRd{\sum_{C \in \cA_\ell} \Chi_{C^\delta} \cdot \psi_{C_0^\lambda}}{1}{2} \lesssim \delta^{1-\eta_{\circ}} \delta^{5\varepsilon_{\circ}/2} \# \cA 
        \lesssim \delta^{1+2\varepsilon_{\circ}} \# \cA,  \label{eq:localpsibound}
    \end{equation}
    which provides a favourable bound for this term.\smallskip




    \noindent\textit{Step 4: Non-local contribution: bilinearisation and trichotomy}. We now consider the second term on the right-hand side of \eqref{eq:localpsidecomposition}. Following the observations below the statement of Proposition~\ref{prop:localisedclam}, we have
    \begin{equation*}
        C^{\delta,*} \subseteq C_0^{\,4\rho} \qquad \textrm{for all $C \in \cA_\ell$.}
    \end{equation*}
    Thus, using the essential support property and the Cauchy--Schwarz inequality,
    \begin{equation}
        \NormLpRd{\sum_{C \in \cA_\ell} \Chi_{C^\delta} (1 -\psi_{C_0^\lambda} )}{1}{2} \lesssim \rho^{1/2} \NormLpRd{\sum_{C \in \cA_\ell} \Chi_{C^\delta} (1 -\psi_{C_0^\lambda} )}{2}{2} + \delta^{100}. \label{eq:nonlocall1tol2}
    \end{equation}

    By passing to the $L^2$ norm in \eqref{eq:nonlocall1tol2}, we have effectively `bilinearised' the problem. In particular, we express the $L^2$ norm in terms of a weighted inner product
    \begin{equation}
        \Norm{\sum_{C \in \cA_\ell} \Chi_{C^\delta} (1 -\psi_{C_0^\lambda} )}_{L^2(\R^2)}^2 
        = \sum_{C_1,C_2 \in \cA_\ell} \inn{\Chi_{C_1^\delta}}{\Chi_{C_2^\delta}}_{L^2(u)} \label{eq:innerprodnonlocal}
    \end{equation}
    where
    \begin{align*}
    u \coloneq |(1-\psi_{C_0^\lambda})|^2 \quad \textrm{and} \quad    \inn{f}{g}_{L^2(u)} \coloneq \int_{\R^2} f(x) \overline{g(x)} u(x) \ud x \quad \textrm{for $f$, $g \in L^2(u)$.}
    \end{align*}
    We remark that $0 \leq u \leq 1$ and $\supp u \subseteq \R^2 \setminus C_0^{\lambda}$. Thus, we are interested in studying interactions between pairs $\Chi_{C_1^\delta}$, $\Chi_{C_2^\delta}$, as measured by $\inn{\Chi_{C_1^\delta}}{\Chi_{C_2^\delta}}_{L^2(u)}$.

The remainder of the argument is roughly as follows. We split the right-hand side of \eqref{eq:innerprodnonlocal} into contributions from tangent and transversal pairs.
\begin{itemize}
    \item Suppose $C_1$, $C_2 \in \cA_{\ell}$ are tangent. Proposition~\ref{prop:localisedclam} tells us $C_1^{\delta} \cap C_2^{\delta} \subseteq C_0^{\lambda}$. On the other hand, the cutoff $u$ is supported \textbf{away} from $C_0^{\lambda}$. In particular, $\inn{\Chi_{C_1^\delta}}{\Chi_{C_2^\delta}}_{L^2(u)}$ is negligible in this case, since the functions $\Chi_{C_1^\delta}\overline{\Chi_{C_2^\delta}}$ and $u$ have essentially disjoint support.
    \item Suppose $C_1$, $C_2 \in \cA_{\ell}$ are transversal. As in Lemma~\ref{lem:weakorthogonality}, the oscillatory integral $\inn{\Chi_{C_1^\delta}}{\Chi_{C_2^\delta}}_{L^2(u)}$ enjoys additional cancellation.\footnote{Unfortunately, we cannot apply Lemma~\ref{lem:weakorthogonality} directly here, due to the presence of the cutoff $u$. Nevertheless, an appropriate variant of Lemma~\ref{lem:weakorthogonality} can be applied.} This results in a favourable estimate for the sum over all transversal pairs, by arguing in the same spirit as the proof of Lemma~\ref{lem:transversecase}. 
\end{itemize}

To make the above scheme precise, we perform a trichotomy of the indexing set $\cA_\ell \times \cA_\ell$. For each $C_1 \in \cA_\ell$, we write $\cA_\ell$ as a disjoint union
    \begin{align*}
        \cA_\ell = \cA_\mathrm{diag}(C_1) \cup \cA_\mathrm{tang}(C_1) \cup \cA_\mathrm{trans}(C_1)
    \end{align*}
    where
    \begin{align*}
        \cA_\mathrm{diag}(C_1) &\coloneq \{ C_2 \in \cA_\ell: \dist(C_1,C_2) \leq \tfrac{2}{\kappa} \cdot \delta^{1-\eta_{\circ}}\}, \\
        \cA_\mathrm{tang}(C_1) &\coloneq \{ C_2 \in \cA_\ell: \dist(C_1,C_2) \geq \tfrac{2}{\kappa} \cdot \delta^{1-\eta_{\circ}};\, \tang{C_1}{C_2} \leq \tfrac{\kappa}{2} \cdot \dist(C_1,C_2)\}, \\
        \cA_\mathrm{trans}(C_1) &\coloneq \{ C_2 \in \cA_\ell: \dist(C_1,C_2) \geq \tfrac{2}{\kappa} \cdot \delta^{1-\eta_{\circ}};\, \tang{C_1}{C_2} > \tfrac{\kappa}{2} \cdot \dist(C_1,C_2)\}.
    \end{align*}
    This induces a corresponding decomposition of the right-hand side sum in \eqref{eq:innerprodnonlocal}. We treat each term individually.\smallskip




    \noindent\textit{Step 5: Diagonal term}. By the pointwise bound $0 \leq u \leq 1$ and three applications of the Cauchy--Schwarz inequality,
    \begin{align*}
    \Sigma_{\mathrm{diag}} &\coloneq \sum_{C_1 \in \cA_\ell} \sum_{C_2 \in \cA_\mathrm{diag}(C_1) } |\inn{\Chi_{C_1^\delta}}{\Chi_{C_2^\delta}}_{L^2(u)}| \\
    &\leq \sum_{C_1 \in \cA_\ell} \normLpRd{\Chi_{C_1^\delta}}{2}{2} \sum_{C_2 \in \cA_\mathrm{diag}(C_1) } \normLpRd{\Chi_{C_2^\delta}}{2}{2} \\
    &\leq \max_{C \in \cA_\ell} \# \cA_\mathrm{diag}(C) \sum_{C \in \cA_\ell} \normLpRd{\Chi_{C^\delta}}{2}{2}^2.
    \end{align*}
    Since $\cA_\ell$ is $\delta$-separated, using the definitions in \eqref{eq: parameters}, we have 
    \begin{align*}
        \max_{C \in \cA_\ell} \# \cA_\mathrm{diag}(C) \lesssim (\kappa^{-1}\delta^{1-\eta_{\circ}}/\delta)^{2} \lesssim\delta^{-11\varepsilon_{\circ}}.
    \end{align*}
    Thus, by the $p = 2$ case of Lemma~\ref{lem: Lp Chi}, we get
    \begin{equation}\label{eq: step 5 1}
        \Sigma_{\mathrm{diag}} \lesssim \delta^{-11\varepsilon_{\circ}} \sum_{C \in \cA_\ell} \normLpRd{\Chi_{C^\delta}}{2}{2}^2 
        \lesssim \delta^{1-11\varepsilon_{\circ}} \# \cA.
    \end{equation}
    Recall from hypothesis ii) that $\# \cA \geq \delta^{2/3 + 4\varepsilon_{\circ}} \delta^{-2}$, which implies that
    \begin{equation}
    \# \cA = \frac{1}{\# \cA} [\# \cA]^2 \leq \delta^{4/3 - 4 \varepsilon_{\circ}} [\# \cA]^2. \label{eq:sAsquarebound}
    \end{equation}
    Combining \eqref{eq: step 5 1} and \eqref{eq:sAsquarebound}, we deduce that
    \begin{equation}
        \Sigma_{\text{diag}} \lesssim \delta^{2 + 4\varepsilon_{\circ}} \delta^{1/3 - 19 \varepsilon_{\circ}} [\# \cA]^2
        \lesssim\delta^{2 + 4 \varepsilon_{\circ}} [\# \cA]^2,\label{eq:trichotomynonlocaldiagbound}
    \end{equation}
which provides a favourable estimate for this term. \smallskip



    \noindent\textit{Step 6: Tangent term}. Note that $\supp u \subseteq \R^2 \setminus C_0^\lambda$ and $0 \leq u \leq 1$, so that 
    \begin{align}
        \Sigma_{\text{tang}} &\coloneq \sum_{C_1 \in \cA_\ell} \sum_{C_2 \in \cA_\mathrm{tang}(C_1) } |\inn{\Chi_{C_1^\delta}}{\Chi_{C_2^\delta}}_{L^2(u)}| \nonumber \\
        &\lesssim \sum_{C_1 \in \cA_\ell} \sum_{C_2 \in \cA_\mathrm{tang}(C_1) } |C_1^{\delta,*} \cap C_2^{\delta,*} \cap \R^2 \setminus C_0^\lambda| + \delta^{100} [\# \cA]^2, \label{eq:tangcasegeombound}
    \end{align}
    where we have again used the essential support property from Lemma~\ref{lem:essentialsupp}. Observe that for $C_1 \in \cA_\ell$, $C_2 \in \cA_\mathrm{tang} (C_1)$ we have
    \begin{enumerate}[i)]
        \item$\tang{C_0}{C_1}, \, \tang{C_0}{C_2} \leq \delta^{1-\eta_{\circ}}$ by hypothesis i) of the lemma and
    \begin{align*}
        \tang{C_1}{C_2} + \delta^{1-\eta_{\circ}} \leq \kappa \cdot \dist(C_1, C_2)
    \end{align*}
    by the definition of $\cA_\mathrm{tang}(C_1)$;
        \item $\rho/2 \leq \dist(C_0,C_1), \, \dist(C_0,C_2) \leq \rho$ by the definition of $\cA_\ell$ and \eqref{eq: parameters};

        \item $r_1,r_2  \geq r$ or $r_1,r_2 \leq r$, by our initial pigeonholing.
    \end{enumerate}
     Thus, we may apply Proposition~\ref{prop:localisedclam} to conclude that
    \begin{align*}
        C_1^{\delta,*} \cap C_2^{\delta, *} \subseteq C_0^\lambda.
    \end{align*}
    Thus, all the terms of the sum on the right-hand side of \eqref{eq:tangcasegeombound} are zero and
    \begin{equation}
        \Sigma_{\text{tang}} \lesssim \delta^{100} [\# \cA]^2, \label{eq:trichotomynonlocaltangbound}
    \end{equation}
which is certainly a favourable estimate.\smallskip




    \noindent\textit{Step 7: Transverse term}. Finally, we consider the transverse term
    \begin{equation*}
        \Sigma_{\text{trans}} \coloneq \sum_{C_1 \in \cA_\ell} \sum_{C_2 \in \cA_\mathrm{trans}(C_1) } |\inn{\Chi_{C_1^\delta}}{\Chi_{C_2^\delta}}_{L^2(u)}|. 
    \end{equation*}
    We use the transversality to obtain a good bound for the $|\inn{\Chi_{C_1^\delta}}{\Chi_{C_2^\delta}}_{L^2(u)}|$, as in the transverse case considered in Lemma~\ref{lem:transversecase}. However, we cannot appeal directly to the weak orthogonality lemma here. This is because the inner products are not over the entire space $\R^2$, but a restricted domain due to the cutoff $u$ (which, \textit{a priori}, could potentially be a bad domain that allows little to no cancellation). Nevertheless, a variant of the weak orthogonality lemma does hold. 

    \begin{claim}
        For $C_1 \in \cA_\ell$ and $C_2 \in \cA_\mathrm{trans}(C_1)$, we have
        \begin{equation}
            |\inn{\Chi_{C_1^\delta}}{\Chi_{C_2^\delta}}_{L^2(u)}| \lesssim \lambda^{-1} \frac{\delta^{1-\eta_{\circ}}}{\tang{C_1}{C_2}} |C_1^{\delta,*} \cap C_2^{\delta,*}| + \delta^{100}. \label{eq:trichotomylocaltransinnerbound}
        \end{equation}
    \end{claim}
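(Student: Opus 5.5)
The plan is to prove the Claim by a single integration by parts in physical space, exploiting the fact that when $C_1$, $C_2$ are transverse the two kernels oscillate in different directions on their common support. Set $\Delta \coloneq \tang{C_1}{C_2}$. The interaction is localised to $C_1^{\delta,*}\cap C_2^{\delta,*}$: by Lemma~\ref{lem:essentialsupp}, the portion of the integrand outside this set contributes $O(\delta^{100})$. Near a point $y$ of the intersection, $\Chi_{C_i^\delta}$ behaves like $\delta^{1/2}$ times a wave $e^{\pm 2\pi i \delta^{-1}(|y-x_i|-r_i)}$ times a smooth amplitude. This comes from the stationary phase decomposition \eqref{eq: stationary phase} applied to \eqref{eq: Chi ft}: we write $\Chi_{C_i^\delta} = \sum_{\nu}\Chi^{\nu}_{C_i^\delta}$, where $\Chi^{\nu}_{C_i^\delta}$ has Fourier transform supported where $\xi/|\xi|$ is essentially $\nu$ times the normal direction. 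Concretely, on the annulus we have $\Chi^\nu_{C_i^\delta}(y) = e^{2\pi i \nu\delta^{-1}(|y-x_i|-r_i)} b_i^\nu(y)$, where the amplitudes $b_i^\nu$ satisfy $|\nabla b_i^\nu|\lesssim \delta^{-1}\|b_i^\nu\|_\infty$ in the radial direction but are smooth at scale $\sim 1$ tangentially. It is more convenient to use instead the vector field $\partial_{t}$ along the tangent direction to $C_2$, since $\Chi_{C_2^\delta}$ is essentially constant (non-oscillating) in that direction.

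The main step is to integrate by parts along a vector field $V$ with $V$ tangent to the circles $C(x_2,\cdot)$, i.e.\ $V(y) = (y-x_2)^{\perp}/|y-x_2|$. Then $V\cdot\nabla$ applied to $\overline{\Chi_{C_2^\delta}}$ costs only $O(1)$ (tangential smoothness). Meanwhile the phase of $\Chi_{C_1^\delta}$ has $V$-derivative $\delta^{-1}\,V\cdot\frac{y-x_1}{|y-x_1|} = \delta^{-1}\sin\angle(y-x_1,y-x_2)$. On $C_1^{\delta,*}\cap C_2^{\delta,*}$ this angle is bounded below. The relevant geometric input is the elementary fact behind Lemma~\ref{lem:intersectingannuli}: for points on both thickened circles, the sine of the angle between the normals is $\gtrsim (\Delta\cdot\dist(C_1,C_2))^{1/2}\gtrsim \Delta$ (since $\dist \gtrsim \Delta$ up to $\delta^{1-\eta_\circ}$, and in $\cA_{\mathrm{trans}}$ we have $\Delta \gtrsim \kappa\,\dist \gg \delta^{1-\eta_\circ}$). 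So one integration by parts gains a factor $\delta/\Delta$ times the cost of derivatives hitting the other factors. Those derivatives are: $O(1)$ on $\Chi_{C_2^\delta}$ tangentially; $O(\lambda^{-1})$ on $u$, since $\|\nabla\psi\|\lesssim\lambda^{-1}$; and derivatives of the non-oscillatory amplitude of $\Chi_{C_1^\delta}$ and of $1/(V\cdot\nabla\text{phase})$, which are $O(\Delta^{-1})$ by the same angle bound. Because $\lambda \le \Delta$ is not guaranteed, we keep all terms; the $u$ term dominates in the intended regime, and we simply bound $1+\lambda^{-1}+\Delta^{-1}\lesssim\lambda^{-1}$ using $\lambda\lesssim \rho\lesssim$ any $\Delta$ that is at most $O(1)$. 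After this, we bound the remaining integrand in absolute value by $\|\Chi\|_\infty^2\lesssim 1$ on $C_1^{\delta,*}\cap C_2^{\delta,*}$, giving $\lambda^{-1}\frac{\delta}{\Delta}|C_1^{\delta,*}\cap C_2^{\delta,*}|$. The slack $\delta^{-\eta_\circ}$ in the Claim absorbs losses from working on the enlarged annuli and from Schwartz tails.

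The main obstacle is making the heuristic "$\Chi$ is a wave with slowly varying amplitude" rigorous. I would do this via the Fourier representation \eqref{eq: Chi ft}–\eqref{eq: stationary phase}, writing $\Chi_{C_i^\delta}(y)=\delta\sum_\nu\int e^{2\pi i((y-x_i)\cdot\xi-\nu r_i|\xi|)}a^\nu(r_i\xi)\beta(r_i\delta\xi)\,\ud\xi$. The inner product $\int \Chi_{C_1^\delta}\overline{\Chi_{C_2^\delta}}u$ then becomes an oscillatory integral in $(y,\xi,\zeta)$, and we integrate by parts in $y$ against $e^{2\pi i y\cdot(\xi-\zeta)}$ only in the direction $V$. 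The alternative, cleaner route is to apply the operator $L=\frac{\delta}{2\pi i\, V\cdot\nabla\Phi}V\cdot\nabla$ directly, using the explicit kernel formula $\Chi(y)=\delta^{-1}\int\varphi_r(\delta^{-1}(x-y-r\omega))\,\ud\sigma(\omega)$ to verify the derivative bounds on amplitudes. I would try the Fourier route first, treating each $(\nu_1,\nu_2)$ sign pair separately as in the proof of Lemma~\ref{lem:weakorthogonality}. The opposite-sign pairs are handled by the bound $|\nabla\phi|\gtrsim 1$ and give $O(\delta^{100})$.
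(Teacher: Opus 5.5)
Your route is genuinely different from the paper's: a single integration by parts in physical space along $V=(y-x_2)^{\perp}/|y-x_2|$. As written, though, its central step fails. You model $\Chi^{\nu}_{C_1^\delta}=e^{2\pi i\nu\delta^{-1}(|y-x_1|-r_1)}b_1^{\nu}$, integrate by parts against the phase, and assert that $V$-derivatives of the amplitude $b_1^{\nu}$ cost only $O(\Delta^{-1})$ (with $\Delta=\tang{C_1}{C_2}$).

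You yourself note that $b_1^{\nu}$ varies on scale $\delta$ in the direction normal to the circles centred at $x_1$. Indeed, $\Chi_{C_1^\delta}$ is a single wave packet of width $\delta$ across the annulus, not a wave under a slowly varying envelope. Now $V$ has normal component $V\cdot n_1=\pm\sin\theta$ relative to those circles, where $n_1=(y-x_1)/|y-x_1|$ and $\theta=\angle(y-x_1,y-x_2)$. Hence $V\cdot\nabla b_1^{\nu}$ has size $\delta^{-1}|\sin\theta|\,|b_1^{\nu}|$, which is exactly the size of the $V$-derivative of the phase. The factor $\delta/|\sin\theta|$ you gain is lost immediately, and the integration by parts yields nothing.

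The closing bookkeeping is also false. The bound $1+\lambda^{-1}+\Delta^{-1}\lesssim\lambda^{-1}$ needs $\lambda\lesssim\Delta$, and nothing gives $\rho\lesssim\Delta$. On $\cA_{\mathrm{trans}}(C_1)$ you only know $\Delta>\delta^{1-\eta_{\circ}}$, while $\lambda\gtrsim\kappa\delta^{1/2}$. Even in the range $\Delta\geq K\lambda^{-1}\delta^{1-\eta_{\circ}}$, where the Claim has content, one can have $\Delta\ll\delta^{\eta_{\circ}}\lambda$ (e.g.\ $\rho\sim 1$ and $\tang{C_1}{C_2}\sim\dist(C_1,C_2)\sim\delta^{1/2}$). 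There your bound $\delta\Delta^{-2}|C_1^{\delta,*}\cap C_2^{\delta,*}|$ exceeds the right-hand side of the Claim.

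Your lower bound on $|\sin\theta|$ over all of $C_1^{\delta,*}\cap C_2^{\delta,*}$ also fails when $\Delta<2\delta^{1-\eta_{\circ}}$. The thickened intersection then contains points of the line of centres, where $\sin\theta=0$. So the range $\Delta\lesssim\lambda^{-1}\delta^{1-\eta_{\circ}}$ must first be discarded via the trivial bound $|\inn{\Chi_{C_1^\delta}}{\Chi_{C_2^\delta}}_{L^2(u)}|\lesssim|C_1^{\delta,*}\cap C_2^{\delta,*}|+\delta^{100}$.

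The paper makes exactly this reduction. It then shows that, for $\Delta\geq K\lambda^{-1}\delta^{1-\eta_{\circ}}$, the weighted inner product is in fact $O(\delta^{100})$, and it works on the Fourier side so no pointwise amplitude analysis is needed. The weight $u$ is localised to the at most two squares of side $\lambda$ containing the components of the intersection. The Fourier transforms of these pieces live at $|\zeta|\lesssim\lambda^{-1}\delta^{-\eta_{\circ}}$ up to negligible tails. They therefore perturb the $\xi$-gradient of the phase (of size $\gtrsim\Delta$, as in Lemma~\ref{lem:weakorthogonality}) by at most $\lambda^{-1}\delta^{1-\eta_{\circ}}\leq K^{-1}\Delta$, so non-stationary phase survives.

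If you want to keep a physical-space argument, the missing idea is to drop the phase/amplitude split and use exact radial symmetry. Write $\Chi_{C_i^\delta}(y)=g_i(|y-x_i|)$. Then $V$ is divergence free with $V\cdot\nabla\Chi_{C_2^\delta}\equiv 0$. Moreover $V\cdot\nabla[G_1(|y-x_1|)]=(V\cdot n_1)\,\Chi_{C_1^\delta}$ for an antiderivative $G_1$ of $g_1$, which one checks is $O(\delta)$ and essentially supported in $C_1^{\delta,*}$. One integration by parts, after inserting a cutoff away from the line of centres, places the derivative only on $u$ and on $1/\sin\theta$. Use $|\sin\theta|\gtrsim(\Delta\,\dist(C_1,C_2))^{1/2}$ on the intersection together with $|\nabla\sin\theta|\lesssim\dist(C_1,C_2)$, rather than merely $O(1)$. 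This gives $\delta\bigl(\lambda^{-1}(\Delta\,\dist(C_1,C_2))^{-1/2}+\Delta^{-1}\bigr)|C_1^{\delta,*}\cap C_2^{\delta,*}|$. That is acceptable because $\Delta\lesssim\dist(C_1,C_2)$ and $\lambda\lesssim 1$.
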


     We postpone the proof of the claim until \S\ref{sec:concluding} below. Assuming \eqref{eq:trichotomylocaltransinnerbound} for now, we may apply this bound together with \eqref{eq: int annuli area} to deduce that
    \begin{align*}
        \Sigma_\mathrm{trans} &\lesssim \sum_{C_1 \in \cA_\ell} \sum_{C_2 \in \cA_\mathrm{trans}(C_1) } \lambda^{-1} \frac{\delta^{1-\eta_{\circ}}}{\tang{C_1}{C_2}} 
        |C_1^{\delta,*} \cap C_2^{\delta,*}| 
        + \delta^{100} [\# \cA]^2 \\
        &\lesssim \delta^{3(1-\eta_{\circ})} \lambda^{-1} \sum_{C_1 \in \cA_\ell} \sum_{C_2 \in \cA_\mathrm{trans}(C_1) } \tang{C_1}{C_2}^{-3/2} \dist(C_1,C_2)^{-1/2} + \delta^{100} [\# \cA]^2.
    \end{align*}
    By definition, $ \tang{C_1}{C_2} > (\kappa/2) \cdot \dist(C_1,C_2)$ for all $C_2 \in \cA_\mathrm{trans}(C_1)$, and so
    \begin{align}
        \Sigma_\mathrm{trans} &\lesssim \delta^{1-3\eta_{\circ}} \kappa^{-5/2} \rho^{-1} \sum_{C_1, C_2 \in \cA_\ell} (1+\delta^{-1} \dist (C_1,C_2))^{-2} + \delta^{100} [\# \cA]^2 \nonumber \\
        &\lesssim \delta^{1-3\eta_{\circ}-25\varepsilon_{\circ}/2} \rho^{-1} \log\delta^{-1} \cdot \# \cA + \delta^{100} [\# \cA]^2 \nonumber \\
        &\lesssim \delta^{2+4\varepsilon_{\circ}} \rho^{-1} [\# \cA]^2, \label{eq:transboundfinal}
    \end{align}
    again by \eqref{eq:sAsquarebound} and our choice of $\eta_{\circ} \coloneq 10^{-4}$ and $\varepsilon_{\circ} \coloneq 10^{-2}$ from \eqref{eq: eta eps exp}.\smallskip




    \noindent\textit{Step 8: Concluding the argument}. From \eqref{eq:nonlocall1tol2} we have
    \begin{equation*}
        \NormLpRd{\sum_{C \in \cA_\ell} \Chi_{C^\delta} (1 -\psi_{C_0^\lambda} )}{1}{2} \lesssim \rho^{1/2} \big(\Sigma_\mathrm{diag} + \Sigma_\mathrm{tang} + \Sigma_\mathrm{trans}\big)^{1/2} + \delta^{100}.
    \end{equation*}
    By \eqref{eq:trichotomynonlocaldiagbound}, \eqref{eq:trichotomynonlocaltangbound} and \eqref{eq:transboundfinal}, we have
    \begin{align*}
        \Sigma_\mathrm{diag} + \Sigma_\mathrm{tang} + \Sigma_\mathrm{trans} \lesssim \rho^{-1} \cdot \delta^{2+4\varepsilon_{\circ}} [\#\cA]^2
    \end{align*}
    and so
    \begin{align*}
        \NormLpRd{\sum_{C \in \cA_\ell} \Chi_{C^\delta} (1 -\psi_{C_0^\lambda} )}{1}{2} \lesssim \delta^{1+2\varepsilon_{\circ}} \#\cA.
    \end{align*}
    Plugging this, along with \eqref{eq:localpsibound}, into \eqref{eq:localpsidecomposition}, we obtain the desired bound \eqref{eq:dyadicsufficestoshow}.
\end{proof}




\subsection{A variant of the stationary phase estimate}
\label{sec:concluding}

To conclude, we address the proof of the claim from Step 7. Similar bounds are also studied in \cite[\S3]{Sogge1991}.

\begin{proof}[Proof (of Claim)]
        The argument is based on modifying the proof of the weak orthogonality inequality from Lemma~\ref{lem:weakorthogonality}. First note, by the pointwise bound $0 \leq u \leq 1$ and Lemma~\ref{lem:essentialsupp}, we have
        \begin{equation}\label{eq: step 7 clm 0}
          |\inn{\Chi_{C_1^\delta}}{\Chi_{C_2^\delta}}_{L^2(u)}| \lesssim |C_1^{\delta,*} \cap C_2^{\delta,*}| + \delta^{100}  
        \end{equation}
        and so \eqref{eq:trichotomylocaltransinnerbound} holds trivially if $\lambda^{-1} \delta^{1-\eta_{\circ}} \gtrsim \tang{C_1}{C_2}$.
        Thus, henceforth we assume
        \begin{equation}
            \tang{C_1}{C_2} \geq K \lambda^{-1} \delta^{1-\eta_{\circ}}
            \label{eq: step 7 clm 1}
        \end{equation}
        where $K \geq 1$ is a constant, chosen sufficiently large so as to satisfy the requirements of the forthcoming argument. 
        
        We may further assume that $C_1^{\delta,*} \cap C_2^{\delta,*} \neq \varnothing$, since otherwise the claim holds trivially by \eqref{eq: step 7 clm 0}. Under this hypothesis, it is easy to see that
        \begin{align*}
            |r_1-r_2| \leq |x_1 -x_2| + 2\delta^{1-\eta_{\circ}} \quad \text{ and so } \quad \tang{C_1}{C_2} \lesssim \dist(C_1,C_2) +\delta^{1-\eta_{\circ}}.
        \end{align*}
        Combining this with \eqref{eq: step 7 clm 1}, we have
        \begin{align*}
            \lambda \gtrsim K \cdot \frac{\delta^{1-\eta_{\circ}}}{\tang{C_1}{C_2} ^{1/2}\dist (C_1,C_2)^{1/2}}.
        \end{align*}
        By Lemma~\ref{lem:intersectingannuli} c), provided $K \geq 1$ is chosen sufficiently large, each connected component of $C_1^{\delta,*} \cap C_2^{\delta,*} $ is contained in a square of sidelength~$\lambda$.

        We decompose $u$ into $u_1$ and $u_2$ where each $u_j$ is supported in a square of sidelength $\lambda$ and satisfies
        \begin{equation*}
        |\partial_x^\alpha u_j (x)| \lesssim_{\alpha} \lambda^{-|\alpha|} \quad \text{for all } x \in \R^2, \ \alpha \in \N_0^2
        \end{equation*}
        and
        \begin{equation*}
        \inn{\Chi_{C_1^\delta}}{\Chi_{C_2^\delta}}_{L^2(u)} = \sum_{j=1,2} \inn{\Chi_{C_1^\delta}}{\Chi_{C_2^\delta}}_{L^2(u_j)} + \delta^{100}.
        \end{equation*}
        Using the Fourier multiplication formula, we then write
        \begin{align*}
            \inn{\Chi_{C_1^\delta}}{\Chi_{C_2^\delta}}_{L^2(u_j)} = \int_{\R^2} (\Chi_{C_1^\delta} \cdot \overline{\Chi_{C_2^\delta}})(x) u_j (x) \ud x = \int_{\widehat{\R}^2} (\Chi_{C_1^\delta}) \ft  * \big(\,\overline{\Chi_{C_2^\delta}}\,\big) \ft (\zeta) \cdot \hat{u}_j (\zeta) \ud \zeta
        \end{align*}
        Recalling the Fourier transform formula \eqref{eq: Chi ft}, we have
        \begin{multline*}
            (\Chi_{C_1^\delta}) \ft  * (\overline{\Chi_{C_2^\delta}}) \ft (\zeta) \\
            = \delta^2 \int_{\widehat{\R}^2} e^{-2 \pi i x_1 \cdot (\zeta - \xi )}\beta(r_1 \delta  (\zeta - \xi)) \, \hat{\sigma}(r_1 (\zeta - \xi)) \,e^{-2\pi i x_2 \cdot \xi}   \beta(r_2 \delta  \xi) \,\overline{  \hat{\sigma}(r_2 (-\xi)) } \,   \ud \xi.
        \end{multline*}
        Similarly to the proof of Lemma~\ref{lem:weakorthogonality}, we apply the stationary phase formula \eqref{eq: stationary phase} for $\hat{\sigma}$. In this way,
        \begin{equation*}
          \inn{\Chi_{C_1^\delta}}{\Chi_{C_2^\delta}}_{L^2(u_j)} = \delta^2 \sum_{\bdnu \in \{-1,+1\}^2} I_{C_1,C_2}^{\delta, j, \bdnu}
        \end{equation*}
        where
        \begin{equation*}
            I_{C_1,C_2}^{\delta, j, \bdnu} \coloneq \int_{\widehat{\R}^2} \int_{\widehat{\R}^2} e^{2\pi i \tilde{\phi}_{C_1, C_2}^{\bdnu}(\xi, \zeta)} \tilde{a}_{C_1, C_2}^{\bdnu, \delta}(\xi, \zeta)\ud \xi\,\hat{u}_j(\zeta)\ud \zeta
        \end{equation*}
        for
        \begin{align*}
            \tilde{\phi}_{C_1, C_2}^{\bdnu} (\xi, \zeta) &\coloneq -(x_2-x_1)\cdot  \xi + \nu_1 r_1 |\zeta - \xi| 
            - \nu_2 r_2|\xi| - x_1 \cdot \zeta, \\
            \tilde{a}_{C_1, C_2}^{\bdnu}(\xi, \zeta; R) &\coloneq a^{\nu_1}(r_1(\zeta - \xi))  \,
            \overline{a^{\nu_2}(r_2 \xi)}  \,
            \beta(r_1 R^{-1} (\zeta - \xi)) \,
            \beta(r_2R^{-1} \xi), \quad R \geq 1. 
        \end{align*}
        Here the $a^{\nu}$ for $\nu \in \{-1, +1\}$ are as in \eqref{eq: stationary phase}; in particular, they satisfy \eqref{eq: symbol bounds}. Our goal is now to show
        \begin{equation}\label{eq: step 7 clm 2}
            |I_{C_1,C_2}^{\delta, j, \bdnu}| \lesssim \delta^{100} \qquad \textrm{for $j = 1$, $2$ and $\bdnu \in \{-1, +1\}^2$.}
        \end{equation}
        
        Rescaling and decomposing the $\zeta$-integration, to prove \eqref{eq: step 7 clm 2} for fixed values of $j$ and $\bdnu$, it suffices to show
 \begin{equation}\label{eq: step 7 clm 3}
         \sum_{\ell = 1, 2} \Big|\int_{Z_{\ell}} \int_{\widehat{\R}^2} e^{2\pi i \delta^{-1} \phi_{C_1, C_2}^{\bdnu} (\xi, \zeta; \delta^{-1}, \lambda^{-1})} a_{C_1, C_2}^{\bdnu}(\xi, \zeta; \delta^{-1}, \lambda^{-1}) \ud \xi \,w_j(\zeta; \lambda^{-1})\,\ud \zeta \Big| \lesssim \delta^{100}
        \end{equation}
        where
        \begin{equation*}
            Z_1 \coloneq B(0, \delta^{-\eta_{\circ}}) \qquad \textrm{and} \qquad Z_2 \coloneq \widehat{\R}^2 \setminus B(0, \delta^{-\eta_{\circ}}) 
        \end{equation*}
        and
        \begin{gather*}
            \phi_{C_1, C_2}^{\bdnu} (\xi, \zeta; R, S) \coloneq R^{-1} \tilde{\phi}_{C_1, C_2}^{\bdnu}(R\xi, S \zeta), \;  a_{C_1, C_2}^{\bdnu} (\xi, \zeta; R, S) \coloneq R^2\tilde{a}_{C_1, C_2}^{\bdnu}(R\xi, S\zeta; R), \\
            w_j(\zeta; S) \coloneq S^2 \hat{u}_j (S \zeta), \qquad j = 1,\,2 \textrm{ and } R, S \geq 1.
        \end{gather*}
        Given $R$, $S \geq 1$ and $\zeta \in \widehat{\R}^2$, one may show 
        \begin{equation*}
            \supp a_{C_1, C_2}^{\bdnu}(\,\cdot\,, \zeta; R, S) \subseteq \bbA^2 \coloneq \{\xi \in \widehat{\R}^2 : 1/4 \leq |\xi| \leq 2 \big\} 
        \end{equation*}
        and
        \begin{align}\label{eq: step 7 clm 4a}
            |\partial_{\xi}^{\alpha} a_{C_1, C_2}^{\bdnu}(\xi, \zeta; R, S)| &\lesssim_{\alpha} 1 \qquad \textrm{for all $\alpha \in \N_0^2$;} \\
            \label{eq: step 7 clm 4b}
            |\partial_{\xi}^{\alpha} \phi_{C_1, C_2}^{\bdnu}(\xi, \zeta; R, S)| &\lesssim_{\alpha} 1 \qquad \textrm{for all $\alpha \in \N_0^2 \setminus \{0\}$;} \\
            \label{eq: step 7 clm 4c}
            | w_j(\zeta; \lambda^{-1})| &\lesssim_N ( 1 + |\zeta|)^{-N} \qquad \textrm{for all $N \in \N_0$.}
        \end{align}
            
       It follows from \eqref{eq: step 7 clm 4c} that
        \begin{equation*}
            |w_j(\zeta; \lambda^{-1})| \lesssim \delta^{100}(1 + |\zeta|)^{-10} \qquad \textrm{for all $\zeta \in Z_2$.}
        \end{equation*}
        This immediately implies a favourable bound for the $\ell = 2$ term in \eqref{eq: step 7 clm 3}. 
                
        It remains to bound the $\ell = 1$ term in \eqref{eq: step 7 clm 3}, where the $\zeta$-integration is restricted to $|\zeta| < \delta^{-\eta_{\circ}}$. Note that the $\xi$-gradient of the phase is
        \begin{equation}\label{eq: step 7 clm 5}
         (\nabla_{\xi} \phi_{C_1, C_2}^{\bdnu}) (\xi, \zeta; \delta^{-1}, \lambda^{-1}) = - (x_2-x_1) + (\nu_1 r_1-\nu_2 r_2) \frac{\xi}{|\xi|}  + \nu_1 r_1 \Brac{\frac{\xi - \lambda^{-1} \delta\zeta}{|\xi - \lambda^{-1} \delta\zeta|} - \frac{\xi}{|\xi|}}.
        \end{equation}
        By our hypothesis \eqref{eq: step 7 clm 1}, provided $K \geq 1$ is chosen sufficiently large, 
        \begin{equation}\label{eq: step 7 clm 6}
            \inf_{t \in [0,1]} |\xi - t\lambda^{-1} \delta\zeta| \geq |\xi| - \lambda^{-1} \delta |\zeta| \geq |\xi| - \lambda^{-1} \delta^{1- \eta_{\circ}} \gtrsim 1
        \end{equation}
        for all $(\xi, \zeta) \in \bbA^2 \times Z_1$. Thus, by the mean value theorem (using \eqref{eq: step 7 clm 6} to bound the relevant derivative) and another application of \eqref{eq: step 7 clm 1}, we have 
        \begin{equation}\label{eq: step 7 clm 7}
            \Abs{\frac{\xi - \lambda^{-1} \delta\zeta}{|\xi - \lambda^{-1} \delta\zeta|} - \frac{\xi}{|\xi|}} \lesssim \lambda^{-1} \delta|\zeta| < \lambda^{-1} \delta^{1 - \eta_{\circ}} \leq K^{-1} \tang{C_1}{C_2} 
        \end{equation}
        for all $(\xi, \zeta) \in \bbA^2 \times Z_1$. We now briefly consider two cases, following the dichotomy in the proof of Lemma~\ref{lem:weakorthogonality}.\medskip

        \noindent \underline{Case 1: $\nu_1 \nu_2 = -1$}. Using the hypothesis that $C_1$, $C_2$ are unit scale circles,
        \begin{equation*}
            \inf_{\omega \in S^1}|x_2 - x_1 - (\nu_1r_1-\nu_2r_2) \omega| =\inf_{\omega \in S^1}|x_2 - x_1 - (r_1 + r_2) \omega| \geq 1 \gtrsim \tang{C_1}{C_2}.
        \end{equation*}
        \noindent \underline{Case 2: $\nu_1 \nu_2 = 1$}. Here we have
        \begin{equation*}
            \inf_{\omega \in S^1}|x_2 - x_1 - (\nu_1r_1-\nu_2r_2) \omega| = \inf_{\omega \in S^1}|x_2 - x_1 - (r_1 - r_2)\omega| = \tang{C_1}{C_2}.
        \end{equation*}
            
         In either case, by combining \eqref{eq: step 7 clm 5} and \eqref{eq: step 7 clm 7} with the preceding observations,
        \begin{align}
        \nonumber
          |(\nabla_{\xi} \phi_{C_1, C_2}^{\bdnu}) (\xi, \zeta; \delta^{-1}, \lambda^{-1})| &\geq  \inf_{\omega \in S^1}|x_2 - x_1 - (r_1-r_2) \omega| - O\big(K^{-1} \tang{C_1}{C_2}\big) \\
          \label{eq: step 7 clm 8}
          & \gtrsim \tang{C_1}{C_2}
        \end{align}
 for all $(\xi, \zeta) \in \bbA^2 \times Z_1$,  provided $K \geq 1$ is chosen sufficiently large.
 
        Finally, recalling the hypothesis $C_2 \in \cA_{\mathrm{trans}}(C_1)$, it follows from \eqref{eq: step 7 clm 8} that
        \begin{equation*}
            |(\nabla_{\xi} \phi_{C_1, C_2}^{\bdnu}) (\xi, \zeta; \delta^{-1}, \lambda^{-1})| \gtrsim \kappa \cdot \dist (C_1,C_2) \gtrsim  \delta^{1-\eta_{\circ}}
        \end{equation*}
         for all $(\xi, \zeta) \in \bbA^2 \times  Z_1$.  
        In view of \eqref{eq: step 7 clm 4a} and \eqref{eq: step 7 clm 4b}, repeated integration-by-parts now implies a favourable bound for the $\ell = 1$ term in \eqref{eq: step 7 clm 3}. This completes the proof.
    \end{proof}




\bibliography{Reference}
\bibliographystyle{amsplain}

\end{document}